\newtheorem{Theorem}{Theorem}[section]
\newtheorem{Lemma}[Theorem]{Lemma}
\newtheorem{Proposition}[Theorem]{Proposition}
\newtheorem{Remark}[Theorem]{Remark}
\newtheorem{Definition}[Theorem]{Definition}
\let\epsilon\varepsilon
\let\kappa=\varkappa
\def \a {\alpha}
\def \y {\mathrm{y}}
\def \x {\mathrm{x}}
\def \l {\ell}
\begin{document}

\title[Toroidalization of Locally Toroidal Morphisms]
{Toroidalization of Locally Toroidal Morphisms}

\thanks{This research was supported by a grant from IPM}
\thanks{\textit{Key words and phrases.} Toroidalization, resolution of morphisms, toroidal morphisms}
  \thanks{\textit{2010 MSC.} Primary: 14M99; Secondary: 14B25, 14B05}
  
\address{Razieh Ahmadian, School of Mathematics, Institute for Research in Fundamental Sciences (IPM), P.O.Box 19395-5746, Tehran, Iran.}

\email{ahmadian@ipm.ir}

\author[R.\ Ahmadian]{R.\ Ahmadian}

\maketitle

\begin{abstract}
	\noindent 
	The problem of toroidalization is to construct a toroidal lifting of a dominant morphism $\varphi:X\to Y$ of algebraic varieties by blowing up in the target and domain. This paper contains a solution to this problem when $\varphi$ is locally toroidal.
\end{abstract}

\tableofcontents

\section{Introduction}

Suppose that $\varphi:X\to Y$ is a dominant morphism of algebraic varieties. An interesting question is whether we can construct a commutative diagram 
\begin{equation}\label{generalCD}
\begin{CD}
\widetilde X @>\tilde{\varphi}>> \widetilde Y\\
@ V\lambda VV @VV \pi V\\
X @>\varphi>> Y
\end{CD}
\end{equation}
where $\lambda,\pi$ are proper birational morphisms, and the new morphism $\tilde{\varphi}:\widetilde X\to \widetilde Y$ is rather well-understood, e.g., $\tilde\varphi$ is a locally monomial morphism. As (\ref{generalCD}) gives the factorization $\pi\circ\tilde\varphi\circ\lambda^{-1}$ of $\varphi$, the answer of such a question appears in many applications, most notably in simplifying the problem of strong factorization of morphisms \cite[Question F']{H}.\\

In particular, if we want $\tilde\varphi$ to be toroidal, and $\lambda,\pi$ to be sequences of blow ups of nonsingular centers, we obtain the problem of toroidalization proposed in \cite[problem 6.2.1]{AKMW}. This problem has been addressed in many research articles such as \cite{C,CK,C3,Ha,ADK,C5,A2,...}.  We note that toroidalization does not exist in positive characteristic $p>0$, even for maps of curves, for instance, $y = x^p + x^{p+1}$ \cite{C3}. In addition, this problem can be reduced to the case of morphisms of nonsingular varieties by resolution of singularities in characteristic zero -- see \cite{H}, or any of the simplified proofs including \cite{BM}, \cite{BEV}, and \cite{EH}. When $Y$ is a curve, toroidalization is an immediate consequence of the embedded resolution of hypersurface singularities. We also know a dominant morphism $\varphi:X\to Y$ with $\dim Y\leqslant\dim X\leqslant 3$ can be made toroidal (see \cite{C3} for the full story).\\
 
There are also nice results related to toroidalization in arbitrary dimensions of $X$ and $Y$. The existence of (\ref{generalCD}) has been proved in \cite[Theorem 1.2]{AK} where $\lambda,\pi$ are modifications (arbitrary birational morphism) and $\tilde\varphi$ has a toroidal structure. In \cite[Theorem]{C1}, it is shown that a diagram (\ref{generalCD}) can be constructed where $\lambda,\pi$ are locally products of blowing ups of nonsingular centers and $\tilde\varphi$ is locally toroidal (We note that the morphisms $\lambda,\pi$ and $\tilde\varphi$ may not be separated). The notion of locally toroidal morphisms has been made precise in \cite{Ha}, where a toroidalization of such a morphism from an $n$--fold to a surface has been constructed. In this paper, which is a sequel to \cite{Ha,A1,A2,A3}, we prove the existence of toroidalization for locally toroidal morphisms in arbitrary dimensions of $X$ and $Y$. This case of the toroidalization problem is among patching type problems which appeared in \cite{Z} and \cite{T}.\\

Throughout this paper, $\mathcal K$ is an algebraically closed field of characteristic zero, and a variety is a quasi--projective variety over $\mathcal K$. A normal variety $X$ is \textit{toroidal} if it contains a nonsingular Zariski open subset $U\subset X$ with the property that for each $p\in X$, there exists a neighborhood $U_p$ of $p$, and an affine toric variety $X_{\sigma}$ with an \'{e}tale morphism $\pi:U_p\to X_{\sigma}$ such that $\pi^{-1}(T)=U\cap U_p$ where $T$ is the algebraic torus in $X_{\sigma}$. When $X$ is nonsingular\footnote{Due to the existence of resolution of singularities in characteristic zero \cite{H}, the problem of toroidalization can be reduced to the case of morphisms of nonsingular varieties.}, any simple normal crossings (SNC) divisor $D\subset X$, letting $U=X\setminus D$, specifies a toroidal structure on $X$ -- see Proposition \ref{SNC div Specifies a Tor structure}. A dominant morphism $\varphi:X\to Y$ of nonsingular varieties is \textit{toroidal} if there exist SNC divisors $D\subset X$ and $E\subset Y$ such that $D=\varphi^{-1}(E)$, and $\varphi$ is locally given by monomials in appropriate \'{e}tale local parameters on X. We will supply more technical details and references in the next section, and we just provide the defined of locally toroidal morphisms.

\begin{Definition}\label{Def: Locally Toroidal Morphism}
 	Let $\varphi:X\to Y$ be a dominant morphism of nonsingular varieties. Suppose that there exist finite open covers $\{U_{\a}\}_{\a\in I}$ of $X$ and $\{V_{\a}\}_{\a\in I}$ of $Y$, and SNC divisors $D_{\a}\subset U_{\a}$ and $E_{\a}\subset V_{\a}$ for each $\a\in I$, such that 
 	\begin{enumerate}
 		\item $\varphi_{\a}:=\varphi|_{U_{\a}}:U_{\a}\to V_{\a}$,
 		\item $D_{\a}=\varphi_{\a}^{-1}(E_{\a})$, and
 		\item $\varphi_{\a}:U_{\a}\setminus D_{\a} \to V_{\a}\setminus E_{\a}$ is smooth,
 	\end{enumerate}
 for all $\a\in I$. We say that $\varphi:(X,U_{\a},D_{\a})_{\a\in I}\to (Y,V_{\a},E_{\a})_{\a\in I}$ is \textbf{\textit{locally toroidal}} if for each $\a\in I$, $\varphi_{\a}:U_{\a}\to V_{\a}$ is toroidal with respect to $D_{\a}$ and $E_{\a}$ (c.f. \cite [Definition 1.3]{Ha}).
 \end{Definition}

We will give a characterization of toroidal morphisms in Theorem \ref{TF}, which is a generalization of \cite[Lemma 19.3]{C}, \cite[Lemma 4.2]{CK} and \cite[Lemma 3.4]{A1} to arbitrary dimensions of $X,Y$. Besides using this result, applying some classic resolution theorems, namely embedded resolution of singularities and principalization of ideal sheaves, enable us to develop the strategy used in \cite{Ha,A1,A2} and to construct toroidalization of locally morphisms in arbitrary dimensions. 

\begin{Theorem}\label{Main Theorem}
	Suppose that $\varphi:(X,U_{\a},D_{\a})_{\a\in I}\to (Y,V_{\a},E_{\a})_{\a\in I}$ is a locally toroidal morphism of nonsingular varieties. There exists a commutative diagram
	\begin{equation}\label{Toroidalization of LocTor}
		\begin{CD}
	\widetilde{X}@>\widetilde\varphi>>\widetilde{Y}\\
	@V\lambda VV @VV\pi V\\ X@>>\varphi>Y
	\end{CD}
   \end{equation}
	such that $\lambda:\widetilde{X}\rightarrow X$ and $\pi:\widetilde{Y}\rightarrow Y$ are sequences of blowups with nonsingular centers, $\widetilde{X}$ and $\widetilde{Y}$ are nonsingular, and there exists SNC divisor $\widetilde E$ on $\widetilde{Y}$ such that $\widetilde D:=\widetilde\varphi^{-1}(\widetilde E)$ is a SNC divisor on $\widetilde X$, and $\widetilde\varphi$ is toroidal with respect to $\widetilde E$ and $\widetilde D$.
\end{Theorem}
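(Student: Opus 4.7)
The plan is to construct the diagram in two phases: first, blow up $Y$ and then $X$ to produce global SNC divisors $\widetilde E$ and $\widetilde D=\widetilde\varphi^{-1}(\widetilde E)$; second, invoke the characterization of toroidality in Theorem~\ref{TF} to upgrade local toroidality to global toroidality. The raw material is that on each chart $\varphi_\alpha:U_\alpha\to V_\alpha$ is toroidal with respect to $(D_\alpha,E_\alpha)$; the essential difficulty is that the divisors $D_\alpha,E_\alpha$ are only defined locally and do not a priori assemble into SNC divisors on $X,Y$.

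Concretely, for each $\alpha\in I$ let $\overline{E_\alpha}\subset Y$ and $\overline{D_\alpha}\subset X$ be the Zariski closures of $E_\alpha$ and $D_\alpha$, and let $E=\bigcup_\alpha\overline{E_\alpha}$ as a reduced Weil divisor on $Y$. Embedded resolution of singularities in characteristic zero \cite{H,BM,EH} yields a sequence $\pi:\widetilde Y\to Y$ of blowups with nonsingular centers such that $\widetilde E:=\pi^{-1}(E)_{\mathrm{red}}$ is an SNC divisor on $\widetilde Y$; in particular each proper transform of $\overline{E_\alpha}$ is a union of components of $\widetilde E$. Next, pull back the ideal sheaf $\varphi^{-1}\mathcal{I}_E\cdot\mathcal{O}_X$ and apply principalization of this ideal sheaf \cite{BM,EH}, combined with embedded resolution of $\bigcup_\alpha\overline{D_\alpha}$ and of the exceptional divisors introduced, to produce a sequence $\lambda:\widetilde X\to X$ of blowups with nonsingular centers, factoring through the fiber product $X\times_Y\widetilde Y$. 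This yields the morphism $\widetilde\varphi:\widetilde X\to\widetilde Y$ fitting into the commutative square~\eqref{Toroidalization of LocTor}, with $\widetilde D:=\widetilde\varphi^{-1}(\widetilde E)$ an SNC divisor on $\widetilde X$. To conclude I would apply Theorem~\ref{TF}, which characterizes toroidality by a locally checkable monomial condition generalizing \cite[Lemma~19.3]{C}, \cite[Lemma~4.2]{CK}, and \cite[Lemma~3.4]{A1}: on each restricted morphism $\lambda^{-1}(U_\alpha)\to\pi^{-1}(V_\alpha)$ the criterion should hold with respect to the restrictions of $\widetilde D,\widetilde E$, and since the condition is local, it glues to the global toroidality of $\widetilde\varphi$.

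The crux is that last gluing step. On $\pi^{-1}(V_\alpha)$, the SNC divisor $\widetilde E$ has strictly more components than the total transform of $E_\alpha$, with extras coming from $\overline{E_\beta}$ for $\beta\neq\alpha$ and from exceptional divisors of $\pi$. A priori $\varphi_\alpha$ is toroidal only with respect to the smaller $E_\alpha$, and enlarging the target divisor can destroy the monomial form unless the new components themselves appear as coordinate hyperplanes in some \'{e}tale-local toric chart of $\widetilde\varphi$. The remedy is iterative: use Theorem~\ref{TF} to identify the locus where toroidality fails, principalize the appropriate Jacobian-type ideal sheaf alternately on $\widetilde Y$ and on $\widetilde X$, and repeat. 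Proving termination of this iteration, while keeping all centers nonsingular and the diagram commutative, is the technical heart of the argument and extends the strategy developed in \cite{Ha,A1,A2,A3} for low-dimensional targets to arbitrary dimensions of $X$ and $Y$.
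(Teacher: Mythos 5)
Your high-level plan (build a global SNC divisor on $Y$ by resolving $\sum_\alpha \overline{E_\alpha}$, make $\widetilde\varphi^{-1}(\widetilde E)$ SNC on $\widetilde X$, and then use Theorem~\ref{TF} to upgrade local toroidality to global toroidality) matches the paper's strategy, but there are three genuine gaps in the execution.

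First, the two resolution sequences cannot be decoupled. You propose to carry out all of $\pi:\widetilde Y\to Y$ first, and only then repair $X$ via a map factoring through $X\times_Y\widetilde Y$. The paper instead \emph{interleaves} the sequences: each permissible blowup $\pi_{k+1}:Y_{k+1}\to Y_k$ with center $Z_k$ is followed by a full strong principalization sequence on the source resolving $W_{Z_k}(X_k)$, producing $\lambda_{k+1}:X_{k+1}\to X_k$ and a new locally toroidal $\phi_{k+1}$ before the next $Y$-blowup. This interleaving is not cosmetic. One needs $Z_k\cap V_{k,\alpha}$ to make SNC with $E_{k,\alpha}$ for every $\alpha$ (Theorem~\ref{ERS}), and one needs the nonprincipal locus $W_{Z_k}(X_k)$ to be a SNC subscheme whose components make SNC with $D_{k,\alpha}$ (Proposition~\ref{NP Locus is SNC}); the latter is proved using the fact that $\phi_k$ is locally toroidal \emph{at step $k$}, which in turn is established by Lemma~\ref{tor after principalization} via the quasi-toroidal forms of Lemma~\ref{QTF}. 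If you do all $Y$-blowups first, you lose access to the intermediate locally toroidal maps, and the fiber product $X\times_Y\widetilde Y$ is in general singular and not known to carry any useful toric-local structure.

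Second, the specific ideal sheaf you propose to principalize, $\varphi^{-1}\mathcal I_E\cdot\mathcal O_X$, is already locally principal because $E$ is a divisor; principalizing it does nothing. The objects that genuinely need principalization are the pullbacks $\mathcal I_{Z_k}\mathcal O_{X_k}$ of the \emph{centers} $Z_k$ of the individual blowups, one at a time, since each $Z_k$ has codimension $\geqslant 2$.

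Third, and most importantly, your closing paragraph frames the gluing step as an open-ended iteration whose termination you leave unproved, but in fact no iteration is needed; you are missing the direct argument. Suppose $q=\widetilde\varphi(p)\in\widetilde V_\alpha$ is a $k$-point of $\widetilde E_\alpha$ but an $\ell$-point of $\widetilde E$ with $0<k<\ell$. Theorem~\ref{TF} applied to $\widetilde\varphi_\alpha$ gives permissible coordinates with $y_i=\delta_i x_1^{a_{i1}}\cdots x_n^{a_{in}}$ for $i\leqslant k$ and $y_i=x_{n-k+i}$ for $i>k$. Because $\widetilde E$ is SNC at $q$ and contains $\widetilde E_\alpha$, the extra $\ell-k$ components of $\widetilde E$ at $q$ are defined, after a change of coordinates among $y_{k+1},\dots,y_m$ only, by $\tilde y_{k+1}=\cdots=\tilde y_\ell=0$ with $\tilde y_i=y_i$ for $i\leqslant k$. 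These extra $\tilde y_i$ pull back under $\widetilde\varphi_\alpha$ to power series in $x$'s whose linear parts in $x_{n+1},\dots,x_{n+m-k}$ form an invertible matrix (by the chain rule and $\det J(\tilde{\mathbf y};\mathbf y)\neq 0$), so they form part of a regular system of parameters at $p$ making SNC with $\widetilde D_\alpha$; one reads off a toroidal form for $\widetilde\varphi$ at $p$ with respect to $\widetilde E$ and $\widetilde D$ directly. The new components of $\widetilde E$ ``automatically appear as coordinate hyperplanes'' precisely because the toroidal form at $q$ sends $y$-coordinates transverse to $E_\alpha$ isomorphically onto $x$-coordinates transverse to $D_\alpha$. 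There is no need to identify a failure locus or to re-principalize: one verification at the end suffices.
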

Most of the techniques needed to construct the diagram (\ref{Toroidalization of LocTor}) are provided in section \ref{Resolution Techniques}, and we will prove this theorem, which is a generalization of \cite[Theorem 4.2]{Ha}, \cite[Theorem 3.19]{A2} and \cite[Theorem 6.3]{A1} to arbitrary dimensions of $X$ and $Y$, in the last section. 


\section{Focus on Toroidal Varieties and Toroidal Morphisms}

The idea of \textit{toroidal varieties}, called \textit{toroidal embeddings} originally, was introduced and developed by Kempf, Knudsen, Mumford and Saint-Donat in the Springer lecture notes \textit{Toroidal Embeddings I}, published in 1973 \cite{KKMS}. In this section, we study the structure of nonsingular toroidal varieties and morphisms. We will observe how a simple normal crossings (SNC) divisor specifies a toroidal structure on a nonsingular variety in Proposition \ref{SNC div Specifies a Tor structure}. We will also give a characterization of toroidal morphisms in Theorem \ref{TF}.

\begin{Definition}[Definition 1, page 54 \cite{KKMS}]
	\label{Def:ToroidalVariety}
	Suppose that $X$ is a normal $d$-dimensional $\mathcal K$--variety, and let $U\subset X$ be a nonsingular Zariski open subset. We say $U$ specifies a toroidal structure on $X$, and $X$ is called a \textbf{toroidal variety} if for every closed point $p\in X$ there exist an affine toric variety $X_{\sigma}$ containing a $d$-dimensional torus $T$, a point $p'\in X_{\sigma}$, and an isomorphism of $\mathcal K$--local algebras
	$$
	\iota_p:\hat{\mathcal O}_{X,p}\to\hat{\mathcal O}_{X_{\sigma},p'}
	$$
	such that $\mathcal I_{X\setminus U}\hat{\mathcal O}_{X,p}$ corresponds to $\mathcal I_{X_{\sigma}\setminus T}\hat{\mathcal O}_{X_{\sigma},p'}$  under this isomorphism. The pair $(X_{\sigma},p')$ is called a \textbf{local model} at $p\in X$.
\end{Definition}
From the context of this definition and the fact that $X_{\sigma}\setminus T$ is purely one codimensional, it can be seen that $X\setminus U$ is a purely one codimensional subscheme of $X$ as well. Writing $X\setminus U=\cup_i D_i$ where $D_i$ are irreducible subvarieties of codimension one in $X$, the toroidal structure on $X$ can be equivalently specified by the reduced Weil divisor $\sum_i D_i$ supported in $X\setminus U$. In particular, any simple normal crossings divisor $D_X$ on $X$, letting $U=X\setminus\mathrm{Supp}\,D_X$, specifies a toroidal structure on $X$, usually denoted by $(X,D_X)$ -- see \cite [Definition 4.3, page 21]{C3} where we can also find the definition of toroidal morphisms.\\

Recall that an effective divisor $D=\sum d_iD_i$, where $d_i\in\mathbb N$ and $D_i$ are irreducible codimension one subvarieties of $X$, is \textbf{\textit{simple normal crossings} (SNC)} if at each $p\in X$ there exist regular parameters $x_1,\dots,x_d$ in $\mathcal O_{X,p}$, and natural numbers $a_1,\dots,a_d$ such that 
$$
\mathcal I_{D,p}=x_1^{a_1}\cdots x_d^{a_d}\mathcal O_{X,p}
$$
where $\mathcal I_D\subset\mathcal O_X$ is the ideal sheaf of $\mathrm{Supp}\,D=\bigcup D_i$. Since the multiplicities $d_i$ of the irreducible components $D_i$ do not affect the definition of being SNC, we may replace $D$ with the reduced divisor $D_{\mathrm{red}}=\sum D_i$. If $\varphi:X\to Y$ is a morphism of varieties, and $E$ is a (reduced) Cartier divisor on $Y$, then $\varphi^{-1}(E)$ will denote the reduced divisor $\varphi^*(E)_{\rm red}$ on $X$.

\begin{Definition}\cite [Definition 4.3, page 21]{C3}
	Suppose that $\varphi:(X,D_X)\to(Y,D_Y)$ is a dominant morphism of toroidal varieties with SNC divisors $D_Y$ and $D_X=\varphi^{-1}(D_Y)$. The morphism $\varphi$ is called \textbf{toroidal} (with respect to $D_Y$ and $D_X$) if at all $p\in X$ with $q=\varphi(p)$, there exist local models $(X_{\sigma},p')$ for $X$ at $p$ and $(Y_{\tau},q')$ for $Y$ at $q$, and a toric morphism $\phi:X_{\sigma}\to Y_{\tau}$ such that the following diagram commutes:
	\begin{equation}\label{CD of Local Rings}
	\begin{CD}
	{\hat{\mathcal{O}}_{X,p}}@>\simeq>>\hat{\mathcal{O}}_{X_{\sigma},p'}\\
	@A\hat{\varphi}^{*}AA @AA\hat{\phi}^{*}A\\
	{\hat{\mathcal{O}}_{Y,q}}@>\simeq>>\hat{\mathcal{O}}_{Y_{\tau},q'}.
	\end{CD}
	\end{equation}
\end{Definition}

The first step in studying the toroidal structure determined by a SNC divisor $D_X$ on a nonsingular variety $X$ is to recognize local models at closed points of $X$. For this purpose, we recall some basic tools in toric geometry. We first give a brief introduction to the construction of nonsingular affine toric varieties as we will study toric morphisms between them (equivariant maps that respect the torus actions). Our primary reference is \cite{CLS}.\\ 

 A \textit{\textbf{toric variety}} is a normal variety $\mathfrak X$ that contains an \textit{algebraic torus} $T$ as a Zariski open subset such that the natural action of $T$ on itself extends to an \textit{algebraic action} of $T$ on $\mathfrak X$, i,e., an action $T\times \mathfrak X\to \mathfrak X$ which is given by a morphism of algebraic varieties. In the $d$-dimensional affine space $\mathbb A^d_{\mathcal K}=\mathrm{Spec}\,\mathcal K[u_1,\dots,u_d]$ over $\mathcal K$, the affine variety $G=\mathrm{Spec}\,\mathcal K[u_1,\dots,u_d]_{u_1\cdots u_d}$, i.e., the Zariski open subset given by $u_1\cdots u_d\neq 0$, is an \textit{algebraic group}\footnote{A variety endowed with the structure of a group is called an \textbf{algebraic group} if the maps $\mu:G\times G\to G$, $\mu(x,y)=xy$, and $\iota:G\to G$, $\iota(x)=x^{-1}$, are morphisms of algebraic varieties \cite{Hu}.} under componentwise multiplication. An \textit{\textbf{algebraic torus}} $T$ is an affine variety isomorphic to $G$, where $T$ inherits a group structure from the isomorphism. (We note that the set of $\mathcal K$-\textit{valued points}\footnote{A $\mathcal K$-\textbf{valued point} $p$ in a $\mathcal K$-scheme $U$ is given by a morphism $\mathrm{Spec}\,\mathcal K\to U$.} of $T$ is actually isomorphic to $(\mathcal K^{\times})^d$ where $\mathcal K^{\times}=\mathcal K\setminus\{0\}$ is the multiplicative group of $\mathcal K$). The \textit{characters}\footnote{A \textbf{character} of a group $H$ is a group homomorphism from $H$ to the multiplicative group of a field \cite{}.} $\chi:T\to\mathcal K^{\times}$ of $T$ form a free $\mathbb Z$-module $M$ of rank equal to the dimension of $T$, called the \textit{\textbf{character lattice}} of $T$.\\

Denoting by $N$ the dual lattice $\mathrm{Hom}_{\mathbb Z}\,(M,\mathbb Z)$ of the character lattice $M$ of $T$, when $\mathfrak X$ is affine, there exists a \textit{strongly convex rational polyhedral cone} $\sigma$ in the real vector space $N_{\mathbb R}=N\otimes_{\mathbb Z}\mathbb R$ such that $\mathfrak X=X_{\sigma}=\mathrm{Spec}\,\mathcal K[S_{\sigma}]$ where $\mathcal K[S_{\sigma}]$ is the \textit{semigroup algebra}\footnote{Given a semigroup $(S,\ast)$, the \textbf{semigroup algebra} $\mathcal K[S]$ is the $\mathcal K$-vector space with $S$ as a basis and multiplication determined by $\ast$ in $S$. Precisely, $\mathcal K[S]=\{\sum_{m\in S} \alpha_m m \ |\ \alpha_m\in\mathcal K \mbox{ and }\alpha_m=0 \mbox{ for all but finitely many }m\in S \}$, and $(\sum_{m\in S} \alpha_m m)(\sum_{n\in S} \beta_n n)=\sum_{m,n\in S} \alpha_m\beta_n(m\ast n)$ \cite{Ok}.} of $S_{\sigma}=\sigma^{\vee}\cap M$, and $\sigma^{\vee}$ is the \textit{dual cone} in $M_{\mathbb R}=M\otimes_{\mathbb Z}\mathbb R$ \cite[Theorem 1.3.5]{CLS}. In particular, an affine toric variety $X_{\sigma}$ is nonsingular if and only if the cone $\sigma$ is \textit{regular} \cite[Theorem 1.3.12]{CLS}. We will make this precise bellow.\\

A polyhedral cone $\sigma\subset N_{\mathbb R}$ is called \textit{\textbf{rational}} if $\sigma=\mathrm{Cone}\,(S)=\{\ \sum_{v\in S}a_v v\ |\ a_v\in\mathbb R^{\geqslant 0}\}$ for some finite set $S\subset N$ of vectors in the lattice, and it is \textit{\textbf{regular}} when $S$ can be extended to a $\mathbb Z$-basis of $N$. \textit{\textbf{Strong convexity}} means that $\sigma$ contains no positive-dimensional subspace of $N_{\mathbb R}$, equivalently, the origin $\{0\}$ is a face of it. The \textit{\textbf{dimension}} of $\sigma$ is the dimension of the smallest subspace of $N_{\mathbb R}$ containing $\sigma$, that is the linear subspace $\mathbb R\sigma=\sigma+(-\sigma)$ spanned by $\sigma$. We have a canonical $\mathbb Z$-bilinear paring $\langle,\rangle:M\times N\to\mathbb Z$ which gives a canonical $\mathbb R$-bilinear paring $\langle,\rangle:M_{\mathbb R}\times N_{\mathbb R}\to\mathbb R$ by scalar extension to $\mathbb R$. The \textit{\textbf{dual cone}} of $\sigma$ is
$$
\sigma^{\vee}=\{m\in M_{\mathbb R}| \langle m,v\rangle\geqslant 0 \mbox{ for all } v\in\sigma\}
$$
which is a convex rational polyhedral cone in $M_{\mathbb R}$.\\

As already noted, according to \cite[Theorem 1.3.12]{CLS}, a nonsingular $d$-dimensional affine toric variety $X_{\sigma}=\mathrm{Spec}\,\mathcal K[S_{\sigma}]$ over $\mathcal K$ is constructed from a lattice $N\cong\mathbb Z^d$, and a regular strongly convex rational polyhedral cone $\sigma\subseteq N_{\mathbb R}$. Let $\{e_1,\dots,e_d\}$ and $\{e_1^*,\dots,e_d^*\}$ be $\mathbb Z$-bases of $N$ and $M$ respectively. Since $\sigma$ is regular, writing $\dim\sigma=n\leqslant d$, we may assume
\begin{align*}
\sigma        &=\mathrm{Cone}\,(e_1,\dots,e_n),\mbox{ and hence }\\
\sigma^{\vee} &=\mathrm{Cone}\,(e_1^*,\dots,e_n^*,\pm e_{n+1}^*,\dots,\pm e_d^*).
\end{align*}

Then the additive semigroup $S_{\sigma}=\sigma^{\vee}\cap M$ is generated by\footnote{More generally, any semigroup $S_{\sigma}=\sigma^{\vee}\cap M$ obtained from a rational polyhedral cone $\sigma$ is finitely generated \cite[Proposition 1.2.17 (Gordan's Lemma)]{CLS}.}
$$
\mathfrak{G_{\sigma}}=\{e^*_1,\dots,e^*_n,\pm e^*_{n+1},\dots,\pm e^*_d\}
$$
which is the sub-semigroup of the character lattice $M$, also denoted by $\mathbb N\mathfrak{G_{\sigma}}$. We note that, as a semigroup, $M$ is generated by $\{\pm e^*_1,\dots,\pm e^*_d\}$, and the $\mathcal K$-algebra homomorphism induced by $e_i^*\mapsto u_i$ defines an isomorphism
$$
\mathcal K[M]\cong\mathcal K[u_1^{\pm 1},\dots,u_d^{\pm 1}]=\mathcal K[u_1,\dots,u_d]_{u_1\cdots u_d}
$$
of the semigroup algebra $\mathcal K[M]$ and the ring of Laurent polynomials in $d$ variables. In particular, a lattice point $m=\sum_{i=1}^d a_ie_i^*\in M$, with $a_1,\dots,a_d\in\mathbb Z$, corresponds to the Laurent monomial $u_1^{a_1}\cdots u_d^{a_d}$. In addition, under this isomorphism, the semigroup algebra $\mathcal K[S_{\sigma}]$, which is a subalgebra of $\mathcal K[M]$, corresponds to
\begin{equation}\label{SGAlg}
\mathcal K[S_{\sigma}]\cong\mathcal K[u_1,\dots,u_n,u_{n+1}^{\pm 1},\dots,u_d^{\pm 1}]=\mathcal K[u_1,\dots,u_d]_{u_{n+1}\cdots u_d}.
\end{equation}

In this way the nonsingular affine toric variety $X_{\sigma}=\mathrm{Spec}\,\mathcal K[S_{\sigma}]$, containing the algebraic torus $T=\mathrm{Spec}\,\mathcal K[M]$, is thoroughly recognizable by $d=\dim X_{\sigma}$ and $n=\dim\sigma$.

\subsection{Local Models}
Now suppose $X$ is a nonsingular $\mathcal K$-variety of dimension $d$, and $D_X$ is a reduced SNC divisor on $X$. We notice that $D_X$ classifies closed points of $X$ as follows:
\begin{Definition}\label{per parameters}
	A closed point $p\in X$ is called an \textbf{n-point} for $D_X$ if $p$ lies in exactly $n$ irreducible components of $D_X$. We have $0\leqslant n\leqslant d=\dim X$.
	
	We say that $x_1,\dots,x_d$ are \textbf{(formal) permissible parameters} at $p$ (for $D_X$) if $x_1,\dots,x_d$ are regular parameters in $\hat{\mathcal O}_{X,p}$ and $x_1\cdots x_n=0$ is a (formal) local equation of $D_X$ if $p$ is an $n$-point. We say that permissible parameters $x_1,\dots,x_d$ are \textbf{algebraic} if $x_1,\dots,x_d\in\mathcal O_{X,p}$.
\end{Definition}

Suppose that $p$ is an $n$-point for $D_X$, for some $0\leqslant n\leqslant d$, and $x_1,\dots,x_d\in\hat{\mathcal O}_{X,p}$ are permissible parameters at $p$, i.e., $p$ corresponds to the maximal ideal
$\mathfrak m_p=\langle x_1,\dots,x_d\rangle\subset\hat{\mathcal O}_{X,p}$, and $\mathcal I_{D_X}\hat{\mathcal O}_{X,p}=\langle x_1\cdots x_n\rangle$
where $\mathcal I_{D_X}\subset\mathcal O_X$ is the ideal sheaf of $\mathrm{Supp}\,D_X$.
\begin{Proposition}\label{SNC div Specifies a Tor structure}
	The nonsingular affine toric variety
	$$
	X_{\sigma}=\mathrm{Spec\,}\mathcal K[S_{\sigma}]=\mathrm{Spec\,}\mathcal K[u_1,\dots,u_n,u_{n+1}^{\pm 1},\dots,u_d^{\pm 1}]
	$$
	with the closed point $p'$ corresponding to the maximal ideal
	$$
	\mathfrak m_{p'}=\langle u_1,\dots,u_n,u_{n+1}-\alpha_{n+1},\dots,u_d-\alpha_d\rangle\subset\mathcal K[S_{\sigma}],\ \ \ \alpha_{n+1},\dots,\alpha_d\in\mathcal K\setminus\{0\},
	$$
	is a local model of $X$ at $p$. Furthermore, $X_{\sigma}$ is unique up to isomorphism.
\end{Proposition}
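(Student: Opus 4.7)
The plan is to verify Definition \ref{Def:ToroidalVariety} directly by comparing the completed regular local rings at $p$ and at $p'$. First I would note that $\hat{\mathcal{O}}_{X,p}\cong\mathcal{K}[[x_1,\dots,x_d]]$ by Cohen's structure theorem, since $x_1,\dots,x_d$ are regular parameters in the $d$-dimensional regular local ring $\mathcal{O}_{X,p}$. On the toric side, by (\ref{SGAlg}) the variety $X_\sigma$ is nonsingular of dimension $d$, and the listed generators of $\mathfrak{m}_{p'}$ form a regular system of parameters (using crucially that $\alpha_{n+1},\dots,\alpha_d\in\mathcal{K}\setminus\{0\}$, so the shifts $u_i-\alpha_i$ for $i>n$ make sense in the localization), whence $\hat{\mathcal{O}}_{X_\sigma,p'}\cong\mathcal{K}[[u_1,\dots,u_n,u_{n+1}-\alpha_{n+1},\dots,u_d-\alpha_d]]$.

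Next I would take $\iota_p$ to be the unique continuous $\mathcal{K}$-algebra isomorphism determined by $x_i\mapsto u_i$ for $i\leq n$ and $x_i\mapsto u_i-\alpha_i$ for $i>n$; that this is an isomorphism of $\mathcal{K}$-local algebras is immediate from the power-series descriptions above. For the ideal condition, the torus $T\subset X_\sigma$ is the open locus $u_1\cdots u_d\neq 0$, but by (\ref{SGAlg}) the coordinates $u_{n+1},\dots,u_d$ are already units in $\mathcal{K}[S_\sigma]$, so $X_\sigma\setminus T$ is the reduced Cartier divisor $u_1\cdots u_n=0$ and $\mathcal{I}_{X_\sigma\setminus T}\hat{\mathcal{O}}_{X_\sigma,p'}=\langle u_1\cdots u_n\rangle$, which is exactly the image under $\iota_p$ of $\mathcal{I}_{X\setminus U}\hat{\mathcal{O}}_{X,p}=\langle x_1\cdots x_n\rangle$.

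For uniqueness, both $d=\dim X$ and $n$ (the number of irreducible components of $D_X$ through $p$) are intrinsic to the local data at $p$, and both are recovered from any local model: $d$ as the Krull dimension of $\hat{\mathcal{O}}_{X_\sigma,p'}$, and $n$ as the number of irreducible components of $X_\sigma\setminus T$ passing through $p'$, since any isomorphism required by Definition \ref{Def:ToroidalVariety} matches the divisor ideals. By (\ref{SGAlg}) the pair $(d,n)$ determines $\mathcal{K}[S_\sigma]$ up to $\mathcal{K}$-algebra isomorphism, so $X_\sigma$ is unique up to isomorphism; the freedom in the choice of $\alpha_{n+1},\dots,\alpha_d$ is absorbed by a torus translation of the base point. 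The only genuinely delicate point in the argument is the identification $\mathcal{I}_{X_\sigma\setminus T}=\langle u_1\cdots u_n\rangle$, which hinges on carefully tracking which coordinates were inverted in passing from $\sigma^\vee\cap M$ to $\mathcal{K}[S_\sigma]$; everything else is a bookkeeping exercise with the Cohen structure theorem.
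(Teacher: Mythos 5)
Your proof is correct and follows essentially the same route as the paper's: both identify $\hat{\mathcal O}_{X,p}$ and $\hat{\mathcal O}_{X_\sigma,p'}$ as power series rings in matching regular parameters, send $x_i\mapsto u_i$ for $i\le n$ and $x_i\mapsto u_i-\alpha_i$ for $i>n$, and observe that both boundary ideals become $\langle u_1\cdots u_n\rangle$. The only real difference is that the paper establishes the regular-parameter claim via uniformizing parameters and K\"ahler differentials (citing \cite{C4}) where you invoke Cohen's structure theorem, and the paper dispatches uniqueness with a bare citation of \cite[Theorem 1.3.12]{CLS} where you spell out how $(d,n)$ is recovered from any local model.
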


\begin{proof}
	First we observe that $u_1,\dots,u_d$ are \textit{uniformizing parameters}\footnote{Suppose that $U$ is an open subset of a nonsingular variety $X$. Elements $f_1,\dots,f_n\in\Gamma(U,\mathcal O_X)$ are called \textbf{uniformizing parameters} on $U$ if $df_1,\dots,df_n$ are a free basis of $\Omega_{X}|_U$ \cite[Definition 14.17, page 171]{C4}.} on $X_{\sigma}$ since, denoting by $A$ the polynomial ring $\mathcal K[u_1,\dots,u_d]$, and writing $S=\{(u_{n+1}\cdots u_d)^k\ |\ k\geqslant 0\ \}$, we have $\mathcal K[S_{\sigma}]=S^{-1}A$ and the module of K\"{a}hler derivations $\Omega_{S^{-1}A/\mathcal K}$ is the free $S^{-1}A$-module of rank $d$ generated by $du_1,\dots,du_d$. This follows from the natural isomorphism of $S^{-1}A$-modules $\Omega_{S^{-1}A/\mathcal K}\cong S^{-1}\Omega_{A/\mathcal K}$ \cite[Proposition 16.9, page 394]{Ei}, and the fact that $du_1,\dots,du_d$ are a free basis of $\Omega_{A/\mathcal K}$ \cite[Proposition 16.1, page 385]{Ei}.
	
	We also notice that, given $\alpha_{n+1},\dots,\alpha_d\in\mathcal K^{\times}$, assigning $u_i\mapsto\left\{
	\begin{array}{ll}
	0, & 1\leqslant i\leqslant n \\
	\alpha_i, & n+1\leqslant i\leqslant d
	\end{array}
	\right.$ induces a $\mathcal K$-algebra homomorphism
	$$
	\Gamma:\mathcal O_{X_{\sigma}}=\mathcal K[u_1,\dots,u_n,u_{n+1}^{\pm 1},\dots,u_d^{\pm 1}]\to\mathcal K.
	$$
	Clearly, the $\mathcal K$-valued point $p'$ given by $\Gamma$ corresponds to the maximal ideal
	$$
	\mathfrak m_{p'}=\langle u_1,\dots,u_n,u_{n+1}-\alpha_{n+1},\dots,u_d-\alpha_d\rangle\subset\mathcal K[S_{\sigma}].
	$$
	In addition, given a regular function $f:X_{\sigma}\to\mathcal K$ in $\mathcal O_{X_{\sigma}}$, we have $f(p')=\Gamma(f)$. In particular, we have $u_1(p')=\cdots=u_n(p')=0$, and $u_{i}(p')=\alpha_i$, for $n+1\leqslant i\leqslant d$.
	
	Thus, by \cite[Proposition 14.18, page 171]{C4}, $u_1,\dots,u_n,u_{n+1}-\alpha_{n+1},\dots,u_d-\alpha_d$ are regular parameters in $\mathcal O_{X_{\sigma},p'}$ since $u_1,\dots,u_d$ are uniformizing parameters on $X_{\sigma}$. Furthermore,
	$$
	\hat{\mathcal O}_{X_{\sigma},p'}=\mathcal K[[u_1,\dots,u_n,u_{n+1}-\alpha_{n+1},\dots,u_d-\alpha_d]],
	$$
	by \cite[Proposition 15.13, page 178]{C4}. In addition, the ideal in $\hat{\mathcal O}_{X_{\sigma},p'}$ generated by the ideal of $X_{\sigma}\setminus T=X_{\sigma}\cap \mathbb V(\langle u_1\cdots u_d\rangle)$ is the monomial ideal $\langle u_1\cdots u_n\rangle$, i.e., $\mathcal I_{X_{\sigma}\setminus T}\hat{\mathcal O}_{X_{\sigma},p'}=\langle u_1\cdots u_n\rangle$.
	
	By the following Remark\ref{Remark1}, there exists a unique $\mathcal K$-local algebra homomorphism
	$$\iota_p:\hat{\mathcal O}_{X,p}\to\hat{\mathcal O}_{X_{\sigma},p'}\ \mbox{ such that }\ \iota_p(x_i)=\left\{
	\begin{array}{ll}
	u_i & \ \ \ \ \ \ \ 1\leqslant i\leqslant n, \\
	u_i-\alpha_i & n+1\leqslant i\leqslant d.
	\end{array}
	\right.
	$$
	We see at once that, under this isomorphism, $\mathcal I_{D_X}\hat{\mathcal O}_{X,p}$ corresponds to $\mathcal I_{X_{\sigma}\setminus T}\hat{\mathcal O}_{X_{\sigma},p'}$. Therefore, $(X_{\sigma},p')$ is a local model for $X$ at $p$, and uniqueness follows from \cite[Theorem 1.3.12]{CLS}.
\end{proof}

\begin{Remark}\cite[Exersice 3., page 180]{C4}\label{Remark1}
	A $\mathcal K$-local algebra homomorphism between two power series rings $\iota:\mathcal K[[x_1,\dots,x_m]]\to\mathcal K[[y_1,\dots,y_n]]$ is uniquely determined by a collection of elements $h_1,\dots,h_m$ in the maximal ideal of $\mathcal K[[y_1,\dots,y_n]]$ such that for $1\leqslant i \leqslant m$, $\iota(x_i)=h_i$.
\end{Remark}

Proposition \ref{SNC div Specifies a Tor structure} clarifies how a SNC divisor $D_X$ equips a nonsingular variety $X$ with a toroidal structure. If $\dim X=d$ and $p\in X$ is an $n$-point for $D_X$, $0\leqslant n\leqslant d$, then a local model at $p$ is the $d$-dimensional nonsingular affine toric variety $X_{\sigma}$ coming from a lattice $N\cong\mathbb Z^d$, and a regular strongly convex rational polyhedral cone $\sigma\subseteq N_{\mathbb R}$ of dimension $n$. As mentioned at the conclusion of subsection 1.1, $d$ and $n$ are defining characteristics of $X_{\sigma}$.

\begin{Definition}
	Let $(d,n)\in\mathbb N\times\mathbb N$ with $n\leqslant d$. A \textbf{toric local model of dimension (d,n)} is a nonsingular $d$-dimensional affine toric variety $X_{\sigma}=\mathrm{Spec}\,\mathcal K[S_{\sigma}]$ constructed from a lattice $N\cong\mathbb Z^d$, and a regular strongly convex rational polyhedral cone $\sigma\subseteq N_{\mathbb R}$ of dimension $n$. We will write $\dim X_{\sigma}=(d,n)$.
\end{Definition}

\subsection{Toric Morphisms}
Our next task is to describe toric morphisms of local models precisely. Let $\mathfrak X_1$ and $\mathfrak X_2$ be toric varieties. A morphism $\phi:\mathfrak X_1\to\mathfrak X_2$ of algebraic varieties is \textbf{\textbf{\textit{toric}}} if it maps the torus $T_1\subset \mathfrak X_1$ into $T_2\subset \mathfrak X_2$ and $\phi|_{T_1}$ is a group homomorphism. When we have affine toric varieties $X_{\sigma}=\mathrm{Spec}\,\mathcal K[S_{\sigma}]$ and $Y_{\tau}=\mathrm{Spec}\,\mathcal K[S_{\tau}]$, a morphism $\phi:X_{\sigma}\to Y_{\tau}$ is toric if and only if the corresponding $\mathcal K$-algebra homomorphism $\phi^*:\mathcal K[S_{\tau}]\to\mathcal K[S_{\sigma}]$ of coordinate rings is induced by a semigroup homomorphism $S_{\tau}\to S_{\sigma}$  \cite[Definition 1.3.13 and Proposition 1.3.14]{CLS}.\\

Suppose that $\phi:X_{\sigma}\to Y_{\tau}$ is a dominant toric morphism of local models with $\dim X_{\sigma}=(d,n)$ and $\dim Y_{\tau}=(m,\ell)$. So $\phi^*:\mathcal K[S_{\tau}]\to\mathcal K[S_{\sigma}]$ is induced by a semigroup homomorphism say $\gamma:S_{\tau}\to S_{\sigma}$.
Let $\mathfrak{G_{\tau}}=\{e^*_1,\dots,e^*_{\ell},\pm e^*_{\ell+1},\dots,\pm e^*_m\}$ and $\mathfrak{G_{\sigma}}=\{e^*_1,\dots,e^*_n,\pm e^*_{n+1},\dots,\pm e^*_d\}$ be the minimal generating sets of $S_{\tau}$ and $S_{\sigma}$ respectively. Then for each $e_i^*\in\mathfrak G_{\tau}$, there exist $a_{i1},\dots,a_{in}\in\mathbb N$ and $a_{i(n+1)},\dots,a_{id}\in\mathbb Z$ such that
$$
\gamma(e^*_i)=\sum_{j=1}^{d}a_{ij}e_j^*\in\mathbb N\mathfrak G_{\sigma}.
$$
By using the isomorphism (\ref{SGAlg}), $\gamma$ induces the $\mathcal K$-algebra homomorphism
$$
\phi^*:\mathcal K[v_1,\dots,v_m]_{v_{\ell+1}\cdots v_m}\to \mathcal K[u_1,\dots,u_d]_{u_{n+1}\cdots u_d}
$$
where $\phi^*(v_i)=u_1^{a_{i1}}\cdots u_d^{a_{id}}$, for $1\leqslant i \leqslant m$.\\

We note that $e_{\ell+1}^*,\dots,e_m^*$ are invertible in $S_{\tau}$, so $\gamma(e_i^*)$ are invertible in $S_{\sigma}$, for $\ell+1\leqslant i\leqslant m$, and $\gamma(-e^*_i)=-\gamma(e^*_i)\in S_{\sigma}$. This imposes $-a_{ij}\in\mathbb N$ and hence $a_{ij}=0$, for $\ell+1\leqslant i\leqslant m$ and $1\leqslant j\leqslant n$. Thus $\phi^*:\mathcal K[S_{\tau}]\to\mathcal K[S_{\sigma}]$ is given by a system of monomial equations
\begin{equation}\label{A_1}
\begin{array}{lcccclcl}
\phi^*(v_1)                   & = & u_1^{a_{11}} & \cdots & u_n^{a_{1n}} & u_{n+1}^{a_{1(n+1)}} & \cdots & u_d^{a_{1d}}  \\
\vdots                &   &  &  &  &  &  &  \\
\phi^*(v_{\ell})              & = & u_1^{a_{\ell 1}} & \cdots & u_n^{a_{\ell n}} & u_{n+1}^{a_{\ell(n+1)}} & \cdots & u_d^{a_{\ell d}} \\
\phi^*(v_{\ell+1}) & = &  &  &  & u_{n+1}^{a_{(\ell+1)(n+1)}} & \cdots & u_d^{a_{(\ell+1)d}} \\
\vdots                &   &  &  &  &  &  &  \\
\phi^*(v_m)        & = &  &  &  & u_{n+1}^{a_{m(n+1)}} & \cdots & u_d^{a_{md}}
\end{array}
\end{equation}
where $(a_{ij})_{\substack{1\leqslant i\leqslant\ell \\1\leqslant j\leqslant n}}\in\mathbb N^{\ell\times n}$, and $(a_{ij})_{\substack{1\leqslant i\leqslant m \\ n+1\leqslant j\leqslant d}}\in\mathbb Z^{m\times (d-n)}$. In addition, since $\phi$ is dominant,
\begin{equation}\label{rkCondition}
\mathrm{rank}\,(a_{ij})_{m\times d}=\mathrm{rank}\,\left(
\begin{array}{c|c}
(a_{ij})_{\substack{1\leqslant i\leqslant\ell \\1\leqslant j\leqslant n}} &  \\
\cdots\cdots\cdots\cdots & (a_{ij})_{\substack{1\leqslant i\leqslant m \\ n+1\leqslant j\leqslant d}} \\
\mathrm{O}_{(m-\ell)\times n} &  \\
\end{array}
\right)=m.
\end{equation}
We note that $u_1,\dots,u_d$ and $v_1,\dots,v_m$ are uniformizing parameters on $X_{\sigma}$ and $Y_{\tau}$ respectively.\\

Finally, denoting by $T_1$ and $T_2$ the tori in $X_{\sigma}$ and $Y_{\tau}$ respectively, we have $\phi(T_1)=T_2$ and hence $X_{\sigma}\setminus T_1=\phi^{-1}(Y_{\tau}\setminus T_2)$ since $\phi:X_{\sigma}\to Y_{\tau}$ is a dominant toric morphism. We notice that $Y_{\tau}\setminus T_2$ is a closed subset, and by \cite[Proposition 2.27, page 17]{C4}, $\phi^{-1}(Y_{\tau}\setminus T_2)$ is the zero locus $\mathcal Z(\phi^*(\mathcal I_{Y_{\tau}\setminus T_2}))$ where $\mathcal I_{Y_{\tau}\setminus T_2}=\langle v_1\cdots v_{\ell}\rangle$ is the ideal of $Y_{\tau}\setminus T_2$ in $\mathcal K[S_{\sigma}]$. Thus by using (\ref{A_3}), we have
$$
X_{\sigma}\setminus T_1=\phi^{-1}(Y_{\tau}\setminus T_2)=\mathcal Z(\langle u_1^{\sum_{i=1}^{\ell}a_{i1}} \cdots u_n^{\sum_{i=1}^{\ell}a_{in}}\rangle)
$$
which is true if and only if for all $1\leqslant j \leqslant n$, $\sum_{i=1}^{\ell}a_{ij}\neq 0$ since $\mathcal I_{X_{\sigma}\setminus T_1}=\langle u_1\cdots u_n\rangle$.

\subsection{Toroidal Forms}
We are now ready to prove the following characterization of toroidal morphisms, the main result of this section.

\begin{Theorem}\label{TF}
	Suppose that $\varphi:X\to Y$ is a dominant morphism of nonsingular $\mathcal K$--varieties where $\dim X=d$ and $\dim Y=m$. Further suppose that there is a simple normal crossings (SNC) divisor $D_Y$ on $Y$ such that  $D_X=\varphi^{-1}(D_Y)$ is a SNC divisor on $X$ which contains the non smooth locus of
	the map $\varphi$. 
	Then the morphism $\varphi$ is toroidal if and only if for each $n$--point $p\in D_X$ and $\l$--point $q=\varphi(p)\in D_Y$, there exist permissible parameters\footnote{See Definition \ref{per parameters}.} $\mathbf{x}=(x_1,\dots,x_d)$ at $p$ for $D_X$, and  permissible parameters $\mathbf{y}=(y_1,\dots,y_m)$ at $q$ for $D_Y$, and there exist $(a_{ij})_{\substack{1\leqslant i\leqslant \ell \\ 1\leqslant j\leqslant n}}\in\mathbb N^{\ell\times n}$ such that $\varphi$ is given by a system of equations of the following form:
	\begin{equation}\label{TFeq}
	y_i=\left\{
	\begin{array}{ll}
	x_1^{a_{i1}} \cdots  x_n^{a_{in}}, & 1\leqslant i\leqslant r \\
	x_1^{a_{i1}} \cdots  x_n^{a_{in}}(x_{n-r+i}+\alpha_{n-r+i}), & r < i\leqslant \ell \\
	x_{n-r+i}, & \ell < i\leqslant m
	\end{array}
	\right.
	\end{equation} 
	where $r=\mathrm{rank\,}(a_{ij})_{\substack{1\leqslant i\leqslant \ell \\ 1\leqslant j\leqslant n}}=\mathrm{rank\,}(a_{ij})_{\substack{1\leqslant i\leqslant r \\ 1\leqslant j\leqslant r}}$, and $\alpha_{n+1},\dots,\alpha_{n+\ell-r}\in\mathcal K^{\times}$. In addition, 
	\begin{equation}
	\mbox{for all }j\in [n], \sum_{i=1}^{\ell}a_{ij}> 0,\mbox{ and for all }i\in[\ell], \sum_{j=1}^{n}a_{ij}> 0.
	\end{equation}
\end{Theorem}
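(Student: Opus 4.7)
The theorem is a biconditional; I treat the two directions separately. For the $(\Leftarrow)$ direction, given the equations (\ref{TFeq}), I construct local models via Proposition \ref{SNC div Specifies a Tor structure}: take $(X_{\sigma},p')$ of type $(d,n)$ with unit coordinates $\alpha'_{n-r+i}:=u_{n-r+i}(p')=\alpha_{n-r+i}$ for $r<i\leqslant\ell$ and arbitrary nonzero values at the remaining non-divisor indices, and take $(Y_{\tau},q')$ of type $(m,\ell)$ with $v_i(q')=\alpha'_{n-r+i}$ for $\ell<i\leqslant m$. Define $\phi:X_{\sigma}\to Y_{\tau}$ as the toric morphism whose pullback is
\[
\phi^{*}(v_i)=\begin{cases}u_1^{a_{i1}}\cdots u_n^{a_{in}},&1\leqslant i\leqslant r,\\ u_1^{a_{i1}}\cdots u_n^{a_{in}}\,u_{n-r+i},&r<i\leqslant\ell,\\ u_{n-r+i},&\ell<i\leqslant m.\end{cases}
\]
The rank and positivity hypotheses on the $a_{ij}$ translate into the well-definedness and dominance of $\phi$ (consistent with (\ref{A_1}) and (\ref{rkCondition})). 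A direct check---based on the identity $\iota_p(x_{n-r+i}+\alpha_{n-r+i})=u_{n-r+i}$ forced by the choice of $\alpha'$'s---shows that the square (\ref{CD of Local Rings}) commutes on the generators $y_i$ of $\hat{\mathcal O}_{Y,q}$, and hence throughout by Remark \ref{Remark1}. Therefore $\varphi$ is toroidal.

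For the $(\Rightarrow)$ direction, assume $\varphi$ is toroidal and fix local models $(X_{\sigma},p')$, $(Y_{\tau},q')$ of types $(d,n)$, $(m,\ell)$ together with a toric $\phi$ making (\ref{CD of Local Rings}) commute. By Subsection 1.2, $\phi^{*}$ has the block form (\ref{A_1}) with total exponent matrix of rank $m$. Set $B:=(a_{ij})_{\ell\times n}\in\mathbb N^{\ell\times n}$ and $r:=\mathrm{rank}\,B$; then (\ref{rkCondition}) forces the lower-right block $C:=(a_{ij})_{\ell<i\leqslant m,\,n<j\leqslant d}$ to have full row rank $m-\ell$. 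After permuting $u_1,\dots,u_n$ and $v_1,\dots,v_{\ell}$ (operations preserving the toric and divisor data), I may assume the leading $r\times r$ submatrix of $B$ is invertible. The plan is to normalize $\hat\varphi^{*}$ to (\ref{TFeq}) in three steps. (i) For each $\ell<i\leqslant m$, adopt the unit $\prod_{j>n}u_j^{a_{ij}}$ minus its constant value $\prod_{j>n}(\alpha'_j)^{a_{ij}}$ at $p'$ as a new non-divisor formal parameter; the full row rank of $C$ ensures that these $m-\ell$ elements are $\mathcal K$-linearly independent modulo $\mathfrak m_{p'}^{2}$ and the divisor directions. (ii) For each $r<i\leqslant\ell$, the torus-part factor of $\phi^{*}(v_i)$ is replaced analogously, producing the $(x_{n-r+i}+\alpha_{n-r+i})$ factor of (\ref{TFeq}). (iii) For $1\leqslant i\leqslant r$, substitutions $\bar y_i:=y_i\prod_{i'>\ell}(y_{i'}+\beta_{i'})^{c_{ii'}}$ on the $Y_{\tau}$ side with appropriately chosen $c_{ii'}\in\mathbb Z$ eliminate the torus-part exponents, so that $\bar y_i$ pulls back to a pure monomial in $x_1,\dots,x_n$. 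The positivity conditions for the resulting matrix descend from the corresponding properties of $\phi$ noted at the end of Subsection 1.2 (the column sum from $\phi^{-1}(Y_{\tau}\setminus T_2)=X_{\sigma}\setminus T_1$, and the row sum from $v_i\in\mathfrak m_{q'}$ for $i\leqslant\ell$).

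The main obstacle is the combined normalization, particularly step (iii). One cannot reduce $C$ to $[I_{m-\ell}\mid 0]$ using only $GL_{d-n}(\mathbb Z)\times GL_{m-\ell}(\mathbb Z)$ toric substitutions, since Smith normal form over $\mathbb Z$ produces diagonal invariant factors, not necessarily the identity. The way out is to step outside the toric category and use formal (non-monomial) substitutions, exploiting that $p'$ and $q'$ have nonzero coordinates along their torus directions, so that any unit may be shifted into the maximal ideal by subtracting its base-point value. Verifying that the substitutions in (i)--(iii) yield valid regular systems of parameters reduces, by a Jacobian calculation, to the invertibility of the leading $r\times r$ submatrix of $B$, the full row rank of $C$, and the non-vanishing of all $\alpha'_j$ for $j>n$.
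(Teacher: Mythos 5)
Your $(\Leftarrow)$ direction (which the paper leaves implicit) is a reasonable direct verification, but your plan for $(\Rightarrow)$ has a genuine gap in steps (ii) and (iii). In step (ii) you adopt, as new non-divisor parameters, the torus-part units $\prod_{j>n}u_j^{a_{ij}}$ for $r<i\leqslant\ell$ (centered at $p'$), and in step (i) you do the same for $\ell<i\leqslant m$; for these $m-r$ elements to be independent modulo $\mathfrak m_{p'}^2$ you would need the submatrix $(a_{ij})_{r+1\leqslant i\leqslant m,\,n+1\leqslant j\leqslant d}$ to have rank $m-r$. The full-row-rank condition on $C=(a_{ij})_{\ell<i\leqslant m,\,n<j\leqslant d}$ only controls the bottom $m-\ell$ rows and does \emph{not} force this. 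Concretely, take $d=m=3$, $n=1$, $\ell=2$, so that the exponent matrix is
\[
(a_{ij})=\begin{pmatrix}1&1&0\\ 2&1&1\\ 0&1&1\end{pmatrix},
\]
which has rank $3=m$, satisfies the positivity conditions, and has $r=\mathrm{rank}\,(1,2)^T=1$ with $C=(1,1)$ of full row rank. Here $(a_{ij})_{2\leqslant i\leqslant 3,\,2\leqslant j\leqslant 3}=\left(\begin{smallmatrix}1&1\\1&1\end{smallmatrix}\right)$ has rank $1<m-r=2$, so the units $\prod_{j>1}u_j^{a_{2j}}=\prod_{j>1}u_j^{a_{3j}}=u_2u_3$ are identical and cannot serve as two independent new parameters. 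Step (iii) fails for the same reason: killing the torus exponents of $\phi^*(v_1)=u_1u_2$ by multiplying $v_1$ by powers of the $v_{i'}$ ($i'>\ell$) requires the row $(a_{1j})_{j>n}=(1,0)$ to lie in the row span of $C=(1,1)$, which it does not; this is a dimension obstruction (generically $m-\ell<d-n$), not merely a Smith-normal-form issue as you suggest.

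The missing idea is the paper's \textbf{Step 1}: one must first modify the \emph{domain-side divisor} parameters by setting
\[
\bar u_j = u_j\,(\tilde u_{n+1}+\alpha_{n+1})^{b_{j,n+1}}\cdots(\tilde u_d+\alpha_d)^{b_{jd}},\qquad 1\leqslant j\leqslant r,
\]
where $(b_{jk})\in\mathbb Q^{r\times(d-n)}$ is chosen, using the invertibility of the leading $r\times r$ block of $(a_{ij})_{\ell\times n}$, to annihilate the torus-part exponents of $\phi^*(v_1),\dots,\phi^*(v_r)$. After this substitution the torus-part exponents of the remaining rows become a matrix $(c_{ij})_{r+1\leqslant i\leqslant m,\,n+1\leqslant j\leqslant d}$ that automatically has rank $m-r$ (because the total exponent matrix still has rank $m$), and only then does the shift-to-the-maximal-ideal trick of your steps (i)--(ii) (the paper's Step 2) produce a valid regular system of parameters. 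In short, the normalization has to happen on the $u$-side through the divisor directions $u_1,\dots,u_r$, not on the $v$-side as in your step (iii), and your step (ii) only works after that preliminary normalization.
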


\begin{proof}
	Suppose that dominant morphism $\varphi:(X,D_X)\to (Y,D_Y)$ is toroidal at $p\in D_X$. By definition, there exist local models $(X_{\sigma},p')$ for $X$ at $p$ and $(Y_{\tau},q')$ for $Y$ at $q$, and a toric morphism $\phi:X_{\sigma}\to Y_{\tau}$ such that the following diagram commutes:
	\begin{equation}\label{proofCD}
	\begin{CD}
	{\hat{\mathcal{O}}_{X,p}}@>\iota_p>>\hat{\mathcal{O}}_{X_{\sigma},p'}\\
	@A\hat{\varphi}^{*}AA @AA\hat{\phi}^{*}A\\
	{\hat{\mathcal{O}}_{Y,q}}@>\iota_q>>\hat{\mathcal{O}}_{Y_{\tau},q'}.
	\end{CD}
	\end{equation}
	
	If $p\in D_X$ is an $n$-point, $1\leqslant n\leqslant d$, and $q=\varphi(p)$ is an $\ell$-point for $D_Y$, $1\leqslant\ell\leqslant m$, then by Proposition \ref{SNC div Specifies a Tor structure}, $\dim X_{\sigma}=(d,n)$ and $\dim Y_{\tau}=(m,\ell)$. So there exist uniformizing parameters $u_1,\dots,u_d$ and $v_1,\dots,v_m$ on $X_{\sigma}$ and $Y_{\tau}$ respectively, such that $\mathcal O_{X_{\sigma}}=\mathcal K[u_1,\dots,u_d]_{u_{n+1}\cdots u_d}$ and $\mathcal O_{Y_{\tau}}=\mathcal K[v_1,\dots,v_m]_{v_{\ell+1}\cdots v_m}$, and there exist $(a_{ij})_{\substack{1\leqslant i\leqslant\ell \\1\leqslant j\leqslant n}}\in\mathbb N^{\ell\times n}$ and $(a_{ij})_{\substack{1\leqslant i\leqslant m \\ n+1\leqslant j\leqslant d}}\in\mathbb Z^{m\times (d-n)}$ satisfying the rank condition (\ref{rkCondition}) such that $\phi^*:\mathcal O_{Y_{\tau}}\to\mathcal O_{X_{\sigma}}$ is given by the system of monomial equations (\ref{A_1}).
	
	In addition, by the argument of Proposition \ref{SNC div Specifies a Tor structure}, there exist $\alpha_{n+1},\dots,\alpha_d\in\mathcal K^{\times}$ such that the point $p'\in X_{\sigma}$ corresponds to the maximal ideal $\langle u_1,\dots,u_n,u_{n+1}-\alpha_{n+1},\dots,u_d-\alpha_d\rangle$ of $\mathcal O_{X_{\sigma}}$. For $n+1\leqslant j \leqslant d$, we set $\tilde u_j=u_j-\alpha_j$, and by \cite[Proposition 14.18, page 171]{C4}, we obtain regular system of parameters
	$$
	\mathbf{u}_0=(u_1,\dots,u_n,\tilde u_{n+1},\dots,\tilde u_d)\in\mathcal O_{X_{\sigma},p'}
	$$
	at $p'$ such that, after substituting $\tilde u_{n+1},\dots,\tilde u_d$ for $u_{n+1},\dots,u_n$ in (\ref{A_1}), we have
	\begin{equation}\label{A' 1}
	\begin{array}{lcccclcl}
	\phi^*(v_1)                   & = & u_1^{a_{11}} & \cdots & u_n^{a_{1n}} & (\tilde u_{n+1}+\alpha_{n+1})^{a_{1(n+1)}} & \cdots & (\tilde u_d+\alpha_d)^{a_{1d}}  \\
	\vdots                &   &  &  &  &  &  &  \\
	\phi^*(v_{\ell})              & = & u_1^{a_{\ell 1}} & \cdots & u_n^{a_{\ell n}} & (\tilde u_{n+1}+\alpha_{n+1})^{a_{\ell(n+1)}} & \cdots & (\tilde u_d+\alpha_d)^{a_{\ell d}} \\
	\phi^*(v_{\ell+1}) & = &  &  &  & (\tilde u_{n+1}+\alpha_{n+1})^{a_{(\ell+1)(n+1)}} & \cdots & (\tilde u_d+\alpha_d)^{a_{(\ell+1)d}} \\
	\vdots                &   &  &  &  &  &  &  \\
	\phi^*(v_m)        & = &  &  &  & (\tilde u_{n+1}+\alpha_{n+1})^{a_{m(n+1)}} & \cdots & (\tilde u_d+\alpha_d)^{a_{md}}.
	\end{array}
	\end{equation}
	
	We are going to simplify the system (\ref{A' 1}) by changing parameters as much as possible.
	
	{\textbf{Step 1.}}Suppose that $\mathrm{rank~}(a_{ij})_{\substack{1\leqslant i\leqslant \ell\\1\leqslant j\leqslant n}}=r\leqslant\min\{\ell,n\}$. After possibly permuting variables $u_1,\dots,u_n$ and $v_1,\dots,v_{\ell}$, we may assume
	$\det(a_{ij})_{\substack{1\leqslant i\leqslant r\\1\leqslant j\leqslant r}}\neq 0$. Then there exists an $r\times (d-n)$ matrix $(b_{ij})\in\mathbb Q^{r\times (d-n)}$ such that
	$$
	(a_{ij})_{\substack{1\leqslant i\leqslant r \\ n+1\leqslant j\leqslant d}}-(a_{ij})_{\substack{1\leqslant i\leqslant r\\1\leqslant j\leqslant r}}(b_{ij})_{\substack{1\leqslant i\leqslant r\\ n+1\leqslant j\leqslant d}}=0.
	$$
	
	Thus, for $1\leqslant j\leqslant r $, there exist $\bar u_j\in\hat{\mathcal O}_{X_{\sigma},p'}$, defined by
	$$
	\bar u_j=u_j(\tilde u_{n+1}+\alpha_{n+1})^{b_{j(n+1)}}  \cdots (\tilde u_d+\alpha_d)^{b_{jd}},
	$$
	so that for $1\leqslant k\leqslant n$, we have
	$$
	\frac{\partial\bar u_j}{\partial u_k}=\left\{
	\begin{array}{ll}
	(\tilde u_{n+1}+\alpha_{n+1})^{b_{j(n+1)}}  \cdots (\tilde u_d+\alpha_d)^{b_{jd}} & k=j ; \\
	0 & k\neq j ;
	\end{array}
	\right.
	$$
	and for $n+1\leqslant k\leqslant d$,
	$$\frac{\partial\bar u_j}{\partial \tilde u_k}= b_{jk}u_j (\tilde u_k+\alpha_k)^{-1}(\tilde u_{n+1}+\alpha_{n+1})^{b_{j(n+1)}} \cdots (\tilde u_d+\alpha_d)^{b_{jd}}.$$
	
	So, writing $\mathbf{u}_1=(\bar u_1,\dots,\bar u_r,u_{r+1},\dots,u_n,\tilde u_{n+1},\dots,\tilde u_d)$, and denoting the Jacobian matrix
	$$
	J(\bar u_1,\dots,\bar u_r,u_{r+1},\dots,u_n,\tilde u_{n+1},\dots,\tilde u_d;u_1,\dots,u_n,\tilde u_{n+1},\dots,\tilde u_d)
	$$
	by $J(\mathbf{u}_1;\mathbf{u}_0)$, we have
	\begin{align*}
	\det J(\mathbf{u}_1;\mathbf{u}_0)&= \det \left(
	\begin{array}{c|c|c}
	\left(\frac{\partial\bar u_j}{\partial u_k}\right)_{\substack{1\leqslant j\leqslant r \\ 1\leqslant k\leqslant r}} & \mathrm{O} & \left(\frac{\partial\bar u_j}{\partial \tilde u_k}\right)_{\substack{1\leqslant j\leqslant r \\ n+1\leqslant k\leqslant d}} \\
	\hdotsfor[.5]{3}\\
	\mathrm{O} & I_{n-r} & \mathrm{O} \\
	\hdotsfor[.5]{3}\\
	\mathrm{O} & \mathrm{O} & I_{d-n} \\
	\end{array}
	\right)\\
	&= (\tilde u_{n+1}+\alpha_{n+1})^{\sum_{i=1}^r b_{i(n+1)}}\cdots (\tilde u_d+\alpha_d)^{\sum_{i=1}^r b_{id}}
	\end{align*}
	which is a unit in $\hat{\mathcal O}_{X_{\sigma},p'}$. Therefore, $\mathbf{u}_1=(\bar u_1,\dots,\bar u_r,u_{r+1},\dots,u_n,\tilde u_{n+1},\dots,\tilde u_d)$
	is a regular system of parameters in $\hat{\mathcal O}_{X_{\sigma},p'}$.
	Substituting $\bar u_1,\dots,\bar u_r$ for $u_1,\dots,u_r$ in (\ref{A' 1}), we obtain
		\begin{equation}\label{A_2}
	\phi^*(v_i)=\left\{
	\begin{array}{ll}
	\prod_{j=1}^r\bar u_j^{a_{ij}} \times \prod_{j=r+1}^n u_j^{a_{ij}}, & 1\leqslant i\leqslant r \\
	\prod_{j=1}^r\bar u_j^{a_{ij}} \times \prod_{j=r+1}^n u_j^{a_{ij}} \times \prod_{j=n+1}^d(\tilde u_j+\alpha_j)^{c_{ij}} , & r < i\leqslant \ell \\
	\prod_{j=n+1}^d(\tilde u_j+\alpha_j)^{c_{ij}}, & \ell < i\leqslant m
	\end{array}
	\right.
	\end{equation} 
	where
	$$
	(c_{ij})_{\substack{r+1\leqslant i\leqslant m \\ n+1\leqslant j\leqslant d}}=(a_{ij})_{\substack{r+1\leqslant i\leqslant m \\ n+1\leqslant j\leqslant d}}-
	\left(
	\begin{array}{c}
	(a_{ij})_{\substack{r+1\leqslant i\leqslant \ell \\1\leqslant j\leqslant r}} \\
	\hdotsfor[.5]{1}\\
	\textrm{O}_{(m-\ell)\times r} \\
	\end{array}
	\right)(b_{ij})_{\substack{1\leqslant i\leqslant r\\ n+1\leqslant j\leqslant d}}.
	$$
	We note that $\phi^*(v_{\ell+1}),\dots,\phi^*(v_m)$ have, in fact, remained unchanged. In addition,
	$$
	\mathrm{rank\,}\left(
	\begin{array}{c|c}
	(a_{ij})_{\substack{1\leqslant i\leqslant r\\1\leqslant j\leqslant n}} & \mathrm{O}_{r\times (n-d)} \\
	\hdotsfor[.5]{2}\\
	(a_{ij})_{\substack{r+1\leqslant i\leqslant\ell \\1\leqslant j\leqslant n}} &  \\
	\hdotsfor[.5]{1} & (c_{ij})_{\substack{r+1\leqslant i\leqslant m \\ n+1\leqslant j\leqslant d}} \\
	\mathrm{O}_{(m-\ell)\times n} &  \\
	\end{array}
	\right)=\mathrm{rank\,}(a_{ij})_{\substack{1\leqslant i\leqslant m \\ 1\leqslant j\leqslant d}}=m,
	$$
	since this matrix is obtained from $(a_{ij})_{\substack{1\leqslant i\leqslant m \\ 1\leqslant j\leqslant d}}$ by performing elementary column operations. Hence we must have $\mathrm{rank~}(c_{ij})_{\substack{r+1\leqslant i\leqslant m \\ n+1\leqslant j\leqslant d}}=m-r$ because $\mathrm{rank~}(a_{ij})_{\substack{1\leqslant i\leqslant \ell\\1\leqslant j\leqslant n}}=\mathrm{rank~}(a_{ij})_{\substack{1\leqslant i\leqslant r\\1\leqslant j\leqslant r}}=r$.
	
	{\textbf{Step 2.}} For $1\leqslant s \leqslant m-r$, we set
	\begin{align}\label{step2}
	\bar \alpha_{n+s} &=\alpha_{n+1}^{c_{(r+s)(n+1)}}\cdots \alpha_d^{c_{(r+s)d}},\\
	\bar u_{n+s}      &=(\tilde u_{n+1}+\alpha_{n+1})^{c_{(r+s)(n+1)}}\cdots (\tilde u_d+\alpha_d)^{c_{(r+s)d}}- \bar \alpha_{n+s},
	\end{align}
	and hence we have
	$$
	\begin{array}{ll}
	\frac{\partial\bar u_{n+s}}{\partial\bar u_j}=0 , & \mbox{ for }1\leqslant j\leqslant r, \\
	\frac{\partial\bar u_{n+s}}{\partial u_j}    =0 , & \mbox{ for }r+1\leqslant j\leqslant n, \\
	\frac{\partial\bar u_{n+s}}{\partial \tilde u_j}=c_{(r+s)j}(\tilde u_j+\alpha_j)^{-1}(\tilde u_{n+1}+\alpha_{n+1})^{c_{(r+s)(n+1)}} \cdots (\tilde u_d+\alpha_d)^{c_{(r+s)d}}, & \mbox{ for }n+1\leqslant j\leqslant d.
	\end{array}
	$$
	We observe that $\left(\frac{\partial\bar u_{n+s}}{\partial \tilde u_{j}}\right)_{\substack{1\leqslant s\leqslant m-r \\ n+1\leqslant j\leqslant d}}=M_1(c_{ij})_{\substack{r+1\leqslant i\leqslant m \\ n+1\leqslant j\leqslant d}} M_2$ where
	\begin{align*}
	M_1 &=\left(\begin{array}{ccc}
	\prod_{j=n+1}^{d} (\tilde u_j+\alpha_j)^{c_{(r+1)j}}&        & \mathrm{O} \\
	& \ddots &  \\
	\mathrm{O} &        &\prod_{j=n+1}^{d} (\tilde u_j+\alpha_j)^{c_{mj}} \\
	\end{array}\right)_{(m-r)\times(m-r)} \mbox{ and }\\
	M_2 &=\left(\begin{array}{ccc}
	(\tilde u_{n+1}+\alpha_{n+1})^{-1} &        & \mathrm{O} \\
	& \ddots &  \\
	\mathrm{O} &        & (\tilde u_d+\alpha_d)^{-1} \\
	\end{array}\right)_{(d-n)\times (d-n)}.
	\end{align*}
	So $\mathrm{rank\,}\left(\frac{\partial\bar u_{n+s}}{\partial \tilde u_j}\right)_{\substack{1\leqslant s\leqslant m-r \\ n+1\leqslant j\leqslant d}}=\mathrm{rank\,}(c_{ij})_{\substack{r+1\leqslant i\leqslant m \\ n+1\leqslant j\leqslant d}}=m-r$. In particular, after possibly permuting variables $\tilde u_{n+1},\dots,\tilde u_d$, we may assume
	$$
	\mathrm{rank\,}(c_{ij})_{\substack{r+1\leqslant i\leqslant m \\ n+1\leqslant j\leqslant d}}=\mathrm{rank~}(c_{ij})_{\substack{r+1\leqslant i\leqslant m \\ n+1\leqslant j\leqslant n+m-r}}=m-r,
	$$
	and hence
	$$
	\mathrm{rank\,}\left(\frac{\partial\bar u_{n+s}}{\partial \tilde u_j}\right)_{\substack{1\leqslant s\leqslant m-r \\ n+1\leqslant j\leqslant d}}=\mathrm{rank\,}\left(\frac{\partial\bar u_{n+s}}{\partial \tilde u_j}\right)_{\substack{1\leqslant s\leqslant m-r \\ n+1\leqslant j\leqslant n+m-r}}=m-r.
	$$
	Thus, writing
	$$
	\mathbf{u}_2=(\bar u_1,\dots,\bar u_r,u_{r+1},\dots,u_n,\bar u_{n+1},\dots,\bar u_{n+m-r},\tilde u_{n+m-r+1},\dots,\tilde u_d),
	$$
	the Jacobian matrix
	$J(\mathbf{u}_2;\mathbf{u}_1)$,
	which is precisely
	$$
	J(\mathbf{u}_2;\mathbf{u}_1)=\left(
	\begin{array}{c|c|c}
	I_n & \mathrm{O} & \mathrm{O} \\
	\hdotsfor[.5]{3}\\
	\mathrm{O} & \left(\frac{\partial\bar u_{n+s}}{\partial\bar u_j}\right)_{\substack{1\leqslant s\leqslant m-r \\ n+1\leqslant j\leqslant n+m-r}} &
	\left(\frac{\partial\bar u_{n+s}}{\partial\tilde u_j}\right)_{\substack{1\leqslant s\leqslant m-r \\ n+m-r+1\leqslant j\leqslant d}} \\
	\hdotsfor[.5]{3}\\
	\mathrm{O} & \mathrm{O} & I_{d-(n+m-r)} \\
	\end{array}
	\right),$$
	and after performing elementary row operations becomes
	$$J(\mathbf{u}_2;\mathbf{u}_1)\leftrightarrow\left(
	\begin{array}{c|c|c}
	I_n & \mathrm{O} & \mathrm{O} \\
	\hdotsfor[.5]{3}\\
	\mathrm{O} & \left(\frac{\partial\bar u_{n+s}}{\partial\bar u_j}\right)_{\substack{1\leqslant s\leqslant m-r \\ n+1\leqslant j\leqslant n+m-r}} & \mathrm{O} \\
	\hdotsfor[.5]{3}\\
	\mathrm{O} & \mathrm{O} & I_{d-(n+m-r)} \\
	\end{array}
	\right),
	$$
	has rank $d$. Therefore, $\mathbf{u}_2$ is a regular system of parameters in $\hat{\mathcal O}_{X_{\sigma},p'}$.
	We note $\bar\alpha_{n+1},\dots,\bar\alpha_{n+m-r}$, defined in (\ref{step2}), belong to $\mathcal K^{\times}$, i.e., are nonzero constant. Substituting $\bar u_{n+1},\dots,\bar u_{n+m-r}$ and $\bar\alpha_{n+1},\dots,\bar\alpha_{n+m-r}$ in (\ref{A_2}), we obtain
	\begin{equation}\label{A_3}
	\phi^*(v_i)=\left\{
	\begin{array}{ll}
	\prod_{j=1}^r\bar u_j^{a_{ij}} \times \prod_{j=r+1}^n u_j^{a_{ij}}, & 1\leqslant i\leqslant r \\
	\prod_{j=1}^r\bar u_j^{a_{ij}} \times \prod_{j=r+1}^n u_j^{a_{ij}} \times (\bar u_{n-r+i}+\bar\alpha_{n-r+i}) , & r < i\leqslant \ell \\
	\bar u_{n-r+i}+\bar\alpha_{n-r+i}, & \ell < i\leqslant m.
	\end{array}
	\right.
	\end{equation} 
	Finally, if, for $\ell+1 \leqslant i \leqslant m$, we set $\bar v_i=v_i-\bar\alpha_{n-r+i}$, we obtain regular system of parameters
	$$
	\mathbf{v}_0=(v_1,\dots,v_{\ell},\bar v_{\ell+1},\dots,\bar v_m)\in \mathcal O_{Y_{\tau},q'}
	$$
	at $q'=\phi(p')$ such that
	\begin{equation}\label{A_4} 
	\left\{
	\begin{array}{ll}
	\phi^*(v_i)=\prod_{j=1}^r\bar u_j^{a_{ij}} \times \prod_{j=r+1}^n u_j^{a_{ij}}, & 1\leqslant i\leqslant r \\
	\phi^*(v_i)=\prod_{j=1}^r\bar u_j^{a_{ij}} \times \prod_{j=r+1}^n u_j^{a_{ij}} \times (\bar u_{n-r+i}+\bar\alpha_{n-r+i}) , & r < i\leqslant \ell \\
	\phi^*(\bar v_i)=\bar u_{n-r+i} , & \ell < i\leqslant m.
	\end{array}
	\right.
	\end{equation}
	
	Clearly, regular systems of parameters $\mathbf{x}=(x_1,\dots,x_d)$ and $\mathbf{y}=(y_1,\dots,y_m)$, defined by
	$$
	x_j:=\left\{
	\begin{array}{ll}
	\iota_p^{-1}(\bar u_j), & \hbox{ for } 1 \leqslant j \leqslant r, \\
	\iota_p^{-1}(u_j), & \hbox{ for } r+1 \leqslant j \leqslant n, \\
	\iota_p^{-1}(\bar u_j), & \hbox{ for } n+1 \leqslant j \leqslant n+m-r, \\
	\iota_p^{-1}(\tilde u_j), & \hbox{ for } n+m-r+1 \leqslant j \leqslant d,
	\end{array}
	\right.
	\mbox{ and }
	y_i:=\left\{
	\begin{array}{ll}
	\iota_q^{-1}(v_i), & \hbox{ for } 1 \leqslant i \leqslant \ell, \\
	\iota_q^{-1}(\bar v_i), & \hbox{ for } \ell+1 \leqslant i \leqslant m, \\
	\end{array}
	\right.
	$$
	are permissible parameters at $p$ and $q=\varphi(p)$ respectively, such that the conclusion holds.
\end{proof}

\section{Resolution Techniques}\label{Resolution Techniques}
In this section we study some resolution tools that we need to prove 
toroidalization of locally toroidal morphisms. Given a locally toroidal morphism $\varphi:(X,U_{\a},D_{\a})_{\a\in I}\to (Y,V_{\a},E_{\a})_{\a\in I}$, first we need to construct a SNC divisor as a toroidal structure on $Y$ containing all $E_{\a}$. This can be done using the algorithm of embedded resolution of singularities. We also need to apply a specific algorithm for principalization of an ideal sheaves. Among all the proofs of principalization and resolution in characteristic zero such as \cite{BM, BEV, EH, EV, V, W}, which are simplifications of the Hironaka's original proofs \cite{H}, we will use the algorithm of \cite{BEV,EV}. This is explained nicely in the book by Cutkosky \cite{C2} which is our primary reference.\\

We start by recalling the construction of blowing-up and other preliminaries we need to understand the embedded resolution algorithm. We then prove in Theorem \ref{ERS} some geometric facts which are essential in constructing toroidalization of locally toroidal morphisms. We also review some properties of the strong principalization algorithm \cite{C2}[Theorem 6.35], and we will study the effect of a principalization sequence on toroidal forms in Lemma \ref{QTF}.

\subsection{Blowing-up and other preliminaries}
\subsubsection{Blowing-up}
Suppose that $X$ is a nonsingular $d$--dimensional variety. Let $\pi:\widetilde X \to X$ be the blowup of $X$ along a nonsingular irreducible subvariety $T\subset X$ with $\mathrm{codim}_XT=c$. There exist regular parameters $x_1,\dots,x_d\in\mathcal O_{X,p}$ at $p\in T$, and an affine open neighborhood $U$ of $p$ in $X$ such that $x_1=\cdots=x_d=0$ are local equations of $p$ in $U$, and $x_1=\cdots=x_c=0$ are local equations of $T$ in $U$, i.e., $\mathfrak m_p=(x_1,\dots,x_d)$ is the maximal ideal of $R=\Gamma(U,\mathcal O_X)$, and $I=\Gamma(U,\mathcal I_T)=(x_1,\dots,x_c)$. Then $\pi^{-1}(U)$ has the affine cover
$$
\pi^{-1}(U)=\mathrm{Proj}\ (\bigoplus_{n=0}^{\infty} I^n)=\bigcup_{i=1}^c \mathrm{Spec}\ R[\frac{x_1}{x_i},\dots,\frac{x_c}{x_i}].
$$	
Suppose that $\tilde p\in\pi^{-1}(p)$. Then $\tilde p$ lies in $\mathrm{Spec}\ R[\frac{x_1}{x_i},\dots,\frac{x_c}{x_i}]$ for some $1\leqslant i \leqslant c$.
In \cite [Theorem 10.19]{C4}, it is shown that there exist $\alpha_1,\dots,\alpha_c\in\mathcal K$ such that
$$
\frac{x_1}{x_i}-\alpha_1,\dots,\widehat{\frac{x_i}{x_i}-\alpha_{i}},\dots,\frac{x_c}{x_i}-\alpha_c,x_i,x_{c+1},\dots,x_d
$$
generate the maximal ideal $\mathfrak m_{\tilde p}$ of $\mathcal O_{\widetilde X,\tilde p}$\ , and $\dim\mathfrak m_{\tilde p}/\mathfrak m_{\tilde p}^2=\dim\mathcal O_{\widetilde X,\tilde p}=d$. (Here the hat denotes as usual an element omitted from the list). Thus $\mathcal O_{\widetilde X,\tilde p}$ is a regular local ring. Setting
$$
\tilde x_1=\frac{x_1}{x_i}-\alpha_1,\dots,\tilde x_i=x_i,\dots,\tilde x_c=\frac{x_c}{x_i}-\alpha_r,\tilde x_{c+1}=x_{c+1},\dots,\tilde x_d=x_d,
$$
we obtain regular parameters $\tilde x_1,\dots,\tilde x_d\in\mathcal O_{\widetilde X,\tilde p}$ such that the point $\tilde p$ can be described by the equations
\begin{equation}\label{blup eqs}
x_j=\left\{
\begin{array}{ll}
\tilde x_i(\tilde x_j+\alpha_j) & \hbox{for $1\leqslant j\leqslant c$ and $j\neq i$;} \\
\tilde x_i & \hbox{for $j=i$;} \\
\tilde x_j & \hbox{for $j>c$.}
\end{array}
\right.
\end{equation}
Furthermore, we have that
$$
\mathcal I_T\mathcal O_{\widetilde X,\tilde p}=(x_1,\dots,x_c)\mathcal O_{\widetilde X,\tilde p}=(\tilde x_i(\tilde x_1+\alpha_1),\dots,\tilde x_i,\dots,\tilde x_i(\tilde x_c+\alpha_c))=\tilde x_i\mathcal O_{\widetilde X,\tilde p}.
$$
So $\tilde x_i=0$ is a local equation of $\pi^{-1}(T\cap U)\cong (T\cap U)\times\mathbb P^{c-1}$, the exceptional divisor of $\pi$, in $\mathrm{Spec}\ R[\frac{x_1}{x_i},\dots,\frac{x_c}{x_i}]$. Thus $\widetilde X$ is non-singular at $\tilde p\in\pi^{-1}(T)$, and $\mathcal I_T\mathcal O_{\widetilde X,\tilde p}$ is principal. In addition, $\widetilde X\setminus\pi^{-1}(T)\to X\setminus T$ is an isomorphism and therefore $\widetilde X$ is nonsingular and $\mathcal I_T\mathcal O_{\widetilde X}$ is a locally principal ideal sheaf on $\widetilde X$.

\subsubsection{The Resolution Invariant}

The principal invariant considered in the algorithms of strong principalization and embedded resolution of singularities of \cite{BEV,EV} is the order of ideals.

\begin{Definition}\label{ord}
	Let $(R,\mathfrak m)$ be a regular local ring containing a field of characteristic zero. Suppose that $J\subset R$ is an ideal. The \textbf{order} of $J$ in $R$ is
	$$
	\nu_R(J)=\max\{k\ |\ J\subset \mathfrak m^k\}.
	$$
	(The order of $J$ is its multiplicity if $J$ is a locally principal ideal, e.g., $J$ is the defining ideal of a hypersurface). 
\end{Definition}

The basic strategy of resolution is to built a sequence of monoidal transforms centered in the locus of points where a suitable family of upper semi-continuous functions reach their maximum value -- see \cite[Theorems 6.34, 6.35, 6.36 and 6.37]{C2}. These functions are defined inductively and the order is one of the main invariants used to construct these functions -- see \cite[Remark 5.7]{BEV}.

\begin{Remark}
	The key feature of the algorithm, which implies all our required facts, is that the locus of points where these functions reach their maximum value is contained in the closed sets of points of maximum order. For this reason, we can avoid going through the technicalities of the algorithm, and we will simplify notation as much as possible.
\end{Remark}

\begin{Definition}
	Suppose that $X$ is a nonsingular variety and $\mathcal I$ is an ideal sheaf in $\mathcal O_X$. For $p\in X$,
	$$
	\nu_p(\mathcal I)=\nu_{\mathcal O_{X,p}}(\mathcal I\mathcal O_{X,p})=\nu_{\mathcal O_{X,p}}(\mathcal I_p).
	$$
	If $V\subset X$ is a subvariety, we denote $\nu_p(V)=\nu_p(\mathcal I_{V})$. For a natural number $t\in\mathbb N$, we define
	$$
	\mathbf{Sing}_t(\mathcal I)=\{p\in X\ |\ \nu_p(\mathcal I)\geqslant t\},
	$$
	which is Zariski closed in $X$, since $\nu_{\mathcal I}:X\to \mathbb N$, given by $\nu_{\mathcal I}(p)=\nu_p({\mathcal I})$ for $p\in X$, is an upper semi-continuous function \cite [Theorem A.19]{C2}. Let
	$$
	r=\max\nu_{\mathcal I}=\max\{\nu_p(\mathcal I)\mid p\in X\}.
	$$
	Then the \textbf{maximum order locus} of $\mathcal I$ in $X$, denoted by $\mathbf{Max}\ \nu_{\mathcal I}$, is the closed subset
	$$
	\mathbf{Max}\ \nu_{\mathcal I}=\mathbf{Sing}_r(\mathcal I)=\{p\in X\ |\ \nu_p(\mathcal I)=r\}.
	$$
\end{Definition}

\begin{Definition}\label{SNCs}
	Suppose that $V$ is a $d$--dimensional nonsingular $\mathcal K$--variety. 
	\begin{enumerate}
		\item Let $W$ be a subscheme of $V$, and $\mathcal I\subset\mathcal O_V$ be its defining ideal sheaf. We say that $W$ \textbf{\textit{has SNC at}} $p\in V$ if there exist regular system of parameters $v_1,\dots,v_d\in\mathcal O_{V,p}$ and $(a_{ij})_{\substack{1\leqslant i\leqslant c \\1\leqslant j\leqslant d}}\in\mathbb N^{c\times d}$ such that
		\begin{equation*}
		\mathcal I_p=\mathcal I\mathcal O_{V,p}=\langle v_1^{a_{11}}\cdots v_d^{a_{1d}}, \dots , v_1^{a_{c1}}\cdots v_d^{a_{cd}} \rangle,
		\end{equation*}
		where $c=\max\{\mathrm{codim}_V T\ |\ T \mbox{ is an irreducible component of } W \mbox{ containing }p\}$. We say that $W$ is a \textbf{\textit{SNC subscheme}} if it has SNC at all $p\in V$ \footnote{That is, algebraically, the ideal sheaf $\mathcal I$ of $W$ is locally monomial, or geometrically, all of the irreducible components of $W$ are nonsingular and they meet pairwise transversally.}, e.g., the support of a SNC divisor $E$ on $V$ is a purely 1-codimensional SNC subscheme in $V$.
		
		\item Suppose that $E$ is a SNC divisor on $V$ and let $Z\subset V$ be a nonsingular irreducible subvariety. We say that $Z$ \textbf{\textit{makes SNC with}} $E$ at $p\in V$ if $E\cup Z$ has SNC at $p$, and we say $Z$ \textbf{\textit{makes SNC with}} $E$ if $E\cup Z$ is a SNC subscheme of $V$. More precisely, $Z$ makes SNC with $E$ if for all $p\in V$, there exist a regular system of parameters $v_1,\ldots,v_d$ at $p$, natural numbers $r\ge 0$ and $a_1,\ldots,a_d\ge 0$ such that $v_1^{a_1}\cdots v_d^{a_d}=0$ is a local equation of $E$ at $p$, and there exists $\{i_1,\dots,i_r\}\subset [d]$ such that $v_{i_1}=\cdots=v_{i_r}=0$ are local equations of $Z$ at $p$. (If $p\not\in Z$, then we take $r=0$ and if $p\not\in E$, we take $a_i=0$ for all $i$).
	\end{enumerate}
\end{Definition}

\subsection{Embedded Resolution of Singularities}
We now apply the ERS to construct a global toroidal structure on a locally toroidal variety $(Y,V_{\a},E_{\a})_{\a \in I}$. In the following theorem, We give two results about the resolution sequence of $\widetilde{\mathcal{E}}_0=\sum_{\alpha \in I}\overline{E}_{\alpha}$ which are fundamental to constructing toroidalization. 
\begin{Theorem}\label{ERS}
	Let $(V_{\alpha},E_{\alpha})_{\alpha\in I}$ be local toroidal data of a nonsingular variety $Y$, and consider the hypersurface $\widetilde{\mathcal{E}}_0=\sum_{\alpha \in I}\overline{E}_{\alpha}\subset Y$, where $\overline{E}_{\alpha}$ is the Zariski closure of $E_{\alpha}$ in $Y$. There exists a finite sequence
	\begin{equation}
	\pi:\widetilde Y = Y_{n_0}\xrightarrow{\pi_{n_0}} Y_{n_0-1}\xrightarrow{}\cdots\xrightarrow{}Y_k\xrightarrow{\pi_k}Y_{k-1}\xrightarrow{}\cdots\to Y_1\xrightarrow{\pi_1}Y_0=Y
	\end{equation}
	of monoidal transforms centered in the closed sets of points of maximum order
	such that $\widetilde Y$ is nonsingular and the total transform $\pi^{-1}(\widetilde{\mathcal E}_0)$ is a SNC divisor on $\widetilde Y$, i.e., $\pi$ is
	an embedded resolution of singularities of $\widetilde{\mathcal{E}}_0$. For $0\leqslant k\leqslant n_0$ and $\alpha \in I$, let:
	\begin{center}
		\begin{tabular}{lll}
			
			$\Pi_k=\pi_0\circ\cdots\circ\pi_k$, & \ \  & $V_{k,\alpha}=\Pi_k^{-1}(V_{\alpha})$ \\
			
			$\pi_{k,\alpha}=\pi_k|_{V_{k,\alpha}}:V_{k,\alpha}\rightarrow V_{k-1,\alpha}$, & \ \  & $\Pi_{k,\alpha}=\Pi_k|_{V_{k,\alpha}}:V_{k,\alpha}\to V_{\alpha}$, \\
			
			$Z_{k}$: the center of $\pi_{k+1}$, & \ \  & $Z_{k,\alpha}=Z_k\cap V_{k,\alpha}$, \\
			
			$E_{k,\alpha}=\Pi_{k,\alpha}^{-1}(E_{\alpha})=\Pi_{k,\alpha}^*(E_\alpha)_{\rm red}$, & \ \ & $\widetilde{\mathcal E}_k=(\sum_{\alpha\in I}\overline E_{k,\alpha})_{\rm red}$, 
		\end{tabular}
	\end{center}
	where $\overline E_{k,\alpha}$ is the Zariski closure of $E_{k,\alpha}$ in $Y_k$. We further have that: 
	\begin{enumerate}
		\item\label{ERS1} $E_{k,\alpha}$ is a SNC divisor on $V_{k,\alpha}$ for all $k,\alpha$, and $Z_{k,\alpha}$ makes SNCs with
		$E_{k,\alpha}$ on $V_{k,\alpha}$ for all $k,\alpha$. (Although possibly $Z_{k,\alpha}\cap E_{k,\alpha}\ne \emptyset$ but
		$Z_{k,\alpha}\not\subset E_{k,\alpha}$).
		\item\label{ERS2} $\widetilde{\mathcal E}_k\subseteq\Pi_k^{-1}(\widetilde{\mathcal E}_0)$ for all $k$.\footnote{See \cite[Example 2.5]{A1} for an example which shows that the inclusion of (\ref{ERS2}) is not in general an equality.}
	\end{enumerate}
\end{Theorem}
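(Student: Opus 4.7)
The plan is to take $\pi:\widetilde Y\to Y$ to be the embedded resolution of singularities of the hypersurface $\widetilde{\mathcal{E}}_0\subset Y$ produced by the algorithm of \cite{BEV,EV}, in the form summarized in \cite[Theorem 6.37]{C2}. This directly delivers the sequence of monoidal transforms with nonsingular centers $Z_k$ contained in the maximum-order locus of an appropriate controlled transform of $\mathcal I_{\widetilde{\mathcal E}_0}$, together with the terminal conclusion that $\pi^{-1}(\widetilde{\mathcal E}_0)$ is SNC on $\widetilde Y$. The two properties of the algorithm I will exploit throughout are (i) \emph{permissibility}: each $Z_k$ makes SNC with the reduced total transform $\Pi_k^{-1}(\widetilde{\mathcal E}_0)_{\rm red}$ on $Y_k$, and (ii) at every point of $Z_k$ the order of the relevant controlled transform is at least $2$.

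Property (\ref{ERS2}) is essentially formal. From $E_\alpha\subset\overline E_\alpha\subset\widetilde{\mathcal E}_0$ on $Y$ I get
\[
E_{k,\alpha}=\Pi_{k,\alpha}^{-1}(E_\alpha)\subset \Pi_k^{-1}(\widetilde{\mathcal E}_0),
\]
and since $\Pi_k^{-1}(\widetilde{\mathcal E}_0)$ is closed in $Y_k$, taking Zariski closure preserves the inclusion, giving $\overline{E}_{k,\alpha}\subset\Pi_k^{-1}(\widetilde{\mathcal E}_0)$. Summing over $\alpha\in I$ and reducing yields (\ref{ERS2}).

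I prove (\ref{ERS1}) by induction on $k$. The base case $k=0$ combines the hypothesis that $E_\alpha$ is SNC on $V_\alpha$ with (i): at each $p\in V_\alpha$, any regular system of parameters $v_1,\dots,v_d$ presenting $\widetilde{\mathcal E}_0$ as a monomial and $Z_0$ as an intersection of coordinate hyperplanes simultaneously presents the subdivisor $E_\alpha$ as a monomial in those parameters, so $Z_{0,\alpha}$ makes SNC with $E_{0,\alpha}$. For the inductive step, the local calculation (\ref{blup eqs}) shows that the reduced total transform of an SNC divisor under the blow-up of a permissible nonsingular center is again SNC; applied with center $Z_{k-1,\alpha}$ and divisor $E_{k-1,\alpha}$ (both SNC and SNC-compatible by the inductive hypothesis), this gives that $E_{k,\alpha}=\pi_{k,\alpha}^{-1}(E_{k-1,\alpha})$ is SNC on $V_{k,\alpha}$. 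Then (i) at level $k$, combined with (\ref{ERS2}) (which places every component of $\overline{E}_{k,\alpha}$ among the components of $\Pi_k^{-1}(\widetilde{\mathcal E}_0)_{\rm red}$ passing through a given point of $V_{k,\alpha}$), gives by the same subdivisor argument as in the base case that $Z_{k,\alpha}$ makes SNC with $E_{k,\alpha}$.

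The remaining clause $Z_{k,\alpha}\not\subset E_{k,\alpha}$ is where property (ii) is genuinely needed. If some irreducible component of $Z_{k,\alpha}$ were contained in a component of $E_{k,\alpha}$, then at a generic point of that component the local equation of the component would appear linearly in the monomial expression of the controlled transform of $\mathcal I_{\widetilde{\mathcal E}_0}$, so its order there would be at most $1$, contradicting (ii). The main obstacle, and where I expect to invest the most technical effort in a full write-up, is extracting properties (i) and (ii) in clean form from the construction of the resolution invariant in \cite{BEV,EV}, in particular tracking how the controlled transform interacts with the reduced total transform divisor across the whole resolution sequence; once these two properties are isolated as lemmas, the argument above is largely bookkeeping.
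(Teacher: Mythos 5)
Your treatment of part (\ref{ERS2}) is correct and cleanly stated, and the observation that blowing up a center which makes SNC with an SNC divisor yields an SNC total transform is sound. But the input you call property (i) is not an output of the algorithm and is in fact false at intermediate stages: for $0\le k<n_0$ the reduced total transform $\Pi_k^{-1}(\widetilde{\mathcal E}_0)_{\rm red}$ is generally not an SNC divisor (that is precisely what the sequence is resolving), so ``$Z_k$ makes SNC with $\Pi_k^{-1}(\widetilde{\mathcal E}_0)_{\rm red}$'' is not even well defined. The permissibility supplied by the algorithm (\cite[Theorem 6.37]{C2}) is relative to the accumulated \emph{exceptional} divisor, which is SNC by construction, whereas the strict transform of $\widetilde{\mathcal E}_0$ is typically still singular along $Z_k$. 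Your base case already displays the gap: there need not exist regular parameters at a point of $Z_0$ presenting $\widetilde{\mathcal E}_0$ as a monomial, since $\widetilde{\mathcal E}_0$ is singular there. The missing ingredient is the genuinely different structural fact recorded as Lemma~\ref{Lemma1} (\cite[Lemma 3.6]{A2}): for every irreducible component $A$ of $\widetilde{\mathcal E}_0$, the strict transform $A_k$ on $Y_k$ satisfies $Z_k\subset A_k$ or $Z_k\cap A_k=\emptyset$. It is the conjunction of this dichotomy, permissibility relative to the exceptional locus, and the gluing statement Lemma~\ref{Lemma2} (\cite[Lemma 3.7]{A2}) that delivers ``$Z_{k,\alpha}$ makes SNC with each $E_{k,\alpha}$'' separately, without ever requiring $\Pi_k^{-1}(\widetilde{\mathcal E}_0)_{\rm red}$ to be SNC. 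Without Lemma~\ref{Lemma1} neither your base case nor your inductive step goes through.

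Separately, the parenthetical ``(Although possibly $Z_{k,\alpha}\cap E_{k,\alpha}\ne\emptyset$ but $Z_{k,\alpha}\not\subset E_{k,\alpha}$)'' in the statement is a remark about what may happen, not a clause to be proved; it is contrasting the behaviour of $E_{k,\alpha}$ (which also contains exceptional components) with the dichotomy of Lemma~\ref{Lemma1}. In fact $Z_{k,\alpha}\subset E_{k,\alpha}$ does occur whenever $Z_k$ lies inside the strict transform of some $\overline E_\alpha$, so the extra assertion you try to establish with your property (ii) is simply false as a universal statement, and the order argument you sketch for it does not apply.
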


This theorem is a generalization of \cite[Theorem 3.4]{A2} to higher-dimensional varieties, and we will prove it by a completely similar argument. Our proof is based on \cite[Lemmas 3.6 and 3.7]{A2}, which were proved in arbitrary dimensions. To be self-contained, we repeat them below. We just note that the argument of \cite[Lemmas 3.6]{A2} works for a slightly more general statement.

\begin{Lemma}[Lemma 3.6 \cite{A2}]\label{Lemma1}
	Suppose that $E$ is a hypersurface embedded in a nonsingular $\mathcal K$--variety $V$. Let $Z_k$ be the permissible center of a transformation $V_{k+1}\to V_{k}$, $k\geqslant 0$, in the sequence of embedded resolution of singularities of $E$ constructed in \cite[Theorem 6.37]{C2}. Then the strict transform $A_k$ of any irreducible component of $E$ on $V_k$ satisfies:
	\begin{equation}
	Z_k\subset A_k \mbox{ or }Z_k\cap A_k=\emptyset.
	\end{equation}
\end{Lemma}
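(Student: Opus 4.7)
The plan is to combine three ingredients drawn from the setup above: (i) the key feature of the BEV algorithm recalled in the remark preceding the lemma, namely that the center $Z_k$ lies inside the maximum-order locus of the (strict transform of the) ideal sheaf of $E$ at stage $k$; (ii) the local factorization of a hypersurface equation into factors defining its irreducible components, which yields an additive decomposition of the order; and (iii) upper semi-continuity of the order of each individual component's ideal, via \cite[Theorem A.19]{C2}.

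First I would let $\mathcal E_k$ denote the strict transform of $E$ on $V_k$, so that the irreducible components of $\mathcal E_k$ are precisely the strict transforms $A_{k,i}$ of the irreducible components of $E$. Since $Z_k$ is contained in $\mathbf{Max}\ \nu_{\mathcal I_{\mathcal E_k}}$, the function $p \mapsto \nu_p(\mathcal I_{\mathcal E_k})$ is constant on $Z_k$, with common value $r = \max\nu_{\mathcal I_{\mathcal E_k}}$. At any $p\in V_k$, the ideal $\mathcal I_{\mathcal E_k}$ is locally generated by a single element of the form $\prod_i f_i$, where $f_i$ is a local equation of $A_{k,i}$ near $p$ (and $f_i$ is a unit at $p$ when $p\notin A_{k,i}$). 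This gives the pointwise decomposition
\[
\nu_p(\mathcal I_{\mathcal E_k}) \;=\; \sum_i \nu_p(\mathcal I_{A_{k,i}}),
\]
valid for every $p\in V_k$, in which summands associated with components not containing $p$ vanish.

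Next I would compare orders at the generic point $\eta$ of $Z_k$ with those at an arbitrary point $p\in Z_k$. Since $p$ is a specialization of $\eta$, upper semi-continuity of each $\nu(\mathcal I_{A_{k,i}})$ gives $\nu_p(\mathcal I_{A_{k,i}}) \ge \nu_\eta(\mathcal I_{A_{k,i}})$ for every index $i$. Summing over $i$ and using the constancy of $\nu(\mathcal I_{\mathcal E_k})=r$ on $Z_k$ forces termwise equality, so that $\nu_p(\mathcal I_{A_{k,i}}) = \nu_\eta(\mathcal I_{A_{k,i}})$ for every $i$ and every $p\in Z_k$. Specializing to the fixed component $A_k$, the value $\nu_\eta(\mathcal I_{A_k})$ is either $0$ (when $\eta\notin A_k$) or $\ge 1$ (when $\eta\in A_k$); termwise constancy propagates the same alternative to every $p\in Z_k$, yielding $Z_k\cap A_k=\emptyset$ or $Z_k\subset A_k$, respectively.

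The main obstacle — and really the only place where the specific algorithm of \cite[Theorem 6.37]{C2} is used — is the correct identification of the ideal sheaf whose maximum-order locus contains $Z_k$, and the assertion that $Z_k$ actually lies inside that locus. This is exactly what the remark preceding the lemma provides, so it can be invoked as a black box. Everything else is a formal consequence of additivity of orders on hypersurfaces combined with upper semi-continuity, and no further input from the structure of the algorithm is needed.
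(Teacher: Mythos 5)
The paper does not actually prove Lemma~\ref{Lemma1}: it cites \cite[Lemma 3.6]{A2} and reproduces only the statement, so there is no internal argument to measure yours against. Judged on its own, your proof is correct and is exactly the kind of argument the cited remark is set up to license. The two working parts are (a) the black-box fact that the center $Z_k$ lies in $\mathbf{Max}\,\nu_{\bar{\mathcal J}_k}$, which for a hypersurface is $\mathbf{Max}\,\nu_{\mathcal I_{\mathcal E_k}}$ because weak and strict transforms agree (as the proof of Theorem~\ref{ERS} notes), and (b) the additivity $\nu_p(\mathcal I_{\mathcal E_k})=\sum_i d_i\,\nu_p(\mathcal I_{A_{k,i}})$ coming from factoring a local equation in the regular (UFD) local ring, combined with upper semi-continuity of each $\nu(\mathcal I_{A_{k,i}})$ applied to the specialization $\eta \rightsquigarrow p$.

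Two small points worth flagging so the argument is airtight. First, $\widetilde{\mathcal E}_0=\sum_\alpha\overline{E}_\alpha$ may carry multiplicities $d_i>1$ if different $\overline{E}_\alpha$ share a component; you wrote the additive decomposition without the $d_i$, which is the reduced case. The termwise-equality step is unaffected since the $d_i$ are fixed positive integers, but the equation should carry them. Second, your ``constancy on $Z_k$'' implicitly includes the generic point $\eta$; that $\nu_\eta(\mathcal I_{\mathcal E_k})=r$ does follow from upper semi-continuity and density of closed points in $Z_k$, but it is used in the pinching argument and deserves a sentence. With these minor additions the proof is complete and matches the intended use in Theorem~\ref{ERS}, including the degenerate cases where $A_k$ is empty (strict transform swallowed by an earlier center) or where the maximum order has dropped to $1$.
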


\begin{Lemma}[Lemma 3.7 \cite{A2}]\label{Lemma2}
	Suppose that $V$ is a nonsingular $\mathcal K$--variety and $E=A+B$ is a SNC divisor on $V$ where $A$ and $B$ have no irreducible components in common. Suppose that $Z$ is a nonsingular subvariety of $V$ such that $Z$ makes SNCs with $A$ and if $C$ is an irreducible component of $B$ then either $Z\subset C$ or $Z\cap C=\emptyset$. Then $Z$ makes SNCs with $E$.
\end{Lemma}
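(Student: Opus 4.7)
The argument is local, so I will fix a closed point $p\in V$ and construct regular parameters at $p$ witnessing that $Z$ makes SNC with $E$. If $p\notin Z$ the required condition reduces to $E$ being SNC at $p$, which holds by hypothesis; thus I focus on $p\in Z$ and set $r=\mathrm{codim}_V Z$. The plan is to start with parameters adapted to $Z$ and $A$, and then perform a linear exchange inside the first $r$ coordinates to simultaneously accommodate the components of $B$ through $p$.

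By the hypothesis that $Z$ makes SNC with $A$, there exist regular parameters $w_1,\dots,w_d\in\mathcal O_{V,p}$ such that, after reindexing, $Z$ is locally cut out by $w_1,\dots,w_r$ and a monomial $w_1^{\alpha_1}\cdots w_d^{\alpha_d}$ is a local equation of $A$ at $p$; the components of $A$ through $p$ are then $V(w_j)$ for $j\in J:=\{j:\alpha_j>0\}$. Let $B_1,\dots,B_\ell$ be the components of $B$ through $p$. Since $p\in Z\cap B_i$, the second hypothesis forces $Z\subseteq B_i$, so the local equation $f_i$ of $B_i$ lies in $\mathcal I_{Z,p}=(w_1,\dots,w_r)$. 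Because $E=A+B$ is SNC at $p$, the collection $\{w_j:j\in J\}\cup\{f_i\}_{i=1}^\ell$ is linearly independent modulo $\mathfrak m_p^2$, since each element is, up to a unit, the local equation of a distinct irreducible component of $E$ through $p$.

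Set $J_1:=J\cap\{1,\dots,r\}$. The independence above, restricted to the $r$\nobreakdash-dimensional subspace spanned by the classes of $w_1,\dots,w_r$ in $\mathfrak m_p/\mathfrak m_p^2$, gives $|J_1|+\ell\leqslant r$, and $\{w_j:j\in J_1\}\cup\{f_i\}$ extends to a basis of that subspace by adjoining $w_j$ for $j$ in some $S\subseteq\{1,\dots,r\}\setminus J_1$ with $|S|=r-|J_1|-\ell$. Choose any bijection $\tau:\{1,\dots,r\}\setminus(J_1\cup S)\to\{1,\dots,\ell\}$, and define
\[
v_j=\begin{cases} w_j & \text{if } j\in J_1\cup S \text{ or } j>r, \\ f_{\tau(j)} & \text{if } j\in\{1,\dots,r\}\setminus(J_1\cup S). \end{cases}
\]
By construction, the classes of $v_1,\dots,v_d$ form a basis of $\mathfrak m_p/\mathfrak m_p^2$, so $v_1,\dots,v_d$ is a regular system of parameters; moreover $(v_1,\dots,v_r)\subseteq(w_1,\dots,w_r)=\mathcal I_{Z,p}$ is a prime of height $r$ contained in another prime of height $r$, hence the two ideals coincide, and $Z$ is locally defined by $v_1=\cdots=v_r=0$. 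The local equation of $E$ at $p$ is $\prod_j w_j^{\alpha_j}\cdot\prod_i f_i^{m_i}$, and since indices $j$ with $\alpha_j>0$ satisfy $w_j=v_j$ while each $f_i$ has been installed as some $v_j$, this product is a monomial in $v_1,\dots,v_d$.

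The main obstacle is really just bookkeeping: I must keep track of the reindexings so that $J_1\cup S$ lies in $\{1,\dots,r\}$ and the $f_i$'s are slotted into the complementary positions, all without disturbing the defining ideal of $Z$. This is automatic because every rearrangement happens either among $\{1,\dots,r\}$ (which preserves $\mathcal I_{Z,p}=(w_1,\dots,w_r)$) or among $\{r+1,\dots,d\}$. The linear-algebra step of extending $\{w_j:j\in J_1\}\cup\{f_i\}$ to a basis of $\langle w_1,\dots,w_r\rangle$ is Steinitz exchange, and the identification $(v_1,\dots,v_r)=\mathcal I_{Z,p}$ uses only the standard fact that in a regular local ring, a containment of primes of equal height is an equality.
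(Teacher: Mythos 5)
Your argument is correct: the key points---that each local equation $f_i$ of a component of $B$ through $p$ lies in $\mathcal I_{Z,p}=(w_1,\dots,w_r)$ because $Z\subset B_i$, that $\{w_j\}_{j\in J}\cup\{f_i\}$ is linearly independent modulo $\mathfrak m_p^2$ since these are, up to units, local equations of distinct irreducible components of the SNC divisor $E$ through $p$, and that $(v_1,\dots,v_r)=\mathcal I_{Z,p}$ by containment of primes of equal height---are all justified, and the resulting regular system of parameters simultaneously monomializes $E$ and cuts out $Z$, exactly as required by Definition \ref{SNCs}. Note that this paper only quotes the lemma from \cite{A2} without reproducing its proof, so there is no in-paper argument to compare against; your local parameter-exchange proof is the natural one and follows the expected line of the cited source.
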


\begin{proof}[Proof of Theorem \ref{ERS}]
	By \cite[Theorem 6.41]{C2} and its proof, the embedded resolution of singularities of the hypersurface $\widetilde{\mathcal{E}}_0\subset Y$, constructed in \cite[Theorem 6.37]{C2}, is a finite sequence of monoidal transforms $\pi:\widetilde Y\to Y$ centered in the maximum order locus where $\widetilde Y$ is nonsingular and the total transform $\pi^{-1}(\widetilde{\mathcal E}_0)$ is a SNC divisor on $\widetilde Y$. More precisely, denoting by $\mathcal J_0$ the defining ideal sheaf of $\widetilde{\mathcal{E}}_0$, we have a sequence 
	\begin{equation}
	\pi:\widetilde Y = Y_{n_0}\xrightarrow{\pi_{n_0}} Y_{n_0-1}\xrightarrow{}\cdots\xrightarrow{}Y_k\xrightarrow{\pi_k}Y_{k-1}\xrightarrow{}\cdots\to Y_1\xrightarrow{\pi_1}Y_0=Y
	\end{equation}
	of monoidal transforms such that each $\pi_k$ is the blowup of a nonsingular irreducible subvariety of $\mathbf{Max}\ \nu_{\bar{\mathcal J}_{k-1}}$ where $\bar{\mathcal J}_{k-1}$ is the weak transform of $\mathcal J_0$ in $Y_{k-1}$, and $\bar{\mathcal J}_0=\mathcal J_0$. We note that since $\mathcal J_0$ is a locally principal ideal sheaf, and each transformation in this sequence is centered in a nonsingular subvariety of the maximum order locus, the strict transform of $\mathcal J_0$ on $Y_k$ coincides with its weak transform $\bar{\mathcal J}_{k}$.
	
	We will prove the properties (\ref{ERS1}) and (\ref{ERS2}) by induction on the length $k$ of the resolution sequence. The base case $k=0$ follows obviously by applying Lemmas \ref{Lemma1} and \ref{Lemma2} to $E_{\a}=\emptyset+E_{\a}$ on $V_{\a}$, so for each $\a\in I$, $Z_{0,\a}=Z_0\cap V_{\a}$ makes SNCs with $E_{\a}$. Now assume the statement is true for any sequence of length strictly less than $k$, and consider a resolution sequence $Y_k\to Y$ of length $k$. First we note that for all $\a\in I$, $E_{k,\a}$ is the pullback $\pi_{k,\a}^{-1}(E_{k-1,\a})$ of $E_{k-1,\a}$ via the blowup $\pi_{k,\a}:V_{k,\a}\to V_{k-1,\a}$ with center $Z_{k-1,\a}$. By the induction hypothesis, for all $\a\in I$, $E_{k-1,\a}$ is a SNC divisor on $V_{k-1,\a}$ and $Z_{k-1,\a}$ makes SNCs with $E_{k-1,\a}$ on $V_{k-1,\a}$. Hence for all $\a\in I$, $E_{k,\a}$ is a SNC divisor on $V_{k,\a}$. We now decompose $E_{k,\a}=A+B$ where $A$ is the sum of exceptional components of $\Pi_{k,\a}$ and $B$ is the strict transform of $E_{\a}$ on $V_{k,\a}$ via $\Pi_{k,\a}$. We must show that $Z_{k,\a}=Z_k\cap V_{k,\a}$ makes SNCs with $E_{k,\a}$. Clearly, $A$ and $B$ have no irreducible components in common, and by Lemma \ref{Lemma1}, if $C$ is an irreducible component of $B$ then either $Z_{k,\a}\subset C$ or $Z_{k,\a}\cap C=\emptyset$. In addition, $Z_k$ is permissible for the resolution algorithm (see \cite[Definition 6.25]{C2}), and hence $Z_{k,\a}$ makes SNCs with $A$. Therefore, Lemma \ref{Lemma2} implies that $Z_{k,\a}$ makes SNCs with $E_{k,\a}$. Finally, we observe that (\ref{ERS2}) holds trivially just as $Z_k$ is a permissible center for the algorithm and $Z_k\subseteq\Pi_k^{-1}(\widetilde{\mathcal{E}}_0)$ for all $k$.
\end{proof}

\begin{Definition}\label{n-SV}
	Suppose that $V$ is a nonsingular variety of dimension $m$ and $E$ is a SNC divisor on $V$. Let $Z\subset V$ be a $c$ codimensional nonsingular irreducible subvariety of $V$ which makes SNC with $E$ at $p\in V\cap Z$. We say that $Z$ is an 
	$\mathbf{(\l,\bar \l)\mbox{\textbf{--subvariety (SV) for }} E \mbox{\textbf{ at }} p}$ if $Z$ lies in exactly $\bar \l$ irreducible components of $E$ and $p$ lies in exactly $\l$ irreducible components of $E$. We have $0\leqslant \l\leqslant m$, $0\leqslant \bar \l \leqslant \mathrm{codim}_V Z=c$ and $\bar \l\leqslant \l$.
\end{Definition}


\begin{Definition}\label{SV PerPar}	
	Suppose that $Z\subset (V,E)$ is an $(\l,\bar \l)$--SV for $E$ at $q\in V\cap Z$ with $\mathrm{codim}_V Z=c$. We say that $y_1,\dots,y_m$ are \textbf{\textit{(formal) permissible parameters for $Z$ at $q$ (on $E$)}} if $y_1,\dots,y_m$ are regular parameters in $\hat{\mathcal O}_{V,q}$ such that $y_1\cdots y_{\l}=0$ is a (formal) local equation of $E$ at $q$, and local equations of $Z$ at $q$ are
	$$
	y_1=\cdots=y_{\bar\l}=y_{\l+1}=\cdots=y_{\l+(c-\bar\l)}=0.
	$$
	(We use the convention that $\{\l+1,\dots,\l+(c-\bar\l)\}=\emptyset$ if $c=\bar\l$, so that in this case $y_{\l+1},\dots,y_{\l+(c-\bar\l)}$ are not contained in the local equations of $Z$). We say that permissible parameters $y_1,\dots,y_m$ are \textbf{\textit{algebraic}} if $y_1,\dots,y_m\in\mathcal O_{V,q}$.
\end{Definition}

\begin{Lemma}\label{tf in terms of perpars for SVs}
	Suppose that $\varphi:(U,D)\to (V,E)$ is a toroidal morphism. Let $q\in E$ be an $\ell$--point for $E$ and let $Z\subset V$ be a $c$ codimensional $(\ell,\bar\ell)$--SV for $E$ at $q\in V$. Suppose that $p\in\varphi^{-1}(q)$ is an $n$--point for $D$. There exist permissible parameters $y_1,\dots,y_m$ for $Z$ at $q$ on $E$, and permissible parameters $x_1,\dots,x_d$ at $p$ such that
	\begin{equation}\label{TF'}
	y_i=\left\{
	\begin{array}{ll}
	\delta_i x_1^{a_{i1}} \cdots  x_n^{a_{in}}, & 1\leqslant i\leqslant \ell \\
	\ \ \ x_{n-\ell+i}, & \ell+1\leqslant i\leqslant m
	\end{array}
	\right.
	\end{equation}
	where  $\delta_i \in \hat{\mathcal O}_{U,p}$, $1\leqslant i\leqslant \ell$, are units in which the variables $x_1,\dots,x_{n+(m-\l)}$ do not appear, and $(a_{ij})_{\substack{1\leqslant i\leqslant \ell \\ 1\leqslant j\leqslant n}}\in\mathbb N^{\ell\times n}$ satisfies:
	\begin{equation}\label{ex matrix condition}
	\mbox{for all }j\in [n], \sum_{i=1}^{\ell}a_{ij}> 0,\mbox{ and for all }i\in[\ell], \sum_{j=1}^{n}a_{ij}> 0.
	\end{equation}
\end{Lemma}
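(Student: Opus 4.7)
The plan is to start from Theorem~\ref{TF} applied to $\varphi$ at $p$ and $q$, which yields formal permissible parameters $\tilde y_1, \dots, \tilde y_m$ for $E$ at $q$ and $\tilde x_1, \dots, \tilde x_d$ at $p$ satisfying the toroidal form~(\ref{TFeq}); each $\tilde y_i$ with $1 \le i \le \ell$ is a local equation of one of the $\ell$ irreducible components of $E$ through $q$. We first permute $\tilde y_1, \dots, \tilde y_\ell$ (and the rows of $(a_{ij})$ accordingly) so that $Z \subset \{\tilde y_1 = \cdots = \tilde y_{\bar\ell} = 0\}$. After this permutation, the toroidal form still reads $\tilde y_i = \epsilon_i\,\tilde x_1^{a_{i1}}\cdots \tilde x_n^{a_{in}}$ for $1\le i\le\ell$, where each unit $\epsilon_i$ is either $1$ or of the form $\tilde x_{n+k}+\alpha_{n+k}$ for some $k \in \{1, \dots, \ell-r\}$; in particular each $\epsilon_i$ involves at most one variable, and only from the block $\{\tilde x_{n+1}, \dots, \tilde x_{n+\ell-r}\}$.

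The first key step is to produce permissible parameters $y_1, \dots, y_m$ for $Z$ at $q$ on $E$ satisfying $y_i = \tilde y_i$ for $1 \le i \le \ell$. Starting from any permissible parameters $y_1^{(0)}, \dots, y_m^{(0)}$ for $Z$ (which exist by Definition~\ref{SV PerPar}), both $y_1^{(0)}\cdots y_\ell^{(0)}$ and $\tilde y_1\cdots\tilde y_\ell$ are local equations of $E$ at $q$; since each factor is prime in the UFD $\hat{\mathcal O}_{V,q}$, after a permutation of the $y_j^{(0)}$ preserving the partition $\{1,\dots,\bar\ell\}\sqcup\{\bar\ell+1,\dots,\ell\}$ there exist units $u_i \in \hat{\mathcal O}_{V,q}^\times$ with $y_i^{(0)} = u_i \tilde y_i$ for $1 \le i \le \ell$. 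Since $(\tilde y_1, \dots, \tilde y_{\bar\ell}) = (y_1^{(0)}, \dots, y_{\bar\ell}^{(0)})$ as ideals, redefining $y_i := \tilde y_i$ for $i \le \ell$ while keeping $y_i := y_i^{(0)}$ for $\ell < i \le m$ still gives permissible parameters for $Z$ on $E$.

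Next we define new parameters at $p$ by $x_i := \tilde x_i$ for $1 \le i \le n$, $x_{n+j} := \hat\varphi^*(y_{\ell+j})$ for $1 \le j \le m-\ell$, $x_{n+m-\ell+j} := \tilde x_{n+j}$ for $1 \le j \le \ell-r$, and $x_{n+m-r+j} := \tilde x_{n+m-r+j}$ for $1 \le j \le d-n-m+r$. To verify this is a regular system at $p$, note that since $\{y_1, \dots, y_m\}$ is regular at $q$ with $y_i = \tilde y_i$ for $i \le \ell$, the lower-right $(m-\ell) \times (m-\ell)$ block $(c_{j,\ell+k})_{1 \le j,k \le m-\ell}$ of the expansion $y_{\ell+j} \equiv \sum_{i=1}^m c_{j,i}\,\tilde y_i \pmod{\mathfrak m_q^2}$ is invertible. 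Since $\hat\varphi^*(\tilde y_i) \in (\tilde x_1, \dots, \tilde x_n)$ for $i \le \ell$ and $\hat\varphi^*(\tilde y_k) = \tilde x_{n-r+k}$ for $k > \ell$, pulling back yields
\[
x_{n+j} \equiv \sum_{k=\ell+1}^m c_{j,k}\,\tilde x_{n-r+k} \pmod{(\tilde x_1, \dots, \tilde x_n) + \mathfrak m_p^2},
\]
so $x_1, \dots, x_{n+m-\ell}$ together with the adjoined variables (drawn from the disjoint index blocks $\{\tilde x_{n+1},\dots,\tilde x_{n+\ell-r}\}$ and $\{\tilde x_{n+m-r+1},\dots,\tilde x_d\}$) have a block-triangular invertible Jacobian modulo $\mathfrak m_p^2$ and thus form a regular system at $p$.

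Finally, substituting into~(\ref{TFeq}): for $1 \le i \le \ell$, $y_i = \tilde y_i = \epsilon_i\,x_1^{a_{i1}}\cdots x_n^{a_{in}}$ with $\epsilon_i \in \{1\} \cup \{\tilde x_{n+k} + \alpha_{n+k} : 1 \le k \le \ell-r\}$; under the renaming $\tilde x_{n+k} = x_{n+m-\ell+k}$ the unit $\delta_i := \epsilon_i$ involves only variables $x_k$ with $k > n+m-\ell$, as required. For $\ell < i \le m$, $y_i = x_{n-\ell+i}$ is immediate from the definition of $x_{n+j}$. The matrix condition $\sum_j a_{ij} > 0$ and $\sum_i a_{ij} > 0$ is inherited from Theorem~\ref{TF}. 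The main obstacle is the combined bookkeeping: simultaneously verifying that the row-permutation keeps each $\epsilon_i$ supported in the designated unit block, that the pulled-back $y_{\ell+j}$ yield regular parameters at $p$, and that the adjoined $\tilde x$'s complete them to a regular system --- all of which hinge on the clean disjointness, in Theorem~\ref{TF}, of the four index blocks (monomial, unit, pure-image, unused) of the $\tilde x$'s.
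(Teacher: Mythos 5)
Your proof is correct and follows essentially the same route as the paper's: apply Theorem~\ref{TF}, permute/rename the $y$'s to align with the local equations of $Z$, replace the ``pure-image'' block of $x$-variables by the pullbacks of the adjusted $y_{\ell+1},\dots,y_m$ while shuffling the ``unit'' and ``unused'' blocks to the tail, and verify regularity by a Jacobian computation whose invertibility reduces to the $(m-\ell)\times(m-\ell)$ block coming from $\det J(\mathbf{\tilde y};\mathbf y)\neq 0$. The only cosmetic differences are that you pass through an auxiliary system $y^{(0)}$ and match it with the Theorem~\ref{TF} parameters via units in the UFD $\hat{\mathcal O}_{V,q}$, whereas the paper simply permutes the Theorem~\ref{TF} parameters directly; and that you check invertibility of the Jacobian after reducing modulo $(\tilde x_1,\dots,\tilde x_n)+\mathfrak m_p^2$, whereas the paper performs explicit row operations on the full matrix. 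Both verifications rest on the same disjointness of the monomial, unit, pure-image, and unused index blocks and on the positivity of the row sums $\sum_j a_{ij}$ (which keeps the linear parts of $\hat\varphi^*(\tilde y_i)$, $i\le\ell$, out of the unit and pure-image columns).
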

\begin{proof}
	Since $\varphi:(U,D)\to (V,E)$ is toroidal, Theorem \ref{TF} tells us that there exist permissible parameters $\mathbf{y}=(y_1,\dots,y_m)$ at $q$, permissible parameters $\mathbf x=(x_1,\dots,x_d)$ at $p$, an exponent matrix $(a_{ij})_{\substack{1\leqslant i\leqslant \ell \\ 1\leqslant j\leqslant n}}\in\mathbb N^{\ell\times n}$ of rank $r$, and $\alpha_{n+1},\dots,\alpha_{n-r+\ell}\in\mathcal K^{\times}$ such that 
	\begin{equation}\label{tfeq1}
	y_i=\left\{
	\begin{array}{ll}
	x_1^{a_{i1}} \cdots  x_n^{a_{in}}, & 1\leqslant i\leqslant r \\
	x_1^{a_{i1}} \cdots  x_n^{a_{in}}(x_{n-r+i}+\alpha_{n-r+i}), & r+1\leqslant i\leqslant \ell \\
	x_{n-r+i}, & \ell+1\leqslant i\leqslant m
	\end{array}
	\right.
	\end{equation}
	where $\det(a_{ij})_{\substack{1\leqslant i\leqslant r \\ 1\leqslant j\leqslant r}}\neq 0$, $\sum_{i=1}^{\ell}a_{ij}\neq 0$, for $r+1\leqslant j\leqslant n$, and $\sum_{j=1}^{n}a_{ij}\neq 0$, for $r+1\leqslant i\leqslant \ell$. After possibly reindexing the variables\footnote{See the proof of Theorem \ref{TF}.} $x_{n+1},\dots,x_{n-r+m}$, we can rewrite (\ref{tfeq1}) as
	\begin{equation}\label{tfeq2}
	y_i=\left\{
	\begin{array}{ll}
	x_1^{a_{i1}} \cdots  x_n^{a_{in}}, & 1\leqslant i\leqslant r \\
	x_1^{a_{i1}} \cdots  x_n^{a_{in}}(x_{n-r+m-\ell+i}+\alpha_{n-r+i}), & r+1\leqslant i\leqslant \ell \\
	x_{n-\ell+i}, & \ell+1\leqslant i\leqslant m.
	\end{array}
	\right.
	\end{equation}
	This can also be written as
	\begin{equation}\label{tfeq3}
	y_i=\left\{
	\begin{array}{ll}
	\delta_i x_1^{a_{i1}} \cdots  x_n^{a_{in}}, & 1\leqslant i\leqslant \ell \\
	\ \ \ x_{n-\ell+i}, & \ell+1\leqslant i\leqslant m
	\end{array}
	\right.
	\end{equation}
	where $\delta_1,\dots,\delta_{\ell}$ are units in $\hat{\mathcal O}_{X,p}$ defined by
	\begin{equation}
	\delta_i:=\prod_{t=r+1}^{\ell} (x_{n-r+m-\ell+t}+\alpha_{n-r+t})^{\epsilon_{it}}\ \ \mbox{ with }\ \ (\epsilon_{it})_{\substack{1\leqslant i\leqslant \ell \\ r+1\leqslant t\leqslant \ell}}=\left(\begin{array}{c}
	\textrm{O}_{r\times \ell-r}\\
	\hdotsfor[.5]{1}\\
	I_{\ell-r}
	\end{array}\right)\in\{0,1\}^{\ell\times \ell-r}.
	\end{equation}
	We also note that $Z$ is a $c$ codimensional $(\ell,\bar\ell)$--SV for $E$ at $q$, meaning that $Z$ makes SNCs with $E$ and it lies in exactly $\bar\ell$ irreducible components of $E$ which has local equation $y_1\cdots y_{\ell}=0$ at $q$. Clearly, it is possible to obtain permissible parameters for $Z$ at $q$ on $E$ by permuting $y_1,\dots,y_{\ell}$, and changing the parameters $y_{\ell+1},\dots,y_m$, if necessary. When we do this, we get permissible parameters $\mathbf{\tilde y}=(\tilde y_1,\dots,\tilde y_m)$ at $q$, which satisfies $\det J(\mathbf{\tilde y};\mathbf{y})\neq 0$ by the formal inverse function theorem, such that
	\begin{equation}
	\tilde y_1=\cdots=\tilde y_{\bar \ell}=\tilde y_{\ell+1}=\cdots=\tilde y_{\ell+(c-\bar \ell)}=0
	\end{equation}
	are local equations of $Z$ at $q$\footnote{If $\bar\ell=c$, local equations of $Z$ at $q$ are $\tilde y_1=\cdots=\tilde y_{\bar \ell}=0$.}. If we further set $\tilde x_{n-\ell+i} := \tilde y_i$, for $\ell+1\leqslant i\leqslant m$, we obtain
	\begin{equation}\label{tfeq4}
	\tilde y_i=\left\{
	\begin{array}{ll}
	\delta_i x_1^{a_{i1}} \cdots  x_n^{a_{in}}, & 1\leqslant i\leqslant \ell \\
	\ \ \ \tilde x_{n-\ell+i}, & \ell+1\leqslant i\leqslant m
	\end{array}
	\right.
	\end{equation}
	where $\mathrm{rank\,}(a_{ij})_{\substack{1\leqslant i\leqslant \ell \\ 1\leqslant j\leqslant n}}=r$, $\sum_{i=1}^{\ell}a_{ij}\neq 0$, for $1\leqslant j\leqslant n$, and $\sum_{j=1}^{n}a_{ij}\neq 0$, for $1\leqslant i\leqslant \ell$. To finish the proof, we need to show that $\mathbf{\tilde x}=(x_1,\dots,x_n,\tilde x_{n+1},\dots,\tilde x_{n+m-\ell},x_{n+m-\ell+1},\dots,x_d)$ is a system of permissible parameters at $p$. It is sufficient to observe that $\det J(\mathbf{\tilde x};\mathbf{x})\neq 0$. 
	Suppose that, for $\ell+1\leqslant i\leqslant m$, 
	\begin{equation}\label{expansion of tilde y}
	\tilde y_i=\sum_{k=1}^m \gamma_{ik}y_k + \mbox{ higher degree terms in }y_k\mbox{'s}
	\end{equation}
	for some $\gamma_{ik}\in\mathcal K$. After substituting (\ref{TFeq}) in (\ref{expansion of tilde y}), for $\ell+1\leqslant i\leqslant m$, we obtain the expansion of $\tilde x_{n-\ell+i}$ in terms of $x_1,\dots, x_d$ which enable us to compute the Jacobian matrix $J(\mathbf{\tilde x};\mathbf{x})=$
	$$
	\left(
	\begin{array}{c|c|c|c}
	I_{n} & \mathrm{O} & \mathrm{O}& \mathrm{O} \\
	\hdotsfor[.5]{4}\\
	\left(\frac{\partial\tilde x_{n-\ell+k}}{\partial x_j}\right)_{\substack{\ell+1\leqslant k\leqslant m \\ 1\leqslant j\leqslant n}} &
	\left(\frac{\partial\tilde x_{n-\ell+k}}{\partial x_j}\right)_{\substack{\ell+1\leqslant k\leqslant m \\ n+1\leqslant j\leqslant n-\ell+m}} & \left(\frac{\partial\tilde x_{n-\ell+k}}{\partial x_j}\right)_{\substack{\ell+1\leqslant k\leqslant m \\ n-\ell+m+1\leqslant j\leqslant n-r+m}} & \mathrm{O}  \\
	\hdotsfor[.5]{4}\\
	\mathrm{O} & \mathrm{O} & I_{\ell-r}  & \mathrm{O}\\
	\hdotsfor[.5]{4}\\
	\mathrm{O} & \mathrm{O}  & \mathrm{O} & I_{d-(n+m-r)} \\
	\end{array}
	\right).
	$$
	An easy computation shows that $\left(\frac{\partial\tilde x_{n-\ell+k}}{\partial x_j}\right)_{\substack{\ell+1\leqslant k\leqslant m \\ n+1\leqslant j\leqslant n-\ell+m}}=(\gamma_{ik})_{\substack{\ell+1\leqslant i\leqslant m \\\ell+1\leqslant k\leqslant m}}$, and hence after performing elementary row operations we obtain
	$$J(\mathbf{\tilde x};\mathbf{x})\leftrightarrow\left(
	\begin{array}{c|c|c}
	I_{n} & \mathrm{O} & \mathrm{O} \\
	\hdotsfor[.5]{3}\\
	\mathrm{O} &(\gamma_{ik})_{\substack{\ell+1\leqslant i\leqslant m \\\ell+1\leqslant k\leqslant m}} & \mathrm{O} \\
	\hdotsfor[.5]{3}\\
	\mathrm{O} & \mathrm{O} & I_{d-(n+m-\ell)} \\
	\end{array}
	\right).
	$$
	But $\mathrm{rank~}(\gamma_{ik})_{\substack{\ell+1\leqslant i\leqslant m \\\ell+1\leqslant k\leqslant m}}=m-\ell$ since
	\begin{equation*}
	\det J(\mathbf{\tilde y};\mathbf{y}) = \det \left(
	\begin{array}{c|c}
	I_{\ell} & \mathrm{O}_{\ell\times (m-\ell)} \\
	\hdotsfor[.5]{2}\\
	(\gamma_{ik})_{\substack{\ell+1\leqslant i\leqslant m \\1\leqslant k\leqslant \ell}} & (\gamma_{ik})_{\substack{\ell+1\leqslant i\leqslant m \\\ell+1\leqslant k\leqslant m}}
	\end{array}
	\right)  = \det \left(
	\begin{array}{c|c}
	I_{\ell} & \mathrm{O}_{\ell\times (m-\ell)} \\
	\hdotsfor[.5]{2}\\
	\mathrm{O}_{m-\ell\times \ell} & (\gamma_{ik})_{\substack{\ell+1\leqslant i\leqslant m \\\ell+1\leqslant k\leqslant m}}
	\end{array}
	\right)\neq 0.
	\end{equation*}
	Thus $J(\mathbf{\tilde x};\mathbf{x})$ is a full rank matrix and hence $\mathbf{\tilde x}$ is a permissible system of parameters at $p$. 
\end{proof}


\subsection{Principalization}
Let $\varphi:(X,U_{\a},D_{\a})_{\a\in I}\to (Y,V_{\a},E_{\a})_{\a\in I}$ be a locally toroidal morphism, and suppose $\pi:Y_1\to Y$ is a permissible blowup in the ERS of  
$\widetilde{\mathcal{E}}_0=\sum_{\alpha \in I}\overline{E}_{\alpha}$ with center $Z\subset Y$. The key observation is that indeterminacy of the rational map $\pi^{-1}\circ\varphi:X \dashrightarrow Y_1$ coincides with the locus of points where $\mathcal I_Z\mathcal O_X$ is not locally principal, i.e., 
$$
\mathbf{W_{Z}(X):=\{p\in X\ |\ \mathcal I_{Z}\mathcal{O}_{X,p} \mbox{\ \ \textbf{is not principal}}\}}.
$$
By \cite[Lemma 13.8]{C4}, $\mathcal I_Z\mathcal O_X$ can be factored as $\mathcal I_Z\mathcal O_X = \mathcal O_X(-F) \mathcal N$, where $F$ is an effective divisor on $X$\footnote{Here $\mathcal O_X(-F)$ is the sheaf associated to $-F$ which is invertible since $X$ is nonsingular (see \cite[Section?]{C4}).}, and $\mathcal N$ is an ideal sheaf on $X$ such that $\mathrm{Supp}\,(\mathcal O_X/\mathcal N)=W_{Z}(X)$\footnote{In the proof of Proposition \ref{NP Locus is SNC}, we will observe that $\mathcal I_Z\mathcal O_X$ is a locally monomial ideal sheaf. This implies that $\mathcal N$ is locally monomial as well, and $F$ is a SNC divisor}. We apply the principalization algorithm of ideal sheaves given in \cite[Theorem 6.35]{C2} to $\mathcal N$ in order to resolve the indeterminacy of $\pi^{-1}\circ\varphi$. The finite sequence of transformation constructed by this algorithm is called a \textbf{\textit{strong principalization}} of (the ideal sheaf $\mathcal N$ of) $W_Z(X)$.
%


\begin{Proposition}\label{NP Locus is SNC}
	Let $\varphi:(U,D) \to (V,E)$ be a toroidal morphism. Suppose that $Z\subset V$ is a $c$ codimensional $(\ell,\bar\ell)$--SV for $E$ at $q\in V$. The nonprincipal locus $W_Z(U)$ is a SNC subscheme of $U$ whose irreducible components, say $T$, make SNCs with $D$ and $2 \leqslant \mathrm{codim}_U T \leqslant \dim V$.
\end{Proposition}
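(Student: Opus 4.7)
The plan is to reduce the question to the local structure of monomial ideals via the toroidal normal form of Lemma~\ref{tf in terms of perpars for SVs}. Since $\mathcal I_Z\mathcal O_{U,p}=\mathcal O_{U,p}$ whenever $p\notin\varphi^{-1}(Z)$, one has $W_Z(U)\subseteq\varphi^{-1}(Z)$, so it suffices to analyze $W_Z(U)$ locally at each $p\in\varphi^{-1}(q)$. Fixing such a $p$ and letting $n$ be the number of irreducible components of $D$ passing through $p$, Lemma~\ref{tf in terms of perpars for SVs} supplies permissible parameters $y_1,\dots,y_m$ for $Z$ at $q$ (Definition~\ref{SV PerPar}) and permissible parameters $x_1,\dots,x_d$ at $p$ together with a toroidal expression $y_i=\delta_i x_1^{a_{i1}}\cdots x_n^{a_{in}}$ for $1\le i\le\ell$ and $y_i=x_{n-\ell+i}$ for $\ell<i\le m$, where the $\delta_i$ are units. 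Pulling back the local equations $y_1=\cdots=y_{\bar\ell}=y_{\ell+1}=\cdots=y_{\ell+c-\bar\ell}=0$ of $Z$ at $q$ and absorbing the units, the ideal $\mathcal I_Z\hat{\mathcal O}_{U,p}$ becomes the monomial ideal
\[
J_p=\langle x_1^{a_{i1}}\cdots x_n^{a_{in}}: 1\le i\le\bar\ell\rangle+\langle x_{n+1},\dots,x_{n+c-\bar\ell}\rangle.
\]

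Next I would invoke the factorization $\mathcal I_Z\mathcal O_U=\mathcal O_U(-F)\mathcal N$ of \cite[Lemma~13.8]{C4} together with the identity $W_Z(U)=\mathrm{Supp}(\mathcal O_U/\mathcal N)$. A brief case analysis on $\bar\ell$ versus $c$ shows that $\mathcal N_p$ is locally monomial with trivial $\gcd$: when $\bar\ell=c$ there are no linear generators and $F$ absorbs the monomial $\gcd$ of the $M_i=x_1^{a_{i1}}\cdots x_n^{a_{in}}$, leaving $\mathcal N_p=\langle M_1/\gcd,\dots,M_c/\gcd\rangle$; when $\bar\ell<c$ the linear terms $x_{n+1},\dots,x_{n+c-\bar\ell}$ are pairwise coprime and coprime to the $M_i$, which forces $F=0$ near $p$ and $\mathcal N_p=J_p$.

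The key geometric observation is that the zero locus of a monomial ideal with trivial $\gcd$ decomposes as a union of coordinate subvarieties $V(x_{j_1},\dots,x_{j_r})$, one per inclusion-minimal ``hitting set'' $\{j_1,\dots,j_r\}\subseteq\{1,\dots,n+c-\bar\ell\}$ meeting every generator. Triviality of $\gcd(\mathcal N_p)$ rules out hitting sets of size one, yielding $\mathrm{codim}_U T\ge 2$. Conversely, a minimal hitting set has size at most the number of generators, which is at most $c$, so $\mathrm{codim}_U T\le c\le m=\dim V$. Each component $T$ is thus a nonsingular coordinate subspace in the parameters $x_1,\dots,x_d$, and the collection of these components is pairwise transversal, which gives the SNC structure on $W_Z(U)$. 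Moreover, since $D$ has local equation $x_1\cdots x_n=0$ at $p$ and the indices $j_s$ lie in $\{1,\dots,d\}$, each component makes SNC with $D$ in the sense of Definition~\ref{SNCs}(2).

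The main obstacle I anticipate is the bookkeeping needed to handle both subcases $\bar\ell<c$ and $\bar\ell=c$ uniformly, and to justify that the formal toroidal description yields an honest SNC subscheme of $U$ rather than a purely formal one; the latter should follow from the faithful flatness of completion (so non\-principality is detected formally) and from the fact that the component equations are drawn from the regular parameters $x_1,\dots,x_d$ at $p$.
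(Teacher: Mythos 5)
Your proposal follows essentially the same route as the paper's proof: reduce to the toroidal normal form of Lemma~\ref{tf in terms of perpars for SVs}, write down the pullback ideal $J_p$, factor $\mathcal I_Z\mathcal O_U=\mathcal O_U(-F)\mathcal N$, and read off the irreducible decomposition of the monomial ideal $\mathcal N_p$ combinatorially. Your ``minimal hitting set'' description of the minimal primes of a monomial ideal is exactly what the paper extracts from the Herzog--Hibi decomposition into irreducible monomial ideals \cite[Theorem 1.3.1]{HH}, and the codimension bounds $2\leqslant\mathrm{codim}_U T\leqslant c\leqslant\dim V$ come out the same way.

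The gap is precisely the one you flag at the end, and it is not filled by the remark you make there. Lemma~\ref{tf in terms of perpars for SVs} supplies only \emph{formal} permissible parameters $x_1,\dots,x_d\in\hat{\mathcal O}_{U,p}$ and unit series $\delta_i\in\hat{\mathcal O}_{U,p}$; but the SNC condition in Definition~\ref{SNCs} is stated with a regular system of parameters in $\mathcal O_{U,p}$, and likewise ``$T$ makes SNCs with $D$'' is an algebraic statement. Faithful flatness of $\hat{\mathcal O}_{U,p}$ over $\mathcal O_{U,p}$ does let you detect non-principality formally, so it locates $W_Z(U)$ correctly as a closed set; but it does not by itself produce algebraic coordinates in which $\mathcal N_p$ is monomial, which is what nonsingularity and pairwise transversality of the components require. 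The paper fills this in with a genuine argument: it takes algebraic regular parameters $\bar x_1,\dots,\bar x_d\in\mathcal O_{U,p}$ in which $D$ has local equation $\bar x_1\cdots\bar x_n=0$ (these exist since $D$ is SNC algebraically), compares them with the formal $x_j$ via unit series $\gamma_j\in\hat{\mathcal O}_{U,p}$, and then shows the resulting combined units $\bar\delta_i=y_i/(\bar x_1^{a_{i1}}\cdots\bar x_n^{a_{in}})$ actually lie in $QF(\mathcal O_{U,p})\cap\hat{\mathcal O}_{U,p}=\mathcal O_{U,p}$ by \cite[Lemma 2.1]{C1}. This descent step is what your sketch would need to make precise before the combinatorial part can be applied to $\mathcal N_p\subset\mathcal O_{U,p}$ rather than to its completion. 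A minor secondary point: the case $\ell=0$ (that is, $q\notin E$, a $(0,0)$--SV) falls outside the hypotheses of Lemma~\ref{tf in terms of perpars for SVs} and must be treated separately, using the smoothness of $\varphi$ away from $D$; the paper handles this case at the outset, while your sketch applies the lemma uniformly.
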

\begin{proof}
	We will first observe that $\mathcal I_Z\mathcal O_U$ is a locally monomial ideal sheaf, i.e., for all $p\in U$, $\mathcal I_Z\mathcal O_{U,p}$ is generated by monomials in an appropriate regular system of parameters in $\mathcal O_{U,p}$.
	
	We note that if $\ell=0$ 
	, i.e., $Z$ is a $(0,0)$--SV, then $\varphi$ is smooth at $p\in\varphi^{-1}(Z)$ since it is toroidal. So there exists a regular system of parameters $\mathbf{y}=(y_1,\dots,y_m)$ in $\mathcal O_{V,q}$ such that $y_1=\cdots=y_c=0$ are local equations of $Z$ at $q$, and $\mathbf{y}$ forms a part of a regular system of parameters, say $\mathbf{x}=(x_1,\dots,x_d)$, in $\mathcal O_{U,p}$. Thus, after possibly permuting $x_1,\dots,x_d$, we have
	\begin{equation*}\tag{$\mathfrak{np}\,(0,0)$}\label{0,0}
	\mathcal I_Z\mathcal O_{U,p}=\langle x_1,\dots, x_c \rangle .
	\end{equation*}
	Now suppose that $\ell\geqslant 1$, i.e., $q\in Z\cap E$, and so $\bar\ell\geqslant 0$. By Lemma \ref{tf in terms of perpars for SVs}, there exist permissible parameters $y_1,\dots,y_m$ for $Z$ at $q$ on $E$ and permissible parameters $x_1,\dots,x_d$ at $p\in\varphi^{-1}(q)$, and there exist an exponent matrix $(a_{ij})_{\substack{1\leqslant i\leqslant \ell \\ 1\leqslant j\leqslant n}}\in\mathbb N^{\ell\times n}$ satisfying (\ref{ex matrix condition}), and units $\delta_1,\dots,\delta_{\ell}$ in $\hat{\mathcal O}_{U,p}$ such that
	\begin{equation*}
	y_i=\left\{
	\begin{array}{ll}
	\delta_i x_1^{a_{i1}} \cdots  x_n^{a_{in}}, & 1\leqslant i\leqslant \ell \\
	\ \ \ x_{n-\ell+i}, & \ell+1\leqslant i\leqslant m.
	\end{array}
	\right.
	\end{equation*}
	Recall from the Definition \ref{SV PerPar} that $y_1\cdots y_{\ell}=0$ is a local equation of $E$ at $q$, and local equations of $Z$ at $q$ are
	$$
	y_1=\cdots=y_{\bar \ell}= y_{\ell+1}=\cdots= y_{\ell+(c-\bar\l)}=0.
	$$
	Thus
	\begin{equation}\label{IZ2}
	\mathcal I_Z\hat{\mathcal O}_{U,p}=\langle \delta_1 x_1^{a_{11}} \cdots  x_n^{a_{1n}}, \dots,\delta_{\bar\ell} x_1^{a_{\bar\ell 1}} \cdots  x_n^{a_{\bar\ell n}}, x_{n+1},\dots,x_{n+(c-\bar\ell)} \rangle.
	\end{equation}
	We also note that $D$ is a SNC divisor\footnote{We may assume that $D$ is reduced.} on $U$ and there exist a regular system of parameters $\bar{\mathbf x}=(\bar x_1,\dots,\bar x_d)$ in $\mathcal O_{U,p}$ and $e\in\mathbb N$ such that $\bar x_1\cdots\bar x_e=0$ is a local equation of $D$ at $p$. Recall that $p$ is an $n$--point and $x_1\cdots x_n=0$ is a local equation of $D$ in $\hat{\mathcal O}_{U,p}$. So there exists a unit series $\gamma \in \hat{\mathcal O}_{U,p}$ such that 
	$$
	\bar x_1\cdots\bar x_e=\gamma\, x_1\cdots x_n.
	$$
	Since $x_j$'s and $\bar x_j$'s are irreducible in $\hat{\mathcal O}_{U,p}$, we have $e=n$, and there exist unit series $\gamma_j \in \hat{\mathcal O}_{U,p}$ such that, after possibly reindexing $\bar x_j$'s, for all $1\leqslant j\leqslant n$ we have $x_j=\gamma_j\bar x_j$. Thus, for $1\leqslant i\leqslant \ell$, 
	$$
	y_i=\delta_i \gamma_1^{a_{i1}}\cdots\gamma_n^{a_{in}}\bar x_1^{a_{i1}}\cdots\bar x_n^{a_{in}},
	$$
	where $\bar\delta_i:=\delta_i \gamma_1^{a_{i1}}\cdots\gamma_n^{a_{in}}$ is a unit in $\hat{\mathcal O}_{U,p}$. In fact, $\bar\delta_i$'s are units in $\mathcal O_{U,p}$ since, for $1\leqslant i\leqslant \ell$,
	\begin{equation}
	\bar\delta_i=\frac{y_i}{\bar x_1^{a_{i1}}\cdots\bar x_n^{a_{in}}}\in QF(\mathcal O_{U,p})\cap\hat{\mathcal O}_{U,p}=\mathcal O_{U,p}\mbox{ by \cite [Lemma 2.1]{C1}}.
	\end{equation}
	So, for $1\leqslant i\leqslant \ell$, $\bar y_i:={\bar\delta_i}^{-1}y_i \in \mathcal O_{V,q}$ and we have 
	$$
	\bar y_i = \bar x_1^{a_{i1}}\cdots\bar x_n^{a_{in}}.
	$$
	In addition, after possibly permuting $\bar x_{n+1},\dots,\bar x_d$,
	$$
	\bar{\bar{\mathbf x}}:=(\bar x_1,\dots,\bar x_{n},x_{n+1},\dots,x_{n+(m-\l)},\bar x_{n+m-\l+1},\dots,\bar x_d)
	$$
	is a regular system of parameters in $\hat{\mathcal O}_{U,p}$, and so the ideal
	$$
	\langle \bar x_1,\dots,\bar x_{n},x_{n+1},\dots,x_{n+(m-\l)},\bar x_{n+m-\l+1},\dots,\bar x_d \rangle\hat{\mathcal O}_{U,p}
	$$
	is the maximal ideal of $\hat{\mathcal O}_{U,p}$. We note that for $\ell+1\leqslant i\leqslant m$, $y_i=x_{n-\l+i}\in\mathcal O_{U,p}$, and by faithful flatness of $\hat{\mathcal O}_{U,p}$ over $\mathcal O_{U,p}$
	$$
	\langle \bar x_1,\dots,\bar x_{n},x_{n+1},\dots,x_{n+(m-\l)},\bar x_{n+m-\l+1},\dots,\bar x_d \rangle \mathcal O_{U,p}
	$$
	is also the maximal ideal of $\mathcal O_{U,p}$. Hence $\bar{\bar{\mathbf x}}$ is a regular system of parameters in $\mathcal O_{U,p}$. Rewriting (\ref{IZ2}) in terms of $\bar{\bar{\mathbf x}}$ and $\bar{\mathbf y}:=(\bar y_1,\dots,\bar y_{\ell},y_{\ell+1},\dots,y_m)$, we obtain
	\begin{equation}\label{IZ2'}
	\mathcal I_Z \mathcal O_{X,p}=\langle \bar x_1^{a_{11}} \cdots  \bar x_n^{a_{1n}}, \dots,\bar x_1^{a_{\bar\ell 1}} \cdots  \bar x_n^{a_{\bar\ell n}}, x_{n+1},\dots,x_{n+(c-\bar\l)} \rangle.
	\end{equation}
	Writing $M:=\gcd(\bar x_1^{a_{11}} \cdots  \bar x_n^{a_{1n}}, \dots,\bar x_1^{a_{\bar\ell 1}} \cdots  \bar x_n^{a_{\bar\ell n}}, x_{n+1},\dots,x_{n+(c-\bar\l)})$, we have that 
	$$
	\mathcal N \mathcal O_{X,p} =\mathcal I_Z \mathcal O_{X,p} : M.
	$$
	Therefore by \cite[Proposition 1.2.2]{HH}, if $\bar\ell=c$,
	\begin{equation}\label{Np1}
	\mathcal N \mathcal O_{X,p} =
	\langle \bar x_1^{\bar a_{11}} \cdots  \bar x_n^{\bar a_{1n}}, \dots,\bar x_1^{\bar a_{\bar\ell 1}} \cdots \bar x_n^{\bar a_{\bar\ell n}} \rangle,
	\end{equation}
	where $\bar a_{ij}=a_{ij}-\min\{a_{ij}|1\leqslant i \leqslant \bar\ell\}$, for $1\leqslant i \leqslant \bar\ell$ and $1\leqslant j \leqslant n$; and if $\bar\ell<c$,
	\begin{equation}\label{Np2}
	\mathcal N \mathcal O_{X,p} = \langle \bar x_1^{a_{11}} \cdots  \bar x_n^{a_{1n}}, \dots,\bar x_1^{a_{\bar\ell 1}} \cdots  \bar x_n^{a_{\bar\ell n}}, x_{n+1},\dots,x_{n+(c-\bar\ell)} \rangle.
	\end{equation}
	Thus $\mathcal N$ is a locally monomial ideal sheaf in $\mathcal O_U$, i.e., $W_Z(U)$ is a SNC subscheme of $U$.
	
	In addition, using \cite[Theorem 1.3.1]{HH}, the monomial ideals in 
	(\ref{Np1}) and (\ref{Np2}) can be written as intersection of monomial ideals generated by pure powers of the variables, i.e., irreducible monomial ideals\footnote{A monomial ideal is called \textbf{\textit{irreducible}} if it cannot be written as proper intersection of two other monomial ideals.}. So the defining ideal of an irreducible component $T$ of $W_Z(U)$ containing $p$ has one of the following forms:
	\subsubsection*{np(1)} If $\bar\ell=c$ and (\ref{Np1}) holds, there exist $k\in\mathbb N$\footnote{$k\leqslant \min\{n,\bar\ell\}$}, distinct $j_1,\dots,j_k\in [n]$ and $i_1,\dots,i_k\in[\bar\ell]$ such that for all $t\in[k]$, $\bar a_{i_tj_t}=a_{i_tj_t}-m_{j_t}\neq 0$ and
	\begin{equation*}\tag{$\mathfrak{np}\,(1)$}\label{np1}
	\sqrt{\mathcal I_T\mathcal O_{X,p}}=\langle \bar x_{j_1},\dots, \bar x_{j_k} \rangle;
	\end{equation*}
	\subsubsection*{np(2)} if $\bar\ell<c$ and (\ref{Np2}) holds, there exist $k\in\mathbb N$, distinct $j_1,\dots,j_k\in [n]$ and $i_1,\dots,i_k\in[\bar\ell]$ such that for all $t\in[k]$, $ a_{i_tj_t}\neq 0$ and
	\begin{equation*}\tag{$\mathfrak{np}\,(2)$}\label{np2}
	\sqrt{\mathcal I_T\mathcal O_{X,p}}=\langle \bar x_{j_1},\dots, \bar x_{j_k}, x_{n+1},\dots,x_{n+(c-\bar\ell)} \rangle.
	\end{equation*}
	This shows that irreducible components of $W_Z(U)$, say $T$, make SNCs with $D$, and clearly, $$
	2 \leqslant \mathrm{codim}_U T \leqslant \mathrm{codim}_V Z\leqslant \dim V.
	$$
\end{proof}

\subsection{Quasi--toroidal Forms}
We are now ready to study the effect of a principalization sequence on toroidal forms in the following lemmas.
\begin{Lemma}\label{QTF}
	Suppose that $\varphi:(U,D)\to (V,E)$ is a toroidal morphism. Let $q\in E$ be an $\ell$--point, and let $Z\subset V$ be a $c$ codimensional $(\ell,\bar\ell)$--SV for $E$ at $q$. Suppose that
	\begin{equation}\label{P}
	U_{n_0}\xrightarrow{\lambda_{n_0}}U_{n_0-1}\cdots\xrightarrow{}U_k\xrightarrow{\lambda_k}U_{k-1}\xrightarrow{}\cdots\to U_1\xrightarrow{\lambda_1}U_0=U
	\end{equation}
	is a strong principalization sequence of 
	$W_Z(U)$. We write $\Lambda_k=\lambda_1\circ\cdots\circ\lambda_k$, for $k \geqslant 1$, and $\Lambda_0=id_U$. Then:
	\begin{enumerate}
		\item\label{QTF part21} For all $0 \leqslant k\leqslant n_0$, $D_k=\Lambda_k^*(D)_{\mathrm{red}}$ is a SNC divisor on $U_k$.\\
		
		\item\label{QTF part2} For all $0 \leqslant k\leqslant n_0$, if $p\in (\varphi\circ\Lambda_k)^{-1}(q)$ is an $n$-point for $D_k$, there exist permissible parameters $x_1,\dots,x_d$ at $p$ on $D_k$ 
		, permissible parameters $y_1,\dots,y_m$ for $Z$ at $q$ on $E$, and there exist a matrix $\mathbf{a}=(a_{ij})_{\substack{1\leqslant i\leqslant \ell+s \\ 1\leqslant j\leqslant n}}\in\mathbb N^{(\ell+s)\times n}$, and unit series $\delta_1,\dots,\delta_{\ell+s}\in\hat{\mathcal O}_{U_k,p}$, where $s:=c-\bar\ell\geqslant 0$, and there exist $\beta_{n+1},\dots,\beta_{n+s}\in\mathcal K$ such that one of the following forms holds:
		\begin{equation}\label{QTF1}
		y_i=\left\{
		\begin{array}{ll}
		\delta_i x_1^{a_{i1}} \cdots  x_n^{a_{in}}, & 1\leqslant i\leqslant \ell \\
		\delta_i x_1^{a_{i1}} \cdots  x_n^{a_{in}}(x_{n-\ell+i}+\beta_{n-\ell+i}), & \ell+1\leqslant i\leqslant \ell+s \\
		\ \ \ x_{n-\ell+i}, & \ell+s+1\leqslant i\leqslant m
		\end{array}
		\right.
		\end{equation}
		where for all $j\in [n]$, $\sum_{i=1}^{\ell}a_{ij}\neq 0$, and for all $i\in [\ell]$, $\sum_{j=1}^{n}a_{ij}\neq 0$; or
		\begin{equation}\label{QTF2}
		y_i=\left\{
		\begin{array}{ll}
		\delta_i x_1^{a_{i1}} \cdots  x_n^{a_{in}}, & 1\leqslant i\leqslant \ell+1 \\
		\delta_i x_1^{a_{i1}} \cdots  x_n^{a_{in}}(x_{n-\ell+i}+\beta_{n-\ell+i}), & \ell+2\leqslant i\leqslant \ell+s \\
		\ \ \ x_{n-\ell+i}, & \ell+s+1\leqslant i\leqslant m
		\end{array}
		\right.
		\end{equation}
		where for all $j\in[n]$, $\sum_{i=1}^{\ell}a_{ij}\neq 0$, and for all $i\in[\ell+1]$, $\sum_{j=1}^{n}a_{ij}\neq 0$. Furthermore, in both cases, if $s>0$, 
		\begin{equation}\label{QTF condition}
		\mbox{for all } j\in [n],\ \ a_{(\ell+1)j}=\cdots=a_{(\ell+s)j}=\min\{a_{ij}\}_{i\in I},
		\end{equation}
		where $I:= [\bar\ell]\cup \{\ell+1,\dots,\ell+s\}$.\\
		
		\item\label{QTF part3} For all $0 \leqslant k < n_0$, the nonprincipal locus $W_Z(U_k)$ is a SNC subscheme of $U_k$ whose irreducible components, say $W_k\subset W_Z(U_k)$, make SNCs with $D_k$. More precisely, if $p\in W_Z(U_k)$, for some $k\geqslant 0$, then (\ref{QTF1}) with $\beta_{n+1}=\cdots=\beta_{n+s}=0$ holds at $p$, and if $\bar\ell=c$, 
		\begin{equation}
		\mathcal N_{k,p}:=\mathcal N \hat{\mathcal O}_{U_k,p} = \langle  x_1^{\bar a_{11}} \cdots  x_n^{\bar a_{1n}}, \dots, x_1^{\bar a_{\bar\ell 1}} \cdots  x_n^{\bar a_{\bar\ell n}} \rangle
		\end{equation}
		where $\bar a_{ij}=a_{ij}-\min\{a_{ij}|i\in [\bar\ell]\}$, for $1\leqslant i \leqslant \bar\ell$ and $1\leqslant j \leqslant n$; and if $\bar\ell<c$,
		\begin{equation}
		\mathcal N_{k,p} = \langle x_1^{a_{11}} \cdots  x_n^{a_{1n}}, \dots, x_1^{a_{\bar\ell 1}} \cdots  x_n^{a_{\bar\ell n}}, x_{n+1},\dots,x_{n+s} \rangle.
		\end{equation}
	\end{enumerate}
\end{Lemma}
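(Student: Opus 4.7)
The plan is to establish all three items simultaneously by induction on the length $k$ of the principalization sequence.

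For the base case $k=0$, part (\ref{QTF part21}) is immediate since $D_0=D$ is SNC by hypothesis. For part (\ref{QTF part2}) at $k=0$, Lemma \ref{tf in terms of perpars for SVs} produces parameters realising the form (\ref{TF'}). This is the special case of (\ref{QTF1}) obtained by setting $\delta_i=1$, $a_{ij}=0$ for $\ell+1\leqslant i\leqslant\ell+s$, and $\beta_{n-\ell+i}=0$; condition (\ref{QTF condition}) then holds trivially because the row minima $\min\{a_{ij}\}_{i\in I}$ are all zero. For part (\ref{QTF part3}) at $k=0$, the description of $W_Z(U)$ and the explicit forms (\ref{Np1}) and (\ref{Np2}) of $\mathcal{N}_{0,p}$ are exactly Proposition \ref{NP Locus is SNC}.

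For the inductive step, suppose the three items hold on $U_k$, and consider the blowup $\lambda_{k+1}:U_{k+1}\to U_k$ whose center $C_k$ is, by the principalization algorithm, a nonsingular irreducible subvariety of $\mathbf{Max}\,\nu_{\mathcal N_k}\subseteq W_Z(U_k)$ that makes SNCs with $D_k$. From the inductive version of part (\ref{QTF part3}) together with the SNC property of $W_Z(U_k)$, at each $p\in C_k$ the ambient parameters $x_1,\ldots,x_d$ of the form (\ref{QTF1}) (with $\beta_{n-\ell+i}=0$) can be chosen so that local equations of $C_k$ at $p$ are obtained by setting some subset of the $x_j$'s equal to zero. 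This reduces the local analysis of $\lambda_{k+1}$ over $p$ to the standard blowup computation (\ref{blup eqs}): at each $\tilde p\in\lambda_{k+1}^{-1}(p)$ we obtain new regular parameters $\tilde x_1,\ldots,\tilde x_d$ in $\mathcal O_{U_{k+1},\tilde p}$, a local equation of $D_{k+1}$ at $\tilde p$ that is again a product of these parameters (establishing part (\ref{QTF part21})), and after substitution each $y_i$ becomes a product of powers of the $\tilde x_j$'s, possibly multiplied by at most one factor of the form $\tilde x_j+\alpha$ for some constant $\alpha$. Whether $\alpha$ is zero dictates whether $\tilde x_j$ has become a new exceptional (divisor) parameter or has remained a transversal one, and this is precisely what distinguishes the two possible normal forms (\ref{QTF1}) and (\ref{QTF2}) at $\tilde p$.

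Part (\ref{QTF part3}) at level $k+1$ follows by computing the weak transform $\mathcal N_{k+1,\tilde p}$ directly: starting from the monomial presentation of $\mathcal N_{k,p}$ supplied by the induction hypothesis and substituting (\ref{blup eqs}), one factors out the common monomial corresponding to the pulled-back exceptional divisor, and recognises the quotient as an ideal of the same monomial shape (\ref{Np1}) or (\ref{Np2}). In particular $W_Z(U_{k+1})$ is again SNC and makes SNCs with $D_{k+1}$. The hard part of the argument will be the combinatorial bookkeeping in the inductive step: verifying that condition (\ref{QTF condition})---the statement that the rows indexed by $\ell+1,\ldots,\ell+s$ realise the column minima of the exponent matrix---is preserved under every possible choice of blowup chart, and correctly identifying in which charts the form transitions from (\ref{QTF1}) to (\ref{QTF2}) and in which charts the $\beta_{n-\ell+i}$ remain zero (so that $\tilde p$ stays in $W_Z(U_{k+1})$) versus become nonzero.
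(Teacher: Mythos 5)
Your high-level strategy — induction on the length $k$ of the principalization sequence, a base case via Lemma \ref{tf in terms of perpars for SVs} and Proposition \ref{NP Locus is SNC}, and an inductive step built on the blowup computation (\ref{blup eqs}) — is exactly the paper's route. The base case is handled correctly, including the observation that (\ref{TF'}) is (\ref{QTF1}) with the $\ell{+}1,\ldots,\ell{+}s$ rows of $\mathbf a$ equal to zero and all $\beta$'s zero, which makes condition (\ref{QTF condition}) trivial.

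There are, however, two genuine gaps. First, the mechanism you give for distinguishing (\ref{QTF1}) from (\ref{QTF2}) is wrong: you say the transition is governed by whether the constant $\alpha$ in a factor $\tilde x_j+\alpha$ is zero, but $\beta_{n+1}$ appears in (\ref{QTF1}) as an arbitrary (possibly nonzero) scalar, so the vanishing or nonvanishing of these constants only affects whether $\tilde p$ remains in $W_Z(U_{k+1})$, not which normal form holds. What actually determines the form is \emph{which chart of the blowup} $\tilde p$ lies in, i.e., whether the new exceptional coordinate $x_{j_0}$ is one of the divisorial variables $\bar x_1,\ldots,\bar x_{e-s}$ (staying in (\ref{QTF1})) or the transversal variable $\bar x_{n+1}$ associated to $y_{\ell+1}$, which gets absorbed into the monomial part of $y_{\ell+1}$ and produces (\ref{QTF2}). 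This is exactly the case split $j_0=1$ versus $j_0=n+1$ in the paper's proof.

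Second, the combinatorial verification that you flag as ``the hard part'' is not deferred in the paper — it is the proof. In particular, verifying that $\sum_{i=1}^\ell a_{ij}>0$ for all $j$ after blowup, and that (\ref{QTF condition}) is preserved (i.e., the $\ell{+}1,\ldots,\ell{+}s$ rows continue to realize the column minima over $I=[\bar\ell]\cup\{\ell+1,\ldots,\ell+s\}$), requires the nontrivial observation of Remark \ref{permissible center} about the matrix $\mathbf w$ of the center $T_k$: the normalized matrix $\overline{\mathbf w}$ has no zero row and no zero column, which forces each $i\in[\bar\ell]$ to contribute a strictly positive exponent $a_{ik_i}>\mathrm{a}_{k_i}$ in the blown-up direction, and then a careful summation argument shows the minimum in the new column is realized exactly where claimed. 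Without this ingredient your plan does not close. A complete proof must also carry out the explicit reindexings (the maps $\iota$ and $\iota'$ in the paper) to identify the new $n$-count $N$ or $N'$ and to put the unit factors into $\delta'_i$ in a way that the variables $\mathrm{x}_1,\ldots,\mathrm{x}_{N+m-\ell}$ do not appear in them.
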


\begin{proof}
	We will prove the lemma by induction on the length $k$ of the principalization sequence of $\mathcal I_Z\mathcal O_U$. When $k=0$, Theorem \ref{TF} and Lemma \ref{NP Locus is SNC} tell us that the claims hold on $U$\footnote{We note that $\varphi:(U,D)\to (V,E)$ is toroidal and the toroidal form (\ref{TFeq}) is just (\ref{QTF1}) with $a_{(\ell+t)j}=0$ for $t\in[s]$ and $j\in[n]$, and $\beta_{n+t}=0$ for $t\in [s]$.} and we take this as the base case for our induction. Suppose that $p\in(\varphi\circ\Lambda_k)^{-1}(q)=\lambda_k^{-1}((\varphi\circ\Lambda_{k-1})^{-1}(q))$ for some $k\geqslant 0$, and the claims of the theorem hold on $U_{k-1}$. Let $T_{k-1}\subset W_Z(U_{k-1})$ be the center of blowing up $\lambda_{k}:U_k\to U_{k-1}$\footnote{We note that the center $T_{k-1}$ of blowing up $\lambda_{k}:U_k\to U_{k-1}$ may be a proper irreducible nonsingular subvariety of an irreducible component $W_{k-1}$ of $W_Z(U_{k-1})$}. Since $T_{k-1}$ is nonsingular and makes SNCs with $D_{k-1}$, the divisor $D_k=\lambda_k^{-1}(D_{k-1})$ is a SNC divisor on $U_k$.
	
	To complete the proof, it suffices to show that (\ref{QTF part2}) holds for $p$, then the argument of Lemma \ref{NP Locus is SNC} can be adopted easily to show (\ref{QTF part3}). Since $\lambda_k$ is an isomorphism out of its center, we may assume that $p\in\lambda_k^{-1}(T_{k-1})$, i.e., $\bar p=\lambda_k(p)\in T_{k-1}$. Suppose that $\bar p$ is an $n$--point for $D_{k-1}$. So by the induction hypothesis, (\ref{QTF part3}) holds at $\bar p$ and there exist unit series $\bar\delta_i\in\hat{\mathcal O}_{U_{k-1},\bar p}$, permissible parameters $y_1,\dots,y_m$ for $Z$ at $q$ on $E$, permissible parameters $\bar x_1,\dots,\bar x_d$ at $\bar p$ for $D_{k-1}$, and an exponent matrix $\mathbf{a}=(a_{ij})_{\substack{1\leqslant i\leqslant \ell+s \\ 1\leqslant j\leqslant n}}\in\mathbb N^{(\ell+s)\times n}$ satisfying (\ref{ex matrix condition}) such that
	\begin{equation}\label{np proof}
	y_i=\left\{
	\begin{array}{ll}
	\bar\delta_i \bar x_1^{a_{i1}} \cdots  \bar x_n^{a_{in}}, & 1\leqslant i\leqslant \ell \\
	\bar\delta_i \bar x_1^{a_{i1}} \cdots  \bar x_n^{a_{in}}\bar x_{n-\ell+i}, & \ell+1\leqslant i\leqslant \ell+s \\
	\ \ \ \bar x_{n-\ell+i}, & \ell+s+1\leqslant i\leqslant m
	\end{array}
	\right.
	\end{equation}
	where $s=c-\bar\ell\geqslant 0$, and recall that 
	$$
	y_1=\cdots=y_{\bar\ell}=y_{\ell+1}=\cdots=y_{\ell+s}=0
	$$
	are local equations of $Z$ at $q$. Without loss of generality, we may assume that local equations of $T_{k-1}$, the center of blowing up $\lambda_k:U_k\to U_{k-1}$, at $\bar p$ are 
	$$
	\bar x_1=\cdots=\bar x_{e-s}=\bar x_{n+1}=\cdots=\bar x_{n+s}=0
	$$
	where $e$ denotes the codimension of $T_{k-1}$ in $U_{k-1}$. By \cite[Theorem 10.19]{C4}, there exist regular parameters $x_1,\dots,x_d\in\hat{\mathcal O}_{U_k,p}$ at $p\in \lambda_k ^{-1}(\bar p)$, and there exist $j_0\in J:=[e-s]\cup \{n+1,\dots,n+s\}$ and $\beta_j\in\mathcal K$, for each $j\in J$, such that
	\begin{equation}\label{blupeqs}
	\bar x_j=\left\{
	\begin{array}{ll}
	x_{j_0}( x_j+\beta_j) & \hbox{for $ j\in J$ and $j\neq j_0$,} \\
	x_{j_0} & \hbox{for $j=j_0$,} \\
	x_j & \hbox{for $j\in [d]\setminus J$,}
	\end{array}
	\right.
	\end{equation}
	and $x_{j_0}=0$ is a local equation of the exceptional divisor of $\lambda_k$ at $p$. To obtain all the possible forms of $\varphi\circ\Lambda_k(p)$, it suffices to  consider $j_0=1$ and $j_0=n+1$.\\
	
	\textbf{Case 1: $j_0=1$.} First suppose there exist regular parameters $x_1,\dots,x_d\in\hat{\mathcal O}_{U_k,p}$ and $\beta_j\in\mathcal K$, $j\in J$, such that (\ref{blupeqs}) holds at $p$ and $x_1=0$ is a local equation of the exceptional divisor of $\lambda_k$. Since $\bar x_1 \cdots \bar x_n=0$ is a local equation of $D_{k-1}$ at $\bar p$, so
	$$x_1\times \prod_{2\leqslant j\leqslant e-s} x_1(x_j+\beta_j)\times \prod_{e-s+1\leqslant j\leqslant n} x_j=0$$
	is a local equation of $\lambda_k^*(D_{k-1})$. Let 
	\begin{align}
	\label{J1} J_1 & :=\{j|2\leqslant j\leqslant e-s\mbox{ and }\beta_j=0\},\\
	\label{J2} J_2 & :=\{j|2\leqslant j\leqslant e-s\mbox{ and }\beta_j\in\mathcal K^{\times}\}
	\end{align}
	and suppose that $j_1:=|J_1|+1$, so that $1\leqslant j_1\leqslant e-s$ since $0\leqslant |J_1|\leqslant e-s-1$. After possibly permuting $x_2,\dots,x_{e-s}$, we may assume that $\beta_2=\cdots=\beta_{j_1}=0$ and $\beta_{j_1+1},\dots,\beta_{e-s}\in\mathcal K^{\times}$. Hence 
	$$x_1 x_2 \cdots x_{j_1} x_{e-s+1}\cdots x_n=0$$
	is a local equation of $D_k=\lambda_k^*(D_{k-1})_{\rm red}$, and writing $N:=n-(e-s-j_1)$, $p$ is an $N$--point for $D_k$. We note that $n-e+s+1 \leqslant N\leqslant n$. Now by substituting (\ref{blupeqs}) in (\ref{np proof}), for $1\leqslant i\leqslant \ell$, we obtain
	$$ 
	y_i=\delta_i \times x_1^{\sum_{j=1}^{e-s} a_{ij}} \times \prod_{j\in J_1} x_j^{a_{ij}} \times \prod_{j=e-s+1}^n  x_j^{a_{ij}} \times \prod_{j\in J_2}(x_j+\beta_j)^{a_{ij}}, 
	$$
	for $\ell+1\leqslant i\leqslant \ell+s$, we have
	$$
	y_i=\delta_i \times x_1^{1+\sum_{j=1}^{e-s} a_{ij}} \times \prod_{j\in J_1} x_j^{a_{ij}}  \times \prod_{j=e-s+1}^{n}x_j^{a_{ij}} \times  \prod_{j\in J_2}(x_j+\beta_j)^{a_{ij}} \times (x_{n-\ell+i}+\beta_{n-\ell+i})
	$$
	where $\beta_{n-\ell+i}\in\mathcal K$, and finally, $y_i=x_{n-\ell+i}$ for $\ell+s+1\leqslant i\leqslant m$. Here we abuse notation and we use the same symbol for unit series $\delta_i$'s, in which the variables $\bar x_1,\dots,\bar x_{n+(m-\l)}$ do not appear, before and after the substitution.
	
	We then reindex $x_j$'s and $a_{ij}$'s as follows. Consider the map $\iota:[d]\to [d]$ defined by 
	\begin{equation}
	\iota(j)=\begin{cases}
	j & \hbox{for } 1\leqslant j\leqslant j_1, \\
	j+(e-s-j_1) & \hbox{for } j_1+1\leqslant j\leqslant d-(e-s-j_1), \\
	j-(d-e+s) & \hbox{for } d-(e-s-j_1)+1\leqslant j\leqslant d, 
	\end{cases}
	\end{equation}
	and let ${\rm x}_j:=x_{\iota(j)}$ for $j\in [d]$, $\beta_j':= \beta_{\iota(j)}$ for $j\in \{N+1,\dots,N+s\}\cup \{d-(e-s-j_1)+1,\dots,d\}$, and
	\begin{equation}
	b_{ij} :=\begin{cases}
	\sum_{k=1}^{e-s} a_{ik} & \hbox{for } 1\leqslant i\leqslant \ell\ ,\ j=1 , \\
	\sum_{k=1}^{e-s} a_{ik} + 1 & \hbox{for } \ell< i\leqslant \ell+s\ ,\ j=1 , \\
	a_{i,\iota(j)} & \hbox{for } 1\leqslant i\leqslant \ell+s\ ,\ j\in \{2,\dots,N\}\cup \{d-(e-s-j_1)+1,\dots,d\}.
	\end{cases}
	\end{equation}
	Thus we obtain
	\begin{equation}
	y_i=\begin{cases}
	\delta_i' {\rm x}_1^{b_{i1}} \cdots  {\rm x}_N^{b_{iN}}, & 1\leqslant i\leqslant \ell \\
	\delta_i' {\rm x}_1^{b_{i1}} \cdots  {\rm x}_N^{b_{iN}}({\rm x}_{N-\ell+i} + \beta_{N-\ell+i}'), & \ell+1\leqslant i\leqslant \ell+s \\
	\ \ \ {\rm x}_{N-\ell+i}, & \ell+s+1\leqslant i\leqslant m	
	\end{cases}
	\end{equation}
	where $\delta_i'$, $i\in [\ell+s]$ are defined by
	\begin{equation}
	\delta_i':= \delta_i \times \prod_{j=d-(e-s-j_1)+1}^d  ({\rm x}_j+\beta'_j)^{b_{ij}}, 
	\end{equation}
	in which the variables ${\rm x}_1,\dots,{\rm x}_{N+m-\l}$ do not appear. (We note for $j\in \{N+1,\dots,N+s\}$, $\beta'_j\in\mathcal K$, and for $j\in\{d-(e-s-j_1)+1,\dots,d\}$, $\beta'_j\in\mathcal K^{\times}$).
	
	It is easy to check that for all $j\in [N]$, $\sum_{i\in[\ell]} b_{ij}>0$, and for all $i\in[\ell]$, $\sum_{j\in[N]} b_{ij}>0$. First observe that for $i\in[\ell]$,
	$$
	\sum_{j=1}^{N} b_{ij} = b_{i1}+\sum_{j=2}^N a_{i,\iota(j)} = \sum_{k=1}^{e-s} a_{ik} + \sum_{k=2}^{j_1} a_{ik} + \sum_{k=e-s+1}^{n} a_{ik}=\sum_{k=1}^{n} a_{ik} + \sum_{k=2}^{j_1} a_{ik}
	$$
	and by the induction hypothesis $\sum_{k=1}^{n} a_{ik}> 0$. So for all $i\in[\ell]$, $\sum_{j\in[N]} b_{ij} >0$. We also have $\sum_{i\in[\ell]} b_{ij} >0$ for all $j$, $2\leqslant j\leqslant N$, by the induction hypothesis. For $j=1$, we note that by Remark \ref{permissible center}, for each $i\in[\bar\ell]$, there exists $k\in[e-s]$ such that $a_{ik}-\min\{a_{ij}|i\in I\}>0$ and hence $a_{ik}>0$. So $b_{i1}=\sum_{k=1}^{e-s} a_{ik}>0$ for all $i\in[\bar\ell]$, and then $\sum_{i=1}^{\ell} b_{i1} > 0 $.
	
	Next, we show that if $s>0$,
	\begin{equation}
	\mbox{for all } j\in [N],\ \ b_{(\ell+1)j}=\cdots=b_{(\ell+s)j}=\min\{b_{ij}\}_{i\in I},
	\end{equation}
	where $I:= [\bar\ell]\cup \{\ell+1,\dots,\ell+s\}$. The induction hypothesis tells us that for all $j$, $1\leqslant j\leqslant n$, 
	$$
	{\rm a}_j :=\min\{a_{ij}\}_{i\in I} = a_{(\ell+1)j}=\cdots=a_{(\ell+s)j},
	$$
	which implies immediately that for $j$, $2\leqslant j\leqslant N$, $b_{(\ell+1)j}=\cdots=b_{(\ell+s)j}=\min\{b_{ij}\}_{i\in I}$. Now suppose that $j=1$. We note that for $i$, $\ell+1\leqslant i\leqslant\ell+s$,
	$$b_{i1}=1+\sum_{k=1}^{e-s} a_{ik}=1+\sum_{k=1}^{e-s} {\rm a}_{k}.$$
	Hence we only need to show that ${\rm b}_1:= 1+\sum_{k=1}^{e-s} {\rm a}_{k}=\min\{b_{i1}\}_{i\in I}$. By Remark \ref{permissible center}, for each $i\in[\bar\ell]$, there exists $k_i\in[e-s]$ such that $a_{ik_i}-\min\{a_{ij}|i\in I\}=a_{ik_i}-{\rm a}_{k_i}>0$, and so we can write
	$$
	{\rm a}_{k_i}<a_{ik_i} \leqslant a_{ik_i}+\sum_{k\in[e-s]\setminus \{k_i\}} (a_{ik}-{\rm a}_k).
	$$
	Thus for each $i\in[\bar\ell]$, $\sum_{k\in[e-s]}{\rm a}_k < \sum_{k\in[e-s]} a_{ik}$,
	which implies that 
	$$
	\sum_{k\in[e-s]}{\rm a}_k < \min\left\{\sum_{k=1}^{e-s} a_{ik}\ |\ i \in [\bar\ell] \right\}.
	$$
	So we have that ${\rm b}_1= 1+\sum_{k=1}^{e-s} {\rm a}_{k}\leqslant\min\{b_{i1}|i\in [\bar\ell]\}$, which means that
	$$
	{\rm b}_1= \min(\{b_{i1}|i\in [\bar\ell]\}\cup\{b_{\ell+1,1},\cdots,b_{\ell+s,1}\}) = \min\{b_{i1}|i\in I\}.
	$$
	Therefore the matrix $(b_{ij})_{\substack{1\leqslant i\leqslant \ell+s \\ 1\leqslant j\leqslant N}}$ satisfies condition (\ref{QTF condition}) and quasi-toroidal form (\ref{QTF1}) holds for $p$.\\ 
	
	\textbf{Case 2: $j_0=n+1$.} Now we suppose that there exist regular parameters $x_1,\dots,x_d\in\hat{\mathcal O}_{U_k,p}$ and $\beta_j\in\mathcal K$, $j\in J$, such that (\ref{blupeqs}) holds at $p$ and $x_{n+1}=0$ is a local equation of the exceptional divisor of $\lambda_k$. Since $\bar x_1 \cdots \bar x_n=0$ is a local equation of $D_{k-1}$ at $\bar p$, so
	$$
	\prod_{1\leqslant j\leqslant e-s} x_{n+1}(x_j+\beta_j)\times \prod_{e-s+1\leqslant j\leqslant n} x_j=0
	$$
	is a local equation of $\lambda_k^*(D_{k-1})$. Let 
	\begin{align}
	J'_1 & :=\{j|1\leqslant j\leqslant e-s\mbox{ and }\beta_j=0\},\\
	J'_2 & :=\{j|1\leqslant j\leqslant e-s\mbox{ and }\beta_j\in\mathcal K^{\times}\}
	\end{align}
	and suppose that $j'_1:=|J'_1|$, $0\leqslant j'_1\leqslant e-s$. After possibly permuting $x_1,\dots,x_{e-s}$, we may assume that $\beta_1=\cdots=\beta_{j'_1}=0$ and $\beta_{j'_1+1},\dots,\beta_{e-s}\in\mathcal K^{\times}$. Hence 
	$$x_1 x_2 \cdots x_{j'_1} x_{e-s+1}\cdots x_n x_{n+1}=0$$
	is a local equation of $D_k=\lambda_k^*(D_{k-1})_{\rm red}$, and writing $N':=n-(e-s-j'_1)+1$, $p$ is an $N'$--point for $D_k$. We note that $n-e+s+1 \leqslant N'\leqslant n+1$. After substituting (\ref{blupeqs}) in (\ref{np proof}), for $1\leqslant i\leqslant \ell$, we obtain
	$$ 
	y_i=\delta_i \times x_{n+1}^{\sum_{j=1}^{e-s} a_{ij}} \times \prod_{j\in J'_1} x_j^{a_{ij}} \times \prod_{j=e-s+1}^n  x_j^{a_{ij}} \times \prod_{j\in J'_2}(x_j+\beta_j)^{a_{ij}};
	$$
	for $i=\ell+1$, we get
	$$
	y_i=\delta_i \times x_{n+1}^{1+\sum_{j=1}^{e-s} a_{ij}} \times \prod_{j\in J'_1} x_j^{a_{ij}}  \times \prod_{j=e-s+1}^{n}x_j^{a_{ij}} \times  \prod_{j\in J'_2}(x_j+\beta_j)^{a_{ij}};
	$$
	for $\ell+1 < i\leqslant \ell+s$, we have
	$$
	y_i=\delta_i \times x_{n+1}^{1+\sum_{j=1}^{e-s} a_{ij}} \times \prod_{j\in J'_1} x_j^{a_{ij}}  \times \prod_{j=e-s+1}^{n}x_j^{a_{ij}} \times  \prod_{j\in J'_2}(x_j+\beta_j)^{a_{ij}} \times (x_{n-\ell+i}+\beta_{n-\ell+i})
	$$
	where $\beta_{n-\ell+i}\in\mathcal K$, and finally, $y_i=x_{n-\ell+i}$ for $\ell+s+1\leqslant i\leqslant m$. We reindex $x_j$'s and $a_{ij}$'s as follows. Consider the map $\iota':[d]\to [d]$ defined by 
	\begin{equation}
	\iota'(j)=\begin{cases}
	j & \hbox{for } 1\leqslant j\leqslant j'_1, \\
	j+(e-s-j'_1) & \hbox{for } j'_1< j\leqslant d-(e-s-j'_1), \\
	j-(d-e+s) & \hbox{for } d-(e-s-j'_1)< j\leqslant d, 
	\end{cases}
	\end{equation}
	and let ${\rm x}_j:=x_{\iota'(j)}$ for $j\in [d]$, $\beta_j'':= \beta_{\iota'(j)}$ for $j$, $N'< j < N'+s$ or $d-(e-s-j' _1)<j\leqslant d$, and
	\begin{equation}
	b'_{ij} :=\begin{cases}
	a_{i,\iota'(j)} & \hbox{for } 1\leqslant i\leqslant \ell+s\ ,\ 1\leqslant j < N'\mbox{ or } d-(e-s-j'_1)<j\leqslant d,\\
	\sum_{k=1}^{e-s} a_{ik} & \hbox{for } 1\leqslant i\leqslant \ell\ ,\ j=N' , \\
	\sum_{k=1}^{e-s} a_{ik} + 1 & \hbox{for } \ell< i\leqslant \ell+s\ ,\ j=N'.
	\end{cases}
	\end{equation}
	Therefore, we obtain
	\begin{equation}
	y_i=\begin{cases}
	\delta_i'' {\rm x}_1^{b'_{i1}} \cdots  {\rm x}_{N'}^{b'_{iN'}}, & 1\leqslant i\leqslant \ell+1 \\
	\delta_i'' {\rm x}_1^{b'_{i1}} \cdots  {\rm x}_{N'}^{b'_{iN'}}({\rm x}_{N'-\ell+i} + \beta''_{N'-\ell+i}), & \ell+2\leqslant i\leqslant \ell+s \\
	\ \ \ {\rm x}_{N'-\ell+i}, & \ell+s+1\leqslant i\leqslant m	
	\end{cases}
	\end{equation}
	where $\delta_i''$, $i\in [\ell+s]$ are defined by
	\begin{equation}
	\delta_i'':= \delta_i \times \prod_{j=d-(e-s-j'_1)+1}^d  ({\rm x}_j+\beta''_j)^{b'_{ij}}, 
	\end{equation}
	in which the variables ${\rm x}_1,\dots,{\rm x}_{N'+m-\l}$ do not appear.
	(Note that for $j\in \{N'+2,\dots,N'+s\}$, $\beta''_j\in\mathcal K$, and for $j\in\{d-(e-s-j'_1)+1,\dots,d\}$, $\beta''_j\in\mathcal K^{\times}$). The same argument given in the previous case shows that for all $j\in [N']$, $\sum_{i\in[\ell+1]} b'_{ij}>0$ and for all $i\in[\ell+1]$, $\sum_{j\in[N']} b'_{ij}>0$, and 
	$$
	\mbox{for all } j\in [N'],\ \ b'_{(\ell+1)j}=\cdots=b'_{(\ell+s)j}=\min\{b'_{ij}\}_{i\in I}.
	$$
	Thus quasi-toroidal form (\ref{QTF2}) holds for $p$ on $D_k$.
\end{proof}

\begin{Remark}\label{permissible center}
	With the notation of Lemma \ref{QTF}, let $T_k\subset W_Z(U_k)$ be the center of the permissible blowing up $\lambda_{k+1}:U_{k+1}\to U_k$ in the strong principalization sequence (\ref{P}) of the ideal sheaf $\mathcal N$ of $W_Z(U)$. Suppose that local equations of $T_k$ at $p\in W_Z(U_k)$ are 
	$$
	x_{j_1}=\cdots=x_{j_{e-s}}=x_{n+1}=\cdots=x_{n+s}=0
	$$
	where $e$ denotes the codimension of $T_k$ in $U_k$, and $\{j_1,\dots,j_{e-s}\}\subseteq [n]$\footnote{Recall that we use the convention that $\{n+1,\dots,n+s\}=\emptyset$ if $s=0$, i.e., $c=\bar\ell$, so that in this case $x_{n+1},\dots,x_{n+s}$ are not contained in the local equations of $T_k$).}. Consider the matrix $\mathbf{w}\in\mathbb N^{c\times e}$ defined by 
	$$
	\mathbf{w}=(\omega_{fg})_{\substack{1\leqslant f\leqslant c \\ 1\leqslant g\leqslant e}}:= \left(
	\begin{array}{c|c}
	(a_{ij_t})_{\substack{1\leqslant i\leqslant \bar\ell \\ 1\leqslant t\leqslant e-s}} & \mathrm{O}_{\bar\ell\times s} \\
	\hdotsfor[.5]{2}\\
	(a_{ij_t})_{\substack{\ell+1\leqslant i\leqslant \l+s \\1\leqslant t\leqslant e-s}} & I_{s}
	\end{array}
	\right),   
	$$
	and let $\mathbf{m}=(m_{fg})_{c\times e}:=(\min\{\omega_{tg}|t\in[c]\})_{\substack{1\leqslant f\leqslant c \\ 1\leqslant g\leqslant e}}$. Then $\overline{\mathbf{w}}=(\bar{\omega}_{fg})_{\substack{1\leqslant f\leqslant c \\ 1\leqslant g\leqslant e}}:=\mathbf{w-m}$ is a nonzero row and nonzero column matrix.
\end{Remark} 

\begin{Lemma}\label{QTF0}
	Suppose that $\varphi:(U,D)\to (V,E)$ is a toroidal morphism. 
	Let $Z\subset V$ be a $c$ codimensional (0,0)--SV for $E$ at a 0--point $q\in V\setminus E$. Suppose that
	\begin{equation}\label{P0}
	U_{n_0}\xrightarrow{\lambda_{n_0}}U_{n_0-1}\cdots\xrightarrow{}U_k\xrightarrow{\lambda_k}U_{k-1}\xrightarrow{}\cdots\to U_1\xrightarrow{\lambda_1}U_0=U
	\end{equation}
	is a SPS of 
	$W_Z(U)$. We write $\Lambda_k=\lambda_1\circ\cdots\circ\lambda_k$, for $k \geqslant 1$, and $\Lambda_0=id_U$. Then:
	\begin{enumerate}
		\item For all $0 \leqslant k\leqslant n_0$, $p\in (\varphi\circ\Lambda_k)^{-1}(q)$ is also a 0--point for $D_k=\Lambda_k^*(D)_{\mathrm{red}}$, and there exist regular system of parameters $y_1,\dots,y_m$ at $q$, and regular system of parameters $x_1,\dots,x_d$ at $p$ such that one of the following possibilities holds:
		\begin{equation}\label{QTF eq 00}
		y_i=x_i, \qquad \qquad \qquad  \mbox{ for } 1\leqslant i\leqslant m; \mbox{ or}
		\end{equation}
		\begin{equation}\label{QTF eq 01}
		y_i=\begin{cases}
		x_c(x_i+\beta_i) & \mbox{ for } 1\leqslant i\leqslant c-1, \\
		x_i & \mbox{ for } c\leqslant i\leqslant m,
		\end{cases}
		\end{equation}
		where $\beta_1,\dots,\beta_{c-1}\in\mathcal K$.
		
		\item If $p\in W_Z(U_k)$, for some  $0 \leqslant k < n_0$, then (\ref{QTF eq 00}) holds at $p$, and
		\begin{equation}
		\mathcal N_{k,p}:=\mathcal N \hat{\mathcal O}_{U_k,p} = \langle  x_1 , \dots ,  x_c\rangle.
		\end{equation}
	\end{enumerate}
\end{Lemma}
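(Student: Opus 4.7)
The plan is to adapt the induction scheme of Lemma \ref{QTF} to the present much simpler setting, exploiting the fact that $\ell=0$ forces $q \notin E$ and therefore, since $\varphi$ is toroidal, $\varphi$ is smooth at every point of $\varphi^{-1}(q)$. In particular, $D$ vanishes in a Zariski neighborhood of $\varphi^{-1}(q)$, so each $p \in (\varphi\circ\Lambda_k)^{-1}(q)$ is automatically a $0$-point for $D_k=\Lambda_k^*(D)_{\rm red}$, and no exponent matrix or unit series enters the picture. The base case $k=0$ then comes directly from the np$(0,0)$ analysis carried out in the proof of Proposition \ref{NP Locus is SNC}: there exist regular parameters $y_1,\dots,y_m$ at $q$ (with $y_1=\cdots=y_c=0$ cutting out $Z$) and regular parameters $x_1,\dots,x_d$ at $p$ such that $y_i = x_i$ for $1\leqslant i\leqslant m$, which is form (\ref{QTF eq 00}), and $\mathcal I_Z\mathcal O_{U,p}=\langle x_1,\dots,x_c\rangle$. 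Since the gcd is $1$, this already equals $\mathcal N_{0,p}$.

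For the inductive step, fix $p\in (\varphi\circ\Lambda_k)^{-1}(q)$ and set $\bar p = \lambda_k(p)$. If $\bar p$ lies outside the center $T_{k-1}$ of $\lambda_k$, then $\lambda_k$ is a local isomorphism at $p$ and the induction hypothesis transfers verbatim. The interesting case is $\bar p \in T_{k-1} \subset W_Z(U_{k-1})$. By the induction hypothesis, form (\ref{QTF eq 00}) holds at $\bar p$ with parameters $\bar x_1,\dots,\bar x_d$ and $\mathcal N_{k-1,\bar p}=\langle \bar x_1,\dots,\bar x_c\rangle$. Specializing Remark \ref{permissible center} to the present $(0,0)$ setting (so $n=0$, $\bar\ell=0$, $s=c$, $e=c$), local equations of $T_{k-1}$ at $\bar p$ must be exactly $\bar x_1=\cdots=\bar x_c=0$. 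I then apply the standard blowup equations (\ref{blup eqs}) to produce regular parameters $x_1,\dots,x_d\in\hat{\mathcal O}_{U_k,p}$, an index $j_0\in [c]$, and constants $\beta_j\in\mathcal K$ for $j\in [c]\setminus\{j_0\}$ describing $\bar x_j$ in terms of the $x_j$'s.

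After permuting the first $c$ indices so that $j_0=c$, direct substitution into $y_i=\bar x_i$ gives exactly form (\ref{QTF eq 01}): $y_c=x_c$, $y_i=x_c(x_i+\beta_i)$ for $1\leqslant i\leqslant c-1$, and $y_i=x_i$ for $c<i\leqslant m$. Moreover, the pullback of $\mathcal N_{k-1,\bar p}=\langle \bar x_1,\dots,\bar x_c\rangle$ is the principal ideal generated by $x_c$, so after removing this common factor the weak transform $\mathcal N_{k,p}$ is the unit ideal, meaning $p\notin W_Z(U_k)$. This simultaneously verifies the first assertion for $p$ and confirms the second assertion: the only points of $(\varphi\circ\Lambda_k)^{-1}(q)$ that can still lie in $W_Z(U_k)$ are those at which $\lambda_k$ was a local isomorphism and form (\ref{QTF eq 00}) together with $\mathcal N_{k,p}=\langle x_1,\dots,x_c\rangle$ was preserved from step $k-1$.

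The only steps that require care are bookkeeping ones: checking that $D_k$ is genuinely zero near every $p\in (\varphi\circ\Lambda_k)^{-1}(q)$ (which follows from $D$ vanishing near $\varphi^{-1}(q)$ together with the centers being contained in $W_Z$ over this region), and justifying the permutation to arrange $j_0=c$. There is no real obstacle: the engine of Lemma \ref{QTF}, namely the combination of Remark \ref{permissible center} and the blowup formula (\ref{blup eqs}), collapses in the $(0,0)$ setting to a single transition (\ref{QTF eq 00}) $\rightsquigarrow$ (\ref{QTF eq 01}) which also principalizes the ideal in one step, and the proof terminates without any of the exponent-matrix analysis that dominated Lemma \ref{QTF}.
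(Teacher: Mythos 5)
Your proof is correct and follows exactly the route the paper intends: the paper's proof of Lemma \ref{QTF0} consists of the single remark that it is ``completely [similar] to the proof of Lemma \ref{QTF},'' and you have filled this in faithfully, carrying out the same induction on the length of the strong principalization sequence, using the base case from the $\mathfrak{np}\,(0,0)$ analysis of Proposition \ref{NP Locus is SNC}, the blowup formula (\ref{blup eqs}), and the specialization of Remark \ref{permissible center} to $n=\bar\ell=0$, $s=e=c$.
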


\begin{proof}
	The proof of this lemma is completely to the proof of Lemma \ref{QTF}.
\end{proof}

\section{Main Results}


\begin{Lemma}\label{tor after principalization}
	Suppose that $\varphi:(U,D)\to (V,E)$ is a toroidal morphism of nonsingular varieties. Let $Z\subset V$ be a nonsingular subvariety of codimension $c$, $c\geqslant 2$, which makes SNCs with $E$. Suppose that $\pi:V_1\to V$ is the blowup of $Z$, and suppose that $\lambda:U_1\to U$ is a SPS of $W_Z(U)$. Then $D_1=\lambda^{-1}(D)$ and $E_1=\pi^{-1}(E)$ are SNC divisors on $U_1$ and $V_1$ respectively, and there exist a toroidal morphism $\phi_1:(U_1,D_1)\to (V_1,E_1)$ such that the diagram
	\begin{equation}\label{LocTor}
	\begin{CD}
	(U_1,D_1)@>\phi_1>> (V_1,E_1)\\
	@V\lambda VV @VV\pi V\\ (U,D) @>>\varphi> (V,E)
	\end{CD}
	\end{equation}
	commutes.
\end{Lemma}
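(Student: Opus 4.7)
The plan has four steps: construct $\phi_1$ via the universal property of the blowup, verify the SNC properties of $D_1$ and $E_1$, deduce the preimage relation $D_1=\phi_1^{-1}(E_1)$, and finally establish the toroidal form of Theorem \ref{TF} for $\phi_1$ at every point.

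First I would construct $\phi_1$. By Proposition \ref{NP Locus is SNC}, $\mathcal I_Z\mathcal O_U$ is locally monomial and factors as $\mathcal I_Z\mathcal O_U=\mathcal O_U(-F)\cdot\mathcal N$; since $\lambda$ is a strong principalization of $\mathcal N$, the pullback $\mathcal I_Z\mathcal O_{U_1}$ becomes an invertible ideal sheaf on $U_1$. The universal property of the blowup $\pi:V_1\to V$ then produces a unique morphism $\phi_1:U_1\to V_1$ with $\pi\circ\phi_1=\varphi\circ\lambda$. Lemma \ref{QTF}(1) gives that $D_1=\lambda^*(D)_{\rm red}$ is SNC on $U_1$, and a standard blowup-chart computation (using that $Z$ makes SNCs with $E$) shows that $E_1=\pi^*(E)_{\rm red}$ is SNC on $V_1$. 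The equality $D_1=\phi_1^{-1}(E_1)$ is then formal from $D=\varphi^{-1}(E)$ together with the commutativity of (\ref{LocTor}).

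The main work lies in proving $\phi_1$ is toroidal. Fix $p\in U_1$, and set $\bar p=\lambda(p)$, $q=\varphi(\bar p)=\pi(\phi_1(p))$ and $q'=\phi_1(p)$. Depending on whether $q\in E$ or $q\notin E$, I would apply Lemma \ref{QTF} (with $Z$ regarded as an $(\ell,\bar\ell)$-SV for $E$ at $q$) or Lemma \ref{QTF0} (if $Z$ is a $(0,0)$-SV at $q$). This furnishes permissible parameters $x_1,\dots,x_d$ at $p$ on $D_1$, permissible parameters $y_1,\dots,y_m$ for $Z$ at $q$ on $E$, and a quasi-toroidal description (\ref{QTF1}) or (\ref{QTF2}) of $\varphi\circ\lambda=\pi\circ\phi_1$ in these parameters. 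The blowup $\pi:V_1\to V$ at $q'$ admits the explicit local description (\ref{blup eqs}) in the parameters $y_1,\dots,y_m$ cutting out $Z$, introducing an exceptional parameter and replacing the remaining $Z$-cutting parameters by translates of quotients. Substituting this description into the quasi-toroidal equations and solving for $\phi_1^*$ expresses $\phi_1$ in the monomial form (\ref{TFeq}) with respect to a new regular system $y'_1,\dots,y'_m$ at $q'$, which I would verify to be permissible for $E_1$.

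The hard part will be the bookkeeping that ensures the resulting exponent matrix $(a_{ij})$ satisfies the positivity conditions of Theorem \ref{TF} and that $y'_1,\dots,y'_m$ are genuinely permissible parameters for $E_1$ at $q'$. The key input is condition (\ref{QTF condition}) of Lemma \ref{QTF}, which asserts that the rows of the exponent matrix indexed by the $Z$-exceptional directions attain the column-wise minima; this is precisely what guarantees that, after factoring out the exceptional monomial introduced by $\pi$, the residual exponents remain nonnegative and the strict-transform components of $E_1$ appear with the correct multiplicities. The dichotomy between (\ref{QTF1}) and (\ref{QTF2}) corresponds to whether $q'$ avoids or lies on the exceptional divisor of $\pi$, and a parallel, simpler argument based on Lemma \ref{QTF0} handles the $(0,0)$ case, completing the proof.
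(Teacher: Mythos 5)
Your proposal is correct and follows essentially the same route as the paper: reduce to a local computation at $p_1\in U_1$, invoke Lemma \ref{QTF0} when $q\notin E$ and Lemma \ref{QTF} when $q\in E$ to obtain quasi-toroidal forms, and then use condition (\ref{QTF condition}) to verify the monomial form (\ref{TFeq}) with the required positivity of exponents after blowing up $Z$. One genuine (if minor) difference is that you construct $\phi_1$ in one stroke via the universal property of the blowup, using that $\mathcal I_Z\mathcal O_{U_1}$ is locally principal after the SPS; the paper instead defines $\phi_1(p_1)$ pointwise by exhibiting the maximal ideal of the target point $q_1\in\pi^{-1}(q)$ and then reads off toroidality in those coordinates. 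Your variant is cleaner and makes the commutativity of (\ref{LocTor}) and the well-definedness of $\phi_1$ automatic, at the modest cost of still needing to recover the explicit local parameters at $q_1$ to verify toroidality, which is where the bulk of the work lives in either treatment. One small inaccuracy in your sketch: the (\ref{QTF1}) versus (\ref{QTF2}) dichotomy in Lemma \ref{QTF} records whether the exceptional divisor of the SPS $\lambda$ at $p_1$ is a ``$D$-direction'' or a ``non-$D$'' direction on the domain side; it does not directly track the position of $q'=\phi_1(p_1)$ relative to the exceptional divisor of $\pi$ (indeed $q'$ always lies over $Z$ once $q\in Z$). The paper's case analysis after Lemma \ref{QTF} instead branches on which generator of $\mathcal I_Z\hat{\mathcal O}_{U_1,p_1}$ is the principal one. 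This does not affect the correctness of your overall plan but is worth straightening out when you fill in the bookkeeping.
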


\begin{proof}
	Given a point $p_1\in U_1$, we write $p:=\lambda(p_1)$, and $q:=\varphi(p)$. If $q\not\in Z$, then $p\not\in W_Z(U)$ since $W_Z(U)\subseteq\varphi^{-1}(Z)$. In this case, $\pi$ and $\lambda$ are isomorphism at $q$ and $p$ respectively and we have nothing to prove. So we assume that $q\in Z$.
	
	First suppose that $q$ is a 0--point and $Z$ is a (0,0)--SV for $E$ at $q$. Then by Lemma \ref{QTF0}, $p_1\in (\varphi\lambda)^{-1}(q)$ is also a 0--point for $D_1=\lambda^*(D)_{\mathrm{red}}$, and there exist regular parameters $\bar y_1,\dots,\bar y_m$ in $\mathcal O_{V,q}$ such that $\bar y_1=\cdots=\bar y_c=0$ are local equations of $Z$ at $q$, and there exist regular system of parameters $x_1,\dots,x_d$ at $p_1$, and $\beta_1,\dots,\beta_{c-1}\in\mathcal K$ such that 
	\begin{equation}
	\bar y_i=\begin{cases}
	x_c(x_i+\beta_i) & \mbox{ for } 1\leqslant i\leqslant c-1, \\
	x_i & \mbox{ for } c\leqslant i\leqslant m,
	\end{cases}
	\end{equation}
	and $\mathcal I_Z\mathcal O_{U_1,p_1}=\langle x_c\rangle$, since $\lambda:U_1\to U$ is a SPS of $Z$, i.e., $W_Z(U_1)=\emptyset$. Let $q_1\in\pi^{-1}(q)$ be the closed point corresponding to the maximal ideal
	$$\mathfrak m:=\langle \frac{\bar y_1}{\bar y_c}-\beta_1,\dots,\frac{\bar y_{c-1}}{\bar y_c}-\beta_{c-1},\bar y_c,\dots,\bar y_m \rangle\mathcal O_{V_1}.$$
	We then define $\phi_1(p_1):=q_1$ which is a 0--point for $E_1$ since $q\notin E$. In this way, we have that 
	\begin{equation}
	y_i :=\begin{cases}
	\frac{\bar y_i}{\bar y_c}-\beta_i & \mbox{ for } 1\leqslant i\leqslant c-1, \\
	\bar y_i & \mbox{ for } c\leqslant i\leqslant m,
	\end{cases}
	\end{equation}
	are regular parameters in $\mathcal O_{V_1,q_1}$ such that $y_i=x_i$ for $1\leqslant i\leqslant m$, which means that $\phi_1$ is smooth at $p_1$.
	
	Now suppose that $q$ is an $\ell$--point and $Z$ is an $(\ell,\bar\ell)$--SV for $E$ at $q$, for some $\ell> 0$ and $0\leqslant \bar\ell \leqslant \ell$. Then $p_1\in (\varphi\lambda)^{-1}(q)$ is an $n$-point for $D_1$, for some $n>0$, and by Lemma \ref{QTF}, there exist permissible parameters $x_1,\dots,x_d$ at $p_1$ on $D_1$, and permissible parameters $\bar y_1,\dots,\bar y_m$ for $Z$ at $q$ on $E$ such that 
	$$
	\bar y_1=\cdots=\bar y_{\bar\ell}=\bar y_{\ell+1}=\cdots=\bar y_{\ell+s}=0
	$$
	are local equations of $Z$ at $q$, where $s:=c-\bar\ell\geqslant 0$, and there exist a matrix $\mathbf{a}=(a_{ij})_{\substack{1\leqslant i\leqslant \ell+s \\ 1\leqslant j\leqslant n}}$ with natural entries, unit series $\delta_1,\dots,\delta_{\ell+s}\in\hat{\mathcal O}_{U_1,p_1}$, and $\beta_{n+1},\dots,\beta_{n+s}\in\mathcal K$ such that one of the quasi-toroidal forms (\ref{QTF1}) or (\ref{QTF2}) which is principal holds for $p_1$, since $W_Z(U_1)=\emptyset$. Therefore, after possibly permuting $\bar y_1,\dots,\bar y_{\bar\ell}$, and $\bar y_{\ell+1},\dots,\bar y_{\ell+s}$, there are three possibilities to consider:
	\begin{enumerate}
		\item\label{item1} Quasi-toroidal form (\ref{QTF1}) holds for $p_1$, and for all $j\in [n]$, $\min\{a_{ij}\}_{i\in I}=a_{1j}$ where $I:= [\bar\ell]\cup \{\ell+1,\dots,\ell+s\}$, so that $\mathcal I_Z\hat{\mathcal O}_{U_1,p_1}=\langle  \delta_1 x_1^{a_{11}} \cdots  x_n^{a_{1n}} \rangle$. In addition, if $s>0$, for all $j\in [n]$, $a_{(\ell+1)j}=\cdots=a_{(\ell+s)j}=\min\{a_{ij}\}_{i\in I}=a_{1j}$.
		
		\item Quasi-toroidal form (\ref{QTF1}) holds for $p_1$, and for some $j\in[n]$, $a_{\ell+1,j}\neq 0$, and $\beta_{n+1}\in\mathcal K^{\times}$, so that $\mathcal I_Z\hat{\mathcal O}_{U_1,p_1}=\langle \delta_{\l+1} x_1^{a_{\l+1,1}} \cdots  x_n^{a_{\l+1,n}} (x_{n+1}+\beta_{n+1})\rangle$. We note that in this case, we must have $s>0$, and for all $j\in [n]$, $a_{(\ell+1)j}=\cdots=a_{(\ell+s)j}=\min\{a_{ij}\}_{i\in I}$.
		
		\item Quasi-toroidal form (\ref{QTF2}) holds for $p_1$, and $\mathcal I_Z\hat{\mathcal O}_{U_1,p_1}=\langle \delta_{\l+1} x_1^{a_{\l+1,1}} \cdots  x_n^{a_{\l+1,n}} \rangle$.
	\end{enumerate}
	We will complete the proof when (\ref{item1}) holds for $p_1$. The proof of the other cases are similar and is left to the reader.
	
	Suppose that (\ref{QTF1}) holds for $p_1$, i.e., 
	\begin{equation}
	\bar y_i=\left\{
	\begin{array}{ll}
	\delta_i x_1^{a_{i1}} \cdots  x_n^{a_{in}}, & 1\leqslant i\leqslant \ell \\
	\delta_i x_1^{a_{i1}} \cdots  x_n^{a_{in}}(x_{n-\ell+i}+\beta_{n-\ell+i}), & \ell+1\leqslant i\leqslant \ell+s \\
	\ \ \ x_{n-\ell+i}, & \ell+s+1\leqslant i\leqslant m,
	\end{array}
	\right.
	\end{equation}
	and for all $j\in [n]$, $\min\{a_{ij}\}_{i\in I}=a_{1j}$ where $I:= [\bar\ell]\cup \{\ell+1,\dots,\ell+s\}$, so that 
	$$
	\mathcal I_Z\hat{\mathcal O}_{U_1,p_1}=\langle  x_1^{a_{11}} \cdots  x_n^{a_{1n}} \rangle.
	$$
	In addition, if $s>0$, for all $j\in [n]$, $a_{(\ell+1)j}=\cdots=a_{(\ell+s)j}=\min\{a_{ij}\}_{i\in I}=a_{1j}$. Let
	$$
	\overline{\mathbf a}=\left(\begin{array}{lcl}
	a_{11} & \cdots & a_{1n}\\
	a_{21}-a_{11} & \cdots & a_{2n}-a_{1n}\\
	\vdots & &\vdots\\
	a_{\bar\ell 1}-a_{11} & \cdots & a_{\bar\ell n}-a_{1n}\\
	a_{\bar\ell+1,1} & \cdots & a_{\bar\ell+1,n}\\
	\vdots & &\vdots\\
	a_{\ell1} & \cdots & a_{\ell n}\\
	\end{array}\right)\in\mathbb N^{\ell\times n}.
	$$
	We first observe that $\overline{\mathbf a}$ has no zero column. If there exists $j_0\in[n]$ such that 
	$$
	a_{1j_0}=a_{2j_0}-a_{1j_0}=\cdots=a_{\bar\ell j_0}-a_{1j_0}=a_{\bar\ell+1, j_0}=\cdots=a_{\ell j_0}=0,
	$$ 
	then we must have $a_{1j_0}=a_{2j_0}=\dots=a_{\ell j_0}=0$, which contradicts $\sum_{i=1}^{\ell}a_{ij}\neq 0$ for all $j\in [n]$. Recall we also have $\sum_{j=1}^{n}a_{ij}\neq 0$ for all $i\in [\ell]$. However, it is possible that for some $i$, $2\leqslant i\leqslant \bar\ell$, the $i$-th row of $\overline{\mathbf a}$ is zero. After possibly permuting $\bar y_2,\dots,\bar y_{\bar\ell}$, we may assume that there exists $t$ such that for all $i$, $t+1\leqslant i\leqslant \bar\ell$, $a_{i1}-a_{11}=\cdots=a_{in}-a_{1n}=0$. (We note that the number $\bar\ell-t$ of zero rows of $\overline{\mathbf a}$ is at most $\min\{\ell-r,\bar\ell-1\}$, where $r$ is ${\mathrm {rank}}\:\overline{\mathbf a}={\mathrm    {rank}}\:(a_{ij})_{\substack{1\leqslant i\leqslant \ell \\ 1\leqslant j\leqslant n}}$). We set
	\begin{align*}
	\tilde\delta_i\ \quad\quad\qquad &:= \delta_1^{-1}\delta_i \quad\qquad\qquad\qquad\qquad\qquad\qquad\mbox{ \ for \ } 2\leqslant i\leqslant \ell+s, \\
	\tilde\beta_{n+m-r-t+i} &:= \tilde\delta_{i}(p_1)\quad\qquad\qquad\qquad\qquad\qquad\qquad\mbox{ for \ } t+1\leqslant i\leqslant \bar\ell,\\  
	\tilde x_{n+m-r-t+i} &:= \tilde\delta_{i}-\tilde\delta_{i}(p_1)\quad\quad\qquad\qquad\qquad\quad\qquad\mbox{ for \ } t+1\leqslant i\leqslant \bar\ell,\\   
	\tilde\beta_{n-\ell+i}\ \quad\quad &:= \tilde\delta_{n-\ell+i}(p_1)\beta_{n-\ell+i}\quad\qquad\qquad\quad\qquad\mbox{ \,for \ } \ell+1\leqslant i\leqslant \ell+s,\\
	\tilde x_{n-\ell+i}\ \quad\quad &:=  \tilde\delta_{n-\ell+i}(x_{n-\ell+i}+\beta_{n-\ell+i})-\tilde\beta_{n-\ell+i}\quad\mbox{\,for \ } \ell+1\leqslant i\leqslant \ell+s,
	\end{align*}
	where $\tilde\delta_j(p_1)\in\mathcal K^{\times}$ denotes the unit series $\tilde\delta_j$ modulo the maximal ideal of $\hat{\mathcal O}_{U_1,p_1}$, for $j$, $t+1\leqslant j\leqslant \bar\ell$ or $n+1\leqslant j\leqslant n+s$. We also note that for $j$, $n+1\leqslant j\leqslant n+s$, $\tilde\beta_j\in\mathcal K$ since $\beta_j\in\mathcal K$, and for $j$, $n+m-r+1\leqslant j\leqslant n+m-r+(\bar\ell-t)$, $\tilde\beta_j\in\mathcal K^{\times}$.
	
	We reindex $x_j$'s and $\tilde\beta_j$'s by defining the map $\theta:[d]\to[d]$ as follows:
	\begin{equation}
	\theta(j)=\begin{cases}
	j & \hbox{for } j\in[n], \hbox{ or }  n+(\bar\ell-t)+(m-r)< j \leqslant d,\\
	j+m-r & \hbox{for } n< j \leqslant n+(\bar\ell-t), \\
	j-\bar\ell+t & \hbox{for } n+(\bar\ell-t)< j \leqslant n+(\bar\ell-t)+(m-r).
	\end{cases}
	\end{equation}
	Let 
	$$
	{\rm x}_j:=\begin{cases}\tilde x_{\theta(j)}& \hbox{for }n+1\leqslant j \leqslant n+\bar\ell-t+s, \\ x_{\theta(j)} & \hbox{otherwise},\end{cases}
	$$
	and let $\beta'_j:=\tilde\beta_{\theta(j)}$ for $j$, $n+1\leqslant j \leqslant n+\bar\ell-t+s$. We then define
	\begin{equation}\label{almost TF1}
	y_i:=
	\begin{cases}
	\bar y_1\ \,\quad\quad\qquad\qquad=\delta_1 \x_1^{a_{11}} \cdots  \x_n^{a_{1n}}, & \mbox{ for \ } i=1, \\
	\frac{\bar y_i}{\bar y_1}\ \,\quad\quad\qquad\qquad=\tilde\delta_i \x_1^{a_{i1}-a_{11}} \cdots  \x_n^{a_{in}-a_{1n}}, & \mbox{ for \ } 2\leqslant i\leqslant t, \\
	\frac{\bar y_i}{\bar y_1}-\beta'_{n-t+i}\ \,\quad\quad=\x_{n-t+i}, & \mbox{ for \  } t+1\leqslant i\leqslant \bar\ell, \\ 
	\bar y_i\ \ \,\quad\quad\qquad\qquad=\tilde\delta_i \x_1^{a_{i1}} \cdots  \x_n^{a_{in}}, & \mbox{ for \ } \bar\ell+1\leqslant i\leqslant \ell, \\
	\frac{\bar y_i}{\bar y_1}-\beta'_{n-(\ell-\bar\ell+t)+i}=\x_{n-(\ell-\bar\ell+t)+i}, & \mbox{ for \  } \ell+1\leqslant i\leqslant \ell+s, \\
	\bar y_i \ \,\quad\quad\qquad\qquad\ =\x_{n-(\ell-\bar\ell+t)+i}, & \mbox{ for \ }\ell+s+1\leqslant i\leqslant m.
	\end{cases}
	\end{equation}
	Let $q_1\in\pi^{-1}(q)$ be the closed point corresponding to the maximal ideal
	$\langle y_1,\dots,y_m \rangle\hat{\mathcal O}_{V_1}$.
	We define $\phi_1(p_1):=q_1$, and we note that 
	$$y_1^{\bar\ell} y_2\cdots y_t \times \prod_{i=t+1}^{\bar\ell}(y_i+\beta'_{n-t+i})\times y_{\bar\ell+1}\cdots y_{\ell}=0 $$
	is a local equation of $\pi^*(E)$ at $q_1$ where $\beta'_{n-t+i}\in\mathcal K^{\times}$ for $i$, $t+1\leqslant i\leqslant \bar\ell$. Thus writing $\ell_1:=\ell-\bar\ell+t$, $q_1$ is an $\ell_1$--point for $E_1=\pi^*(E)_{\rm{red}}$, and $y_1\cdots y_t y_{\bar\ell+1}\cdots y_{\ell}=0$ is a local equation of $E_1$ at $q_1$. We then reindex $y_i$'s as follows. Consider the map $\sigma:[m]\to[m]$ defined by
	\begin{equation}
	\sigma(i)=\begin{cases}
	i & \hbox{for } i\in[t]\cup([m]\setminus[\ell]), \\
	i+\bar\ell-t & \hbox{for } t< i\leqslant \ell_1, \\
	i+\bar\ell-\ell & \hbox{for } \ell_1< i\leqslant \ell, 
	\end{cases}
	\end{equation}
	and let ${\rm y}_i:=y_{\sigma(i)}$ for $i\in [m]$. We note that $\y_1\cdots\y_{\ell_1}=0$ is a local equation of $E_1$ at $p_1$. We also define
	\begin{equation}
	b_{ij} :=\begin{cases}
	a_{ij} & \hbox{for } i=1\ ,\ j\in[n], \\
	a_{ij}-a_{1j} & \hbox{for } 2\leqslant i\leqslant t\ ,\ j\in[n], \\
	a_{\sigma(i),j} & \hbox{for } t< i\leqslant \ell_1\ ,\ j\in[n],
	\end{cases} \mbox{ and } \delta'_i :=\begin{cases}
	\delta_i & \hbox{for } i=1, \\
	\tilde\delta_i & \hbox{for } 2\leqslant i\leqslant t, \\
	\tilde\delta_{\sigma(i)} & \hbox{for } t< i\leqslant \ell_1.
	\end{cases}
	\end{equation}
	In this way, we obtain permissible parameters $\y_1,\dots,\y_m$ at the $\ell_1$--point $q_1$ on $E_1$, permissible parameters $\x_1,\dots,\x_d$ at the $n$--point $p_1$ on $D_1$ such that
	\begin{equation}
	\y_i=\left\{
	\begin{array}{ll}
	\delta'_i \x_1^{b_{i1}} \cdots  \x_n^{b_{in}} & \mbox{ for }1\leqslant i\leqslant \ell_1, \\
	\ \ \ \x_{n-\ell_1+i} & \mbox{ for } \ell_1< i\leqslant m,
	\end{array}
	\right.
	\end{equation}
	where $(b_{ij})_{\substack{1\leqslant i\leqslant \ell_1 \\ 1\leqslant j\leqslant n}}\in\mathbb N^{\ell_1\times n}$ has the property that for all $j\in [n]$, $\sum_{i=1}^{\ell_1}b_{ij}> 0$, and for all $i\in[\ell_1]$, $\sum_{j=1}^{n}b_{ij}> 0$. It is also clear that the variables $\x_1,\dots,\x_{n+(m-\l_1)}$ do not appear in $\delta'_i \in \hat{\mathcal O}_{U_1,p_1}$, for $1\leqslant i\leqslant \ell_1$. Thus $\phi_1$ is toroidal at $p_1$, and by our definition $\phi_1(p_1)=\pi^{-1}\varphi\lambda(p_1)$.
\end{proof}


\begin{Theorem}\label{Locally Toroidalization}
	Suppose that $\varphi:(X,U_{\a},D_{\a})_{\a\in I}\to (Y,V_{\a},E_{\a})_{\a\in I}$ is a locally toroidal morphism of nonsingular varieties, and let $\mathcal{E}_0=\sum_{\alpha \in I}\overline{E}_{\alpha}$, where $\overline{E}_{\alpha}$ is the Zariski closure of $E_{\alpha}$ in $Y$. There exists a commutative diagram
	\begin{equation}\label{multidiagram}
	\xymatrix{
		\lambda: \widetilde X=X_{n_0} \ar[r]^{\lambda_{n_0}}\ar[d]_{\tilde\varphi=\phi_{n_0}} & X_{n_0-1}\ar[r]\ar[d]_{\phi_{n_0-1}} & \cdots\ar[r] & X_1\ar[r]^{\lambda_1}\ar[d]^{\phi_1} & X=X_0\ar[d]^{\phi_0=\varphi} \\
		\pi: \widetilde Y=Y_{n_0} \ar[r]_{\pi_{n_0}} & Y_{n_0-1}\ar[r] & \cdots\ar[r] & Y_1\ar[r]_{\pi_1} & Y=Y_0}
	\end{equation}
	with the following properties. For $\a\in I$ and $0\leqslant k\leqslant n_0$, let
	\begin{center}
		\begin{tabular}{lll}
			
			$\Pi_k=\pi_0\circ\cdots\circ\pi_k$, & \ \  & $\Lambda_k=\lambda_0\circ\cdots\circ\lambda_k$, \\
			
			$V_{k,\alpha}=\Pi_k^{-1}(V_{\alpha})$, & \ \  & $U_{k,\alpha}=\Lambda_k^{-1}(U_{\alpha})$, \\ 
			
			$\pi_{k,\alpha}=\pi_k|_{V_{k,\alpha}}:V_{k,\alpha}\rightarrow V_{k-1,\alpha}$, & \ \  & $\lambda_{k,\alpha}=\lambda_k|_{U_{k,\alpha}}:U_{k,\alpha}\rightarrow U_{k-1,\alpha}$, \\
			
			$\Pi_{k,\alpha}=\Pi_k|_{V_{k,\alpha}}:V_{k,\alpha}\to V_{\alpha}$, & \ \  & $\Lambda_{k,\alpha}=\Lambda_k|_{U_{k,\alpha}}:U_{k,\alpha}\to U_{\alpha}$, \\ 
			
			$E_{k,\alpha}=\Pi_{k,\alpha}^{-1}(E_{\alpha})=\Pi_{k,\alpha}^*(E_\alpha)_{\rm red}$, & \ \ & $D_{k,\alpha}=\Lambda_{k,\alpha}^{-1}(D_{\alpha})=\Lambda_{k,\alpha}^*(D_\alpha)_{\rm red}$.
		\end{tabular}
	\end{center}
	\begin{enumerate}
		
		\item The morphisms $\lambda:\widetilde X\to X$ and $\pi:\widetilde Y\to Y$ are sequences of monoidal transforms.
		
		\item  For all $\a\in I$ and $0\leqslant k\leqslant n_0$, $D_{k,\a}$ is a SNC divisor on $U_{k,\a}$, $E_{k,\a}$ is a SNC divisor on $V_{k,\a}$, and $\phi_k:(X_k,U_{k,\a},D_{k,\a})_{\a\in I}\to (Y_k,V_{k,\a},E_{k,\a})_{\a\in I}$ is a locally toroidal morphism of nonsingular varieties.
		
		\item The divisor $\widetilde{\mathcal{E}}:=\pi^{-1}(\mathcal{E}_0)$ is  SNC on $\widetilde Y$, and for all $\a \in I$, $\pi^{-1}(E_{\a})\subset\widetilde{\mathcal{E}}$.		
	\end{enumerate}
\end{Theorem}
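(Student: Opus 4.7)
The plan is to construct the commutative diagram inductively, by lifting the embedded resolution sequence $\pi$ of $\widetilde{\mathcal{E}}_0$ step by step to the top row via strong principalization, and then verifying that each intermediate morphism remains locally toroidal via Lemma \ref{tor after principalization}. First, I would apply Theorem \ref{ERS} to the hypersurface $\widetilde{\mathcal{E}}_0 = \sum_{\alpha \in I} \overline{E}_\alpha \subset Y$, obtaining a sequence of monoidal transforms
$$\pi: \widetilde Y = Y_{n_0} \xrightarrow{\pi_{n_0}} Y_{n_0-1} \to \cdots \to Y_1 \xrightarrow{\pi_1} Y_0 = Y$$
centered in the maximum order loci, such that $\widetilde{\mathcal{E}} := \pi^{-1}(\widetilde{\mathcal{E}}_0)$ is SNC on $\widetilde Y$. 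Each center $Z_k$ of $\pi_{k+1}$ has codimension at least $2$ in $Y_k$ (since it lies in the singular locus of the weak transform of $\widetilde{\mathcal{E}}_0$, where the order is $\geqslant 2$), and by Theorem \ref{ERS}(\ref{ERS1}) each restriction $Z_{k,\alpha} = Z_k \cap V_{k,\alpha}$ makes SNCs with $E_{k,\alpha}$ on $V_{k,\alpha}$ for every $\alpha \in I$.

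The inductive lifting step proceeds as follows. Set $\phi_0 := \varphi$ and $\lambda_0 := {\rm id}_X$. Assume that $\phi_k:(X_k, U_{k,\alpha}, D_{k,\alpha})_{\alpha \in I} \to (Y_k, V_{k,\alpha}, E_{k,\alpha})_{\alpha \in I}$ has been built as a locally toroidal morphism commuting with the previous $\pi_i$ and $\lambda_i$. The rational map $\pi_{k+1}^{-1} \circ \phi_k: X_k \dashrightarrow Y_{k+1}$ has indeterminacy locus equal to the nonprincipal locus $W_{Z_k}(X_k)$, which by Proposition \ref{NP Locus is SNC} is a SNC subscheme of $X_k$ whose irreducible components make SNCs with $D_{k,\alpha}$ on each patch. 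Apply the strong principalization algorithm of \cite[Theorem 6.35]{C2} to the ideal sheaf $\mathcal{N}_k$ defined by $\mathcal{I}_{Z_k}\mathcal{O}_{X_k} = \mathcal{O}_{X_k}(-F_k)\mathcal{N}_k$, obtaining a global sequence $\lambda_{k+1}: X_{k+1} \to X_k$ of monoidal transforms that makes $\mathcal{I}_{Z_k}\mathcal{O}_{X_{k+1}}$ locally principal. This resolves the indeterminacy and produces a regular morphism $\phi_{k+1}: X_{k+1} \to Y_{k+1}$ with $\pi_{k+1} \circ \phi_{k+1} = \phi_k \circ \lambda_{k+1}$.

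To verify that $\phi_{k+1}$ is locally toroidal with respect to the transformed data $U_{k+1,\alpha} := \lambda_{k+1}^{-1}(U_{k,\alpha})$, $D_{k+1,\alpha} := \lambda_{k+1}^*(D_{k,\alpha})_{\rm red}$, $V_{k+1,\alpha} := \pi_{k+1}^{-1}(V_{k,\alpha})$, $E_{k+1,\alpha} := \pi_{k+1}^*(E_{k,\alpha})_{\rm red}$, I restrict the construction to each patch. The restriction $\pi_{k+1}|_{V_{k+1,\alpha}}: V_{k+1,\alpha} \to V_{k,\alpha}$ is the blowup of $Z_{k,\alpha}$, and $\lambda_{k+1}|_{U_{k+1,\alpha}}: U_{k+1,\alpha} \to U_{k,\alpha}$ is a strong principalization of $W_{Z_{k,\alpha}}(U_{k,\alpha})$. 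Since the hypotheses of Lemma \ref{tor after principalization} are met (with $c = \mathrm{codim}_{V_{k,\alpha}} Z_{k,\alpha} \geqslant 2$), it follows that $D_{k+1,\alpha}$ and $E_{k+1,\alpha}$ are SNC divisors and $\phi_{k+1}|_{U_{k+1,\alpha}}$ is toroidal with respect to them. Iterating this for $k = 0, 1, \ldots, n_0 - 1$ produces the entire diagram (\ref{multidiagram}). Property (3) is then immediate, since $\overline{E}_\alpha \subseteq \widetilde{\mathcal{E}}_0$ forces $\pi^{-1}(E_\alpha) \subseteq \pi^{-1}(\widetilde{\mathcal{E}}_0) = \widetilde{\mathcal{E}}$.

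The main obstacle is checking that the strong principalization $\lambda_{k+1}$, constructed globally on $X_k$, restricts on each $U_{k,\alpha}$ to precisely a strong principalization of $W_{Z_{k,\alpha}}(U_{k,\alpha})$ in the sense required by Lemma \ref{tor after principalization}. This rests on the compatibility of the algorithm of \cite{BEV,EV} with open immersions: the blowup centers are selected via the maximum order loci of auxiliary upper semi-continuous invariants computed purely locally, so the algorithm commutes with restriction to open subsets. Once this compatibility is in place, patching of the locally constructed toroidal morphisms is automatic, since the various $\phi_{k+1}|_{U_{k+1,\alpha}}$ are all restrictions of the single globally defined regular morphism $\pi_{k+1}^{-1} \circ \phi_k \circ \lambda_{k+1}$.
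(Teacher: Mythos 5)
Your proposal follows essentially the same route as the paper's own proof: apply Theorem \ref{ERS} to $\widetilde{\mathcal E}_0$, lift each blowup $\pi_{k+1}$ to the domain by strong principalization of the ideal sheaf $\mathcal N_k$ of the nonprincipal locus $W_{Z_k}(X_k)$, verify that the lifted morphism is locally toroidal patch-by-patch via Lemma \ref{tor after principalization}, and patch. The student's write-up is if anything slightly more careful than the paper's on two points the paper leaves implicit: the check that the ERS centers have codimension at least $2$ (required by Lemma \ref{tor after principalization}), and the compatibility of the algorithmic strong principalization with restriction to open subsets (the paper writes ``Clearly, (\ref{SPS1}) induces a SPS of $W_{Z_{k,\a}}(U_{k,\a})$'', whereas you correctly identify this as the point that requires the locality/functoriality of the BEV algorithm). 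The observation for property (3) is the same as the paper's, just stated more directly via $\overline E_\alpha\subseteq\widetilde{\mathcal E}_0$ rather than through Theorem \ref{ERS}(2).
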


\begin{proof}
	We will construct the diagram (\ref{multidiagram}) inductively using embedded resolution of singularities and strong principalization of ideal sheaves. The required properties follows from Theorem \ref{ERS} and Lemma \ref{tor after principalization}. Let
	\begin{equation}\label{ERS'}
	\pi:\widetilde Y = Y_{n_0}\xrightarrow{\pi_{n_0}} Y_{n_0-1}\xrightarrow{}\cdots\xrightarrow{}Y_k\xrightarrow{\pi_k}Y_{k-1}\xrightarrow{}\cdots\to Y_1\xrightarrow{\pi_1}Y_0=Y
	\end{equation}
	be the ERS of $\mathcal E_0$ so that $\widetilde{\mathcal{E}}:=\pi^{-1}(\mathcal{E}_0)$ is a SNC divisor on $\widetilde Y$. By Theorem \ref{ERS}, the ERS sequence (\ref{ERS'}) has the following properties:
	\begin{enumerate}
		\item\label{ERS1} For all $\a\in I$ and $0\leqslant k\leqslant n_0$, $E_{k,\alpha}$ is a SNC divisor on $V_{k,\alpha}$, and $Z_{k,\alpha}:=Z_k\cap V_{k,\a}$ makes SNCs with $E_{k,\alpha}$ on $V_{k,\alpha}$, where $Z_k$ denotes the center of $\pi_{k+1}:Y_{k+1}\to Y_k$.\footnote{It is possible that $Z_{k,\alpha}\cap E_{k,\alpha}\ne \emptyset$ but
			$Z_{k,\alpha}\not\subset E_{k,\alpha}$.}
		\item\label{ERS2} 
		For all $0\leqslant k\leqslant n_0$, $(\sum_{\alpha\in I}\overline E_{k,\alpha})_{\rm red}\subseteq\Pi_k^{-1}(\widetilde{\mathcal E}_0)$ which implies that for all $\a\in I$, $E_{k,\a}\subset\Pi_k^{-1}(\widetilde{\mathcal E}_0)$. In particular, for all $\a \in I$, we have $\pi^{-1}(E_{\a})\subset\widetilde{\mathcal{E}}$.	 
	\end{enumerate}
	Suppose that we have constructed the commutative diagram 
	\begin{equation}
	\xymatrix{
		X_{k}\ar[r]\ar[d]_{\phi_{k}} & \cdots\ar[r] & X_1\ar[r]^{\lambda_1}\ar[d]^{\phi_1} & X_0\ar[d]^{\phi_0} \\
		Y_{k}\ar[r] & \cdots\ar[r] & Y_1\ar[r]_{\pi_1} & Y_0}
	\end{equation}
	for which the conclusions of the theorem hold. After the blowup $\pi_{k+1}:Y_{k+1}\to Y_k$, we obtain the rational map $\pi_{k+1}^{-1}\circ\phi_k:X_k \dashrightarrow Y_{k+1}$. To resolve the indeterminacy of $\pi_{k+1}^{-1}\circ\phi_k$, which is $W_{Z_k}(X_k)=\{p\in X_k\ |\ \mathcal I_{Z_k}\mathcal{O}_{X_k,p} \mbox{\ \ is not principal}\}$, we let 
	\begin{equation}\label{SPS1}
	\lambda_{k+1}: X_{k+1}=X_k^{n_1}\xrightarrow{}X_k^{n_1-1}\to\cdots\to X_k^i\xrightarrow{\lambda_{k+1}^i}X_k^{i-1}\to\cdots\to X_k^1\xrightarrow{}X^0_k=X_k
	\end{equation}
	be a SPS of the ideal sheaf $\mathcal N_k$ of $W_{Z_k}(X_k)$. Clearly, (\ref{SPS1}) induces a SPS 
	$$
	\lambda_{k+1,\a}:=\lambda_{k+1}|_{U_{k+1,\a}}:U_{k+1,\a}=U^{n_1}_{k+1,\a}\to U^{n_1-1}_{k+1,\a}\to \cdots \to U_{k,\a}^1\to U^0_{k,\a}=U_{k,\a}
	$$
	of $W_{Z_{k,\a}}(U_{k,\a})=W_{Z_k}(X_k)\cap U_{k,\a}$ for each $\a\in I$.
	%
	%
	%
	%
	%
	We also note that by the induction hypothesis, $\phi_k:(X_k,U_{k,\a},D_{k,\a})_{\a\in I}\to (Y_k,V_{k,\a},E_{k,\a})_{\a\in I}$ is a locally toroidal morphism, i.e., for all $\a$, $D_{k,\a}\subset U_{k,\a}$ and $E_{k,\a}\subset V_{k,\a}$ are SNC divisors, and $$
	\phi_{k,\a}:=\phi_k|_{U_{k,\a}}:(U_{k,\a},D_{k,\a})\to (V_{k,\a},E_{k,\a})
	$$
	is toroidal. In addition, (\ref{ERS1}) tells us that $Z_{k,\a}$ makes SNCs with $E_{k,\a}$ for all $\a\in I$. Applying Lemma \ref{tor after principalization} to each $\a\in I$, we see that $D_{k+1,\a}\subset U_{k+1,\a}$ and $E_{k+1,\a}\subset V_{k+1,\a}$ are SNC divisors, for all $\a\in I$, and
	$$
	\phi_{k+1,\a}:=\pi_{k+1,\a}^{-1}\circ\phi_{k,\a}\circ\lambda_{k+1,\a}:(U_{k+1,\a},D_{k+1,\a})\to (V_{k+1,\a},E_{k+1,\a})
	$$
	is toroidal. The morphism $\phi_{k+1}:X_{k+1}\to Y_{k+1}$ defined by $\phi_{k+1}|_{U_{k+1,\a}}:=\phi_{k+1,\a}$ is obviously well-defined on the intersections of every pair of sets in the open cover $\{U_{k+1,\a}\}_{\a\in I}$, and 
	$$
	\phi_{k+1}:(X_{k+1},U_{k+1,\a},D_{k+1,\a})_{\a\in I}\to (Y_{k+1},V_{k+1,\a},E_{k+1,\a})_{\a\in I}
	$$
	is locally toroidal by construction. Repeating this process results in the diagram (\ref{multidiagram}) for which the conclusions of the theorem hold.
\end{proof}

\begin{proof}[Proof of Theorem \ref{Main Theorem}]
	Let $\mathcal{E}_0=\sum_{\alpha \in I}\overline{E}_{\alpha}$, where $\overline{E}_{\alpha}$ is the Zariski closure of $E_{\alpha}$ in $Y$, and suppose that
	$$\begin{CD}
	\widetilde{X}@>\widetilde\varphi>>\widetilde{Y}\\
	@V\lambda VV @VV\pi V\\ X@>>\varphi>Y
	\end{CD}$$
	is the commutative diagram constructed in Theorem \ref{Locally Toroidalization}. For $\a\in I$, let $\widetilde U_{\a}=\lambda^{-1}(U_{\a})$, $\widetilde D_{\a}=\lambda^{-1}(D_{\a})$, and  $\widetilde V_{\a}=\pi^{-1}(V_{\a})$, $\widetilde E_{\a}=\pi^{-1}(E_{\a})$. We will prove that the locally toroidal morphism $$\widetilde\varphi:(\widetilde X,\widetilde U_{\a},\widetilde D_{\a})_{\a\in I} \to (\widetilde Y,\widetilde V_{\a},\widetilde E_{\a})_{\a\in I}$$ is in fact toroidal with respect to $\widetilde{E}=\pi^{-1}(\mathcal{E}_0)$ and $\widetilde{D}=\widetilde\varphi^{-1}(\widetilde{E})$.
	
	First, we simply observe that ${\rm Sing}\,\widetilde\varphi\subseteq \widetilde D$. If $p\in \widetilde X\setminus\widetilde D$, then $q:=\widetilde\varphi(p)\in \widetilde Y\setminus\widetilde E$, and hence $q\notin \widetilde E_{\a}$ for all $\a\in I$, since $\widetilde E_{\a}\subset\widetilde E$ for all $\a\in I$, by Theorem \ref{ERS}. Thus for all $\a\in I$, $p\notin\widetilde\varphi^{-1}(\widetilde E_{\a})$, and in particular, $p\notin\widetilde D_{\a}=\widetilde\varphi|_{\widetilde U_{\a}}^{-1}(\widetilde E_{\a})$ which implies that $\widetilde\varphi$ is smooth at $p$, since $\widetilde\varphi|_{U_{\a}}:(\widetilde U_{\a},\widetilde D_{\a})\to (\widetilde V_{\a},\widetilde E_{\a})$ is toroidal and ${\rm Sing}\,\widetilde\varphi|_{U_{\a}}\subseteq \widetilde D_{\a}$.
	
	Now suppose that $p\in\widetilde D\cap U_{\a}$ for some $\a\in I$, so that $q=\widetilde\varphi(p)=\widetilde\varphi_{\a}(p)\in\widetilde E$. Assume that $q$ is a $k$--point on $\widetilde E_{\a}$, and it is an $\l$--point on $\widetilde E$. We note that $\l>0$ and $0\leqslant k\leqslant \l$. If $k=0$, i.e., $q\notin\widetilde E_{\a}$, then $\widetilde\varphi_{\a}$ is smooth at $p$, and so is $\widetilde\varphi$. Hence $\widetilde D$ is a SNC divisor at $p$ since $\widetilde E$ is a SNC divisor at $q$. If $k=\l$, then $\mathcal I_{\widetilde E_{\a},q}=\mathcal I_{\widetilde E,q}$ and thus $\mathcal I_{\tilde D_{\a},p}=\mathcal I_{\tilde D,p}$. It is clear in this case that $\widetilde\varphi:(\widetilde X,\widetilde D)\to (\widetilde Y,\widetilde E)$ is toroidal at $p$ because $\widetilde\varphi_{\a}:(\widetilde U_{\a},\widetilde D_{\a})\to (\widetilde V_{\a},\widetilde E_{\a})$ is toroidal at $p$. 
	
	The remaining case to consider is when $q$ is a $k$--point on $\widetilde E_{\a}$, and it is an $\l$--point on $\widetilde E$ and $0<k<\l$. Applying theorem \ref{TF} to $\widetilde\varphi_{\a}$, if $p\in\widetilde\varphi_{\a}^{-1}(q)$ is an $n$--point on $\widetilde D_{\a}$, then we have permissible parameters $y_1,\dots,y_m$ at $q$ on $\widetilde E_{\a}$, and  permissible parameters $x_1,\dots,x_d$ at $p$ on $\widetilde D_{\a}$ such that
	\begin{equation}\label{eq111}
	y_i=\left\{
	\begin{array}{ll}
	\delta_i x_1^{a_{i1}} \cdots  x_n^{a_{in}}, & 1\leqslant i\leqslant k \\
	\ \ \ x_{n-k+i}, & k< i\leqslant m
	\end{array}
	\right.
	\end{equation}
	where  $\delta_i$, $1\leqslant i\leqslant k$, are unit series in which the variables $x_1,\dots,x_{n+(m-k)}$ do not appear, and $(a_{ij})_{\substack{1\leqslant i\leqslant k \\ 1\leqslant j\leqslant n}}\in\mathbb N^{k\times n}$ satisfies:
	\begin{equation}\label{ex matrix condition?}
	\mbox{for all }j\in [n], \sum_{i=1}^{k}a_{ij}> 0,\mbox{ and for all }i\in[k], \sum_{j=1}^{n}a_{ij}> 0.
	\end{equation}
	We may if necessary change the parameters $y_{k+1},\dots,y_m$, and we let $\tilde y_i=y_i$ for $i\in[k]$, to obtain permissible parameters $\mathbf{\tilde y}=(\tilde y_1,\dots,\tilde y_m)$ at $p$ on $\widetilde E$, which satisfies $\det J(\mathbf{\tilde y};\mathbf{y})\neq 0$ by the formal inverse function theorem, and $\tilde y_1\cdots\tilde y_{\l}=0$ is a local equation of $\widetilde E$ at $q$. Writing $N:=n+\l-k$, we also set 
	$$
	\tilde x_j:=\begin{cases}
	x_i & \mbox{ for }1\leqslant j\leqslant n\\
	\tilde y_{k-n+j} & \mbox{ for }n< j\leqslant n+m-k\\
	x_j & \mbox{ for }n+m-k< j\leqslant d
	\end{cases}
	\mbox{ and } \mathbf{b}=(b_{ij})_{\substack{1\leqslant i\leqslant \l \\ 1\leqslant j\leqslant N}}:= \left(
	\begin{array}{c|c}
	(a_{ij})_{\substack{1\leqslant i\leqslant k \\ 1\leqslant j\leqslant n}} & \mathrm{O}_{k\times \l-k} \\
	\hdotsfor[.5]{2}\\
	\mathrm{O}_{\l-k\times n} & I_{\l-k}
	\end{array}
	\right),   
	$$
	and we obtain 
	\begin{equation}
	\tilde y_i=\left\{
	\begin{array}{ll}
	\delta_i \tilde x_1^{b_{i1}} \cdots  \tilde x_N^{b_{iN}}, & 1\leqslant i\leqslant \l \\
	\ \ \ \tilde x_{N-\l+i}, & \l< i\leqslant m.
	\end{array}
	\right.
	\end{equation}
	We note that $p$ is an $N$--point on $\widetilde D=\widetilde\varphi^{-1}(\widetilde E)$ and $\tilde x_1\cdots\tilde x_N=0$ is a local equation of $\widetilde D$ at $p$. It is also clear from (\ref{ex matrix condition?}) that $\mathbf{b}$ has the property that for all $j\in [N]$, $\sum_{i=1}^{\l}b_{ij}> 0$, and for all $i\in[\l]$, $\sum_{j=1}^{N}b_{ij}> 0$. Hence, to complete the proof, we only need to show that $\det J(\mathbf{\tilde x};\mathbf{x})\neq 0$ which shows that $\mathbf{\tilde x}=(\tilde x_1,\dots,\tilde x_d)$ are permissible parameters at $p$ on $\widetilde D$. 
	
	Suppose that, for $k+1\leqslant i\leqslant m$, 
	\begin{equation}\label{expansion of tilde y?}
	\tilde y_i=\sum_{g=1}^m \gamma_{ig}y_g + \mbox{ higher degree terms in }y_g\mbox{'s}
	\end{equation}
	for some $\gamma_{ig}\in\mathcal K$. After substituting (\ref{eq111}) in (\ref{expansion of tilde y?}), for $k+1\leqslant i\leqslant m$, we obtain
	\begin{equation}\label{eq112}
	\tilde x_{n-k+i} =\tilde y_i=\sum_{g=1}^k \gamma_{ig}\delta_g x_1^{a_{g1}} \cdots  x_n^{a_{gn}} +\sum_{g=k+1}^m \gamma_{ig}x_{n-k+g} + \mbox{ higher degree terms in }x_j\mbox{'s}.
	\end{equation}
	We will observe that the Jacobian matrix $J(\mathbf{\tilde x};\mathbf{x})=$
	$$
	\left(
	\begin{array}{c|c|c|c}
	I_{n} & \mathrm{O} & \mathrm{O}& \mathrm{O} \\
	\hdotsfor[.5]{4}\\
	\left(\frac{\partial\tilde x_{n-k+g}}{\partial x_j}\right)_{\substack{k < g\leqslant m \\ 1\leqslant j\leqslant n}} &
	\left(\frac{\partial\tilde x_{n-k+g}}{\partial x_j}\right)_{\substack{k < g\leqslant m \\ n < j\leqslant n-k+m}} & \left(\frac{\partial\tilde x_{n-k+g}}{\partial x_j}\right)_{\substack{k < g\leqslant m \\ n-k+m < j\leqslant n-r+m}} & \mathrm{O}  \\
	\hdotsfor[.5]{4}\\
	\mathrm{O} & \mathrm{O} & I_{k-r}  & \mathrm{O}\\
	\hdotsfor[.5]{4}\\
	\mathrm{O} & \mathrm{O}  & \mathrm{O} & I_{d-(n+m-r)} \\
	\end{array}
	\right)
	$$
	is full rank where $r={\rm rank}\,(a_{ij})_{\substack{1\leqslant i\leqslant k \\ 1\leqslant j\leqslant n}}$. An easy computation using (\ref{eq112}) shows that 
	$$
	\left(\frac{\partial\tilde x_{n-k+g}}{\partial x_j}\right)_{\substack{k < g \leqslant m \\ n < j\leqslant n-k+m}}=(\gamma_{ig})_{\substack{k < i\leqslant m \\ k < g\leqslant m}},
	$$
	and thus after performing elementary row operations we obtain
	$$
	J(\mathbf{\tilde x};\mathbf{x})\leftrightarrow\left(
	\begin{array}{c|c|c}
	I_{n} & \mathrm{O} & \mathrm{O} \\
	\hdotsfor[.5]{3}\\
	\mathrm{O} &(\gamma_{ig})_{\substack{k < i \leqslant m \\k < g \leqslant m}} & \mathrm{O} \\
	\hdotsfor[.5]{3}\\
	\mathrm{O} & \mathrm{O} & I_{d-(n+m-k)} \\
	\end{array}
	\right).
	$$
	But $\mathrm{rank~}(\gamma_{ig})_{\substack{k < i\leqslant m \\ k < g\leqslant m}}=m-k$ since
	\begin{equation*}
	\det J(\mathbf{\tilde y};\mathbf{y}) = \det \left(
	\begin{array}{c|c}
	I_{k} & \mathrm{O}_{k \times (m-k)} \\
	\hdotsfor[.5]{2}\\
	(\gamma_{ig})_{\substack{k < i\leqslant m \\1\leqslant g\leqslant k}} & (\gamma_{ig})_{\substack{ k < i\leqslant m \\k < g \leqslant m}}
	\end{array}
	\right)  = \det \left(
	\begin{array}{c|c}
	I_{k} & \mathrm{O}_{k\times (m-k)} \\
	\hdotsfor[.5]{2}\\
	\mathrm{O}_{(m-k)\times k} & (\gamma_{ig})_{\substack{ k < i\leqslant m \\k < g\leqslant m}}
	\end{array}
	\right)\neq 0.
	\end{equation*}
	Therefore ${\rm rank}\,J(\mathbf{\tilde x};\mathbf{x})=d$ and $\mathbf{\tilde x}$ is a permissible system of parameters at $p$ on $\widetilde D$.
\end{proof}

\section*{Acknowledgements}
I am deeply grateful to Professor Steven Dale Cutkosky for his generous guidance on gaining the knowledge of the subject and this result. Part of this work was completed in the helpful environment of Department of Mathematics, the University of Missouri which was supported by the Institute for Research in Fundamental Sciences of Iran. I am thankful to both institutions.


\begin{thebibliography}{Dillo 83}
\bibitem [ADK]{ADK} Abramovich, D., Denef, J., Karu, K., \textit{Weak toroidalization over non-closed fields}, Manuscripta Math. \textbf{142} (2013), no. 1-2, 257--271.

\bibitem[AKMW]{AKMW} Abramovich, D., Karu, K., Matsuki, K.\ and Wlodarczyk, J., \textit{Torification and factorization of birational maps}, JAMS
\textbf{15} (2002), 531 -- 572.

\bibitem[AK]{AK} Abramovich D., Karu K., \textit{Weak semistable reduction in characteristic 0}, Invent. Math. \textbf{139} (2000), 241--273.

\bibitem[A1]{A1} Ahmadian, R., \textit{Toroidalization of locally toroidal morphisms from n-folds to 3-folds}, J. Pure Appl. Algebra (2020), DOI: 10.1016/j.jpaa.2020.106406


\bibitem[A2]{A2} Ahmadian, R., \textit{A principalization algorithm for locally monomial ideal sheaves on 3-folds with an application to toroidalization},
J. Algebra \textbf{454} (2016), 139 -- 180.

\bibitem[A3]{A3} Ahmadian, R., \textit{Toroidalization of locally toroidal morphisms of 3-folds},
Bull. Iranian Math. Soc. \textbf{42} (2016), no. 2, 371 -- 405.

\bibitem[BM]{BM} Bierstone, E.\ and Milman, P., \textit{Canonical desingularization in characteristic zero by blowing up the maximal strata of a local invariant}, Inv.\ Math \textbf{128} (1997).

\bibitem[BEV]{BEV} Bravo, A., Encinas, S.\ and Villamayor, O., \textit{A simplified proof of desingularization and applications}, Rev.\ Mat.\ IberoAmericana \textbf{21} (2005), 349 -- 458.

\bibitem[BH]{BH} Bruns, W.\ and Herzog, J., \textit{Cohen-Macaulay Rings}, Cambridge University Press, 1993.

\bibitem[C1]{C1} Cutkosky, S.D., \textit{Local monomialization and factorization of morphisms}, Ast\'{e}risque \textbf{260} (1999).

\bibitem[C2]{C} Cutkosky, S.D., \textit{Monomialization of morphisms from 3-folds to surfaces}, LNM \textbf{1786}, Springer-Verlag (2002).

\bibitem[C3]{C2} Cutkosky, S.D., \textit{Resolution of Singularities}, AMS (2004).

\bibitem[C4]{C3} Cutkosky, S.D., \textit{Toroidalization of dominant morphisms of 3-folds}, Mem. Amer. Math. Soc., vol. 190, no. \textbf{890} (2007).


\bibitem[C5]{C5} Cutkosky, S.D., \textit{A simpler proof of toroidalization of morphisms from 3-folds to surfaces}, Ann.\ Inst.\ Fourier \textbf{63} (2013), no.\ 3,  865 -- 922.

\bibitem[C6]{C4} Cutkosky, S.D., \textit{Introduction to algebraic geometry}, Graduate Studies in Mathematics. \textbf{188}, American Mathematical Society, Providence, RI, 2018. 484 pp.

\bibitem[CK]{CK} Cutkosky, S.D.\ and Kascheyeva, O., \textit{Monomialization of strongly prepared morphisms from nonsingular n-folds to surfaces}, J. of Algebra \textbf{275} (2004), 275 -- 320.

\bibitem[CLO]{CLO} Cox, D., Little, J.\ and O$'$Shea, D., \textit{Ideals, Varieties, and Algorithms, An Introduction to Computational Algebraic Geometry and Commutative Algebra}, Third ed. UTM, Springer-Verlag (2006).

\bibitem[CLS]{CLS} Cox, David A., Little, John B., Schenck, Henry K. \textit{Toric varieties}, Graduate Studies in Mathematics. \textbf{124}, American Mathematical Society, Providence, RI, 2011. xxiv+841 pp.

\bibitem[CP]{CP} Cutkosky, S.D.\ and Piltant, O., \textit{Monomial resolution of morphisms of algebraic surfaces}, Comm. in Alg. \textbf{28} (2000), 5935 -- 5959.

\bibitem[Ei]{Ei} Eisenbud, D., \textit{Commutative algebra. With a view toward algebraic geometry.} Graduate Texts in Mathematics, 150. Springer-Verlag, New York, 1995.

\bibitem[EH]{EH} Encinas, S.\ and Hauser, H., \textit{Strong resolution of singularities in characteristic zero}, Comment Math.\ Helv. \textbf{77} (2002), 821 -- 845.

\bibitem[EV]{EV} Encinas, S.\ and Villamayor, O., \textit{A new proof of desingularization over fields of characteristic zero},
Proceedings of the international conference on Algebraic Geometry and singularities, Rev. Mat. Iberoamericana \textbf{19} (2003), 339--353.

\bibitem[G]{G} Goward, R., \textit{A simple algorithm for principaization of monomial ideals}, Transactions of the AMS 357 (2005), 4805--4812.

\bibitem[Ha]{Ha} Hanumanthu, K., \textit{Toroidalization of locally toroidal morphisms from n-folds to surfaces}, J. Pure Appl. Algebra \textbf{213} (2009), 349 -- 359.

\bibitem[HH]{HH} Herzog, J., Hibi, T., \textit{Monomial ideals}, Graduate Texts in Mathematics, \textbf{260}, Springer-Verlag London, Ltd., London, 2011.
 
\bibitem[H]{H} Hironaka, H., \textit{Resolution of singularities of an algebraic variety over a field of characteristic zero}, Annals of Math \textbf{79} (1964), 109 -- 326.

\bibitem[Hu]{Hu} Humphreys, James E. \textit{Linear algebraic groups}, Graduate Texts in Mathematics. No. \textbf{21}, Springer-Verlag, New York-Heidelberg, 1975. xiv+247 pp.

\bibitem[KKMS]{KKMS} Kempf, G., Knudsen, F., Mumford, D. and Saint-Donat, B. \textit{Toroidal embeddings I}, LNM \textbf{339} Speringer-Verlag (1973).

\bibitem[Ok]{Ok} Okni\'{n}ski, Jan Semigroup algebras. Monographs and Textbooks in Pure and Applied Mathematics, 138. Marcel Dekker, Inc., New York, 1991. x+357 pp.

\bibitem[T]{T} Teissier, B., \textit{Valuations, deformations and toric geometry}, Valuation theory and its applications II, F.V.\ Kuhlmann, S.\ Kuhlmann and M.\ Marshall, editors, Fields Institute Communications \textbf{33}, Amer.\ Math.\ Soc., Providence, RI, 361 -- 459.

\bibitem[V]{V}  Villamayor, O., \textit{Constructiveness of Hironaka?s resolution}, Ann. Sci. Ecole Norm. Sup. \textbf{22} (1989), 1--32.

\bibitem[W]{W} Wlodarczyk, J., \textit{Simple Hironaka resolution in characteristic zero}, J. Amer. Math. Soc. \textbf{18} (2005), 779--822.

\bibitem[Z]{Z} Zariski, O., \textit{Local uniformization of algebraic varieties}, Annals of Math. \textbf{41} (1940), 852 -- 896.

\end{thebibliography}
\end{document}